\theoremstyle{plain}
\newtheorem{theorem}{Theorem}[section]
\newtheorem{lemma}[theorem]{Lemma}
\newtheorem{corollary}[theorem]{Corollary}
\newtheorem{proposition}[theorem]{Proposition}
\theoremstyle{remark}
\newtheorem{definition}[theorem]{Definition}
\newtheorem{example}[theorem]{Example}
\newcommand{\tripnorm}{\vert\kern-0.25ex\vert\kern-0.25ex\vert}
\newcommand{\Bigtripnorm}{\Big\vert\kern-0.2ex\Big\vert\kern-0.2ex\Big\vert}
\newcommand{\nat}{\mathbb N}
\newcommand{\real}{\mathbb R}
\newcommand{\complex}{\mathbb C}
\newcommand{\asplundop}{\ensuremath{\mathscr{D}}}
\newcommand{\sepop}{\ensuremath{\mathscr{X}}}
\newcommand{\allop}{\ensuremath{\mathscr{L}}}
\newcommand{\opideal}{\ensuremath{\mathscr{I}}}
\providecommand{\spn}{\mathop{\rm span}}
\providecommand{\MIN}{\mathop{\rm MIN}}
\newcommand{\cee}{\mathcal{C}}
\newcommand{\dee}{\mathcal{D}}
\newcommand{\eff}{\mathcal{F}}
\newcommand{\ess}{\mathcal{S}}
\newcommand{\tee}{\mathcal{T}}
\newcommand{\yoo}{\mathcal{U}}
\newcommand{\veee}{\mathcal{V}}
\newcommand{\dubyoo}{\mathcal{W}}
\numberwithin{theorem}{section}
\numberwithin{equation}{section}
\begin{document}
\title{Non-Asplund Banach spaces and operators}
\author{Philip A.H. Brooker}

\maketitle

\begin{abstract} Let $W$ and $Z$ be Banach spaces such that $Z$ is separable and let $R:W\longrightarrow Z$ be a (continuous, linear) operator. We study consequences of the adjoint operator $R^\ast$ having non-separable range. From our main technical result we obtain applications to the theory of basic sequences and the existence of universal operators for various classes of operators between Banach spaces. We also obtain an operator-theoretic characterisation of separable Banach spaces with non-separable dual.
\end{abstract}


\section{Introduction}\label{squintro}
The 1970s saw significant progress made on the understanding of the Radon-Nikod{\'y}m property in Banach spaces. Amongst the main achievements in this area of investigation was the proof that a dual Banach space $X^\ast$ has the Radon-Nikod{\'y}m property if and only if every separable subspace of $X$ has separable dual. We refer the reader to the book \cite{Diestel1977} for references and a proof of this result, but mention here in particular that it was C. Stegall who in \cite{Stegall1975} provided the proof of the fact that if $X^\ast$ has the Radon-Nikod{\'y}m property then every separable subspace of $X$ has separable dual. The main technical result in Stegall's paper, Corollary~1 of \cite{Stegall1975}, provides structural consequences for a separable Banach space $X$ and its dual $X^\ast$ in the case that $X^\ast$ is non-separable. The main technical result of the current paper, Theorem~\ref{scoopie} below, serves a similar purpose in the study of separable Banach spaces with non-separable dual. Our approach, which in some ways is fundamentally similar to Stegall's, uses techniques developed recently in \cite{PAHBSzlenkLarge} to study the Szlenk index. We obtain new proofs of old results and a number of new results concerning Banach spaces and operators with non-separable dual.

One of the primary applications of Theorem~\ref{scoopie} of the current paper is to the theory of basic sequences in Banach spaces. Our work to this end is based on the classical method of Mazur for producing subspaces with a basis and the more recent method of Johnson and Rosenthal \cite{Johnson1972} for producing quotients with a basis. 

The other main application of our techniques is to the problem of finding universal elements for certain subclasses of the class $\allop$ of all (bounded, linear) operators between Banach spaces. For operators $T\in\allop(X,Y)$ and $S\in\allop(W,Z)$, where $W,X,Y$ and $Z$ are Banach spaces, we say that $S$ {\em factors through} $T$ (or, equivalently, that $T$ {\em factors} $S$) if there exist $U\in \allop(W,X)$ and $V\in\allop(Y,Z)$ such that $VTU=S$. With this terminology, for a given subclass $\mathscr{C}$ of $\allop$ we say that an operator $\Upsilon\in\mathscr{C}$ is {\em universal for} $\mathscr{C}$ if $\Upsilon$ factors through every element of $\mathscr{C}$. Typically $\mathscr{C}$ will be the complement $\complement\opideal$ of an operator ideal $\opideal$ in the sense of Pietsch \cite{Pietsch1980} (that is, $\complement\opideal$ consists of all elements of $\allop$ that do not belong to $\opideal$), or perhaps the restriction $\mathscr{J}\cap \complement\opideal$ of $\complement\opideal$ to a large subclass $\mathscr{J}$ of $\allop$; e.g., $\mathscr{J}$ might denote a large operator ideal or the class of all operators having a specified domain or codomain. One may think of a universal element of the class $\mathscr{C}$ as a minimal element of $\mathscr{C}$ that is `fixed' or `preserved' by each element of $\mathscr{C}$. 

The notion of universality for a class of operators goes back to the work of Lindenstrauss and {Pe{\l}czy{\'n}ski}, who showed in \cite{Lindenstrauss1968} that the summation operator $\Sigma: (a_n)_{n=1}^\infty \mapsto (\sum_{i=1}^n a_i)_{n=1}^\infty$ from $\ell_1$ to $\ell_\infty$ is universal for the class of non-weakly compact operators. Soon after, W.B.~Johnson \cite{Johnson1971a} showed that the formal identity operator from $\ell_1$ to $\ell_\infty$ is universal for the class of non-compact operators. This result of Johnson has been applied in the study of information-based complexity by Hinrichs, Novak and Wo{\'z}niakowski in \cite{Hinrichs2013}.

The universality result of primary importance for us is due to Stegall and establishes the existence of a universal non-Asplund operator. The Asplund operators have several equivalent definitions in the literature; in the current paper we say that an operator $T:X\longrightarrow Y$ is {\it Asplund} if $T\vert_Z\in\sepop^\ast$ for any separable subspace $Z\subseteq X$, where $\sepop^\ast$ denotes the class of operators whose adjoint has separable range. We refer the reader to Stegall's paper \cite{Stegall1981} for further properties and characterisations of Asplund operators.

Stegall's universal non-Asplund operator is defined in terms of the Haar system $(h_m)_{m=0}^\infty\subseteq C(\{ 0,1\}^\omega)$, where each factor $\{ 0,1\}$ is discrete and $\{0,1\}^\omega$ is equipped with its compact Hausdorff product topology. The Haar system is a monotone basis for $C(\{0,1\}^\omega)$ and may be defined as follows. Let $\dee$ denote the set $\bigcup_{n<\omega}\{0,1\}^n$, the set of finite sequences in $\{0,1\}$, and let $(t_m)_{m=1}^\infty$ be the enumeration of $\dee$ such that the following two conditions hold for all $n<\omega$: 
\begin{itemize}
\item[(i)] $\{0,1\}^n=\{t_m\mid 2^n\leq m<2^{n+1} \}$; and
\item[(ii)] For $2^n\leq l<m<2^{n+1}$ there exists $k<n$ such that $t_l(j)=t_m(j)$, $0\leq j<k$, and $t_l(k)=0$ and $t_m(k)=1$.
\end{itemize}
For each $t\in\dee$ let $\Delta_t$ denote the set of all elements of $\{0,1\}^\omega$ having $t$ as an initial segment (that is, for $t\in\{0,1\}^n$ and $s\in\dee$ we have $s\in\Delta_t$ if and only if  $s(k)=t(k)$ for all $k<n$). For $t\in\{0,1\}^n$ and $i\in\{0,1\}$ we let $t^\smallfrown i$ denote the element of $\{ 0,1\}^{n+1}$ satisfying $(t^\smallfrown i)(k)=t(k)$ for $k<n$ and $(t^\smallfrown i)(n)=i$. The Haar system is defined by setting $h_0=\chi_{\{0,1\}^\omega}$ and $h_m=\chi_{\Delta_{{t_m}^\smallfrown 0}}-\chi_{\Delta_{{t_m}^\smallfrown 1}}$ for $m\geq 1$. Let $\mu$ denote the product measure on $\{0,1\}^\omega$ obtained by equipping each factor $\{ 0,1\}$ with its discrete uniform probability measure and let $H:\ell_1\longrightarrow L_\infty(\{0,1\}^\omega,\mu)$ be defined by setting $Hx=\sum_{m=1}^\infty x(m)h_{m-1}$ for each $x=(x(m))_{m=1}^\infty\in \ell_1$. The following result is due to Stegall.

\begin{theorem}[Theorem 4 of \cite{Stegall1975}]\label{thewox}
Let $X$ and $Y$ be Banach spaces such that $X$ is separable and suppose $T:X\longrightarrow Y$ is an operator such that $T^\ast(Y^\ast)$ is nonseparable. Then there exist operators $U:\ell_1\longrightarrow X$ and $V:Y\longrightarrow L_\infty(\{0,1\}^\omega,\mu)$ such that $VTU=H$.
\end{theorem}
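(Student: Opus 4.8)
The plan is to reduce the conclusion, by soft arguments, to the construction of a suitably spread‑out dyadic tree $(x_t)_{t\in\dee}$ in the closed unit ball $B_X$ of $X$ together with a ``dual'' tree in $Y^\ast$; all the real content, and the only use of the hypothesis on $T^\ast(Y^\ast)$, lives in that construction. For $s,t\in\dee$ write $t\preceq s$ to mean that $t$ is an initial segment of $s$, so that $\chi_{\Delta_t}(\beta)=1$ if $t\preceq\beta$ and $\chi_{\Delta_t}(\beta)=0$ otherwise, for $\beta\in\{0,1\}^\omega$.

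\emph{The reduction.} It suffices to produce a constant $C\geq 1$, points $(x_t)_{t\in\dee}\subseteq B_X$, and functionals $(\phi_t)_{t\in\dee}\subseteq CB_{Y^\ast}$ (where $B_{Y^\ast}$ is the closed unit ball of $Y^\ast$) such that $\langle\phi_s,Tx_t\rangle=1$ when $t\preceq s$ and $\langle\phi_s,Tx_t\rangle=0$ otherwise. Given such data, fix $\beta\in\{0,1\}^\omega$ and let $\phi_\beta$ be a weak$^\ast$-cluster point of the sequence $(\phi_{\beta|_n})_{n\geq 0}$ (which exists since $CB_{Y^\ast}$ is weak$^\ast$-compact); because for each fixed $t$ the scalar sequence $(\langle\phi_{\beta|_n},Tx_t\rangle)_n$ is eventually equal to $\chi_{\Delta_t}(\beta)$, we get $\langle\phi_\beta,Tx_t\rangle=\chi_{\Delta_t}(\beta)$ for every $t\in\dee$. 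Hence, for finitely supported scalars $(a_t)$ and $z:=\sum_t a_t x_t\in X$,
\[ \Bigl\|\sum_t a_t\,\chi_{\Delta_t}\Bigr\|_\infty \;\leq\; \sup_{\beta\in\{0,1\}^\omega}\bigl|\langle\phi_\beta,Tz\rangle\bigr| \;\leq\; C\,\|Tz\|, \]
so $Tx_t\mapsto\chi_{\Delta_t}$ extends to an operator of norm at most $C$ from $\overline{\spn}\{Tx_t:t\in\dee\}$ into $L_\infty(\{0,1\}^\omega,\mu)$, and then --- since $L_\infty(\{0,1\}^\omega,\mu)$ is $1$-injective --- to an operator $V:Y\longrightarrow L_\infty(\{0,1\}^\omega,\mu)$ with $VTx_t=\chi_{\Delta_t}$ for all $t$. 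Defining $U:\ell_1\longrightarrow X$ on the unit vector basis by $Ue_1=x_\emptyset$ and $Ue_m=x_{{t_{m-1}}^\smallfrown 0}-x_{{t_{m-1}}^\smallfrown 1}$ for $m\geq 2$ gives a bounded operator (the $x_t$ lie in $B_X$), and since $h_0=\chi_{\Delta_\emptyset}$ and $h_{m-1}=\chi_{\Delta_{{t_{m-1}}^\smallfrown 0}}-\chi_{\Delta_{{t_{m-1}}^\smallfrown 1}}$ for $m\geq 2$, we obtain $VTUe_m=He_m$ for every $m$, i.e.\ $VTU=H$.

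\emph{Building the trees, modulo one estimate.} I would construct $(x_t)$ and $(\phi_t)$ by induction on the levels of $\dee$, alternating between the two families. Having chosen $x_r,\phi_r$ for all $r$ with $|r|<n$, I would first choose, for each $t$ with $|t|=n$, a point $x_t\in B_X$ in the finite-codimensional subspace $\bigcap_{|r|<n}\ker(T^\ast\phi_r)$ --- which forces the correct values $\langle\phi_r,Tx_t\rangle=0$ for $|r|<n$ --- and with $Tx_t$ linearly independent of the finitely many vectors $Tx_{r'}$ selected so far; this is possible because $T$ cannot carry a finite-codimensional subspace of $X$ onto a finite-dimensional space, as otherwise $T^\ast(Y^\ast)$ would be finite-dimensional. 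I would then choose, for each $t$ with $|t|=n$, a functional $\phi_t\in Y^\ast$ with $\langle\phi_t,Tx_{t|_j}\rangle=1$ for $0\leq j\leq n$ and $\langle\phi_t,Tx_r\rangle=0$ for every other $r$ with $|r|\leq n$ (such a $\phi_t$ exists because the relevant $Tx_r$ are linearly independent), the remaining relations $\langle\phi_t,Tx_r\rangle=0$ for $|r|>n$ being automatic since those $x_r$ will be chosen in $\ker(T^\ast\phi_t)$. This scheme succeeds as soon as it can be run with $\sup_t\|\phi_t\|\leq C<\infty$; equivalently, as soon as the $x_t$ can be chosen so that $(Tx_t)_{t\in\dee}$ dominates the indicator system $(\chi_{\Delta_t})_{t\in\dee}$ with a uniform constant. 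Securing this uniform lower estimate is the crux.

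\emph{Where the hypothesis enters, and the main obstacle.} Since $X$ is separable, $K:=T^\ast(B_{Y^\ast})$ is weak$^\ast$-compact and weak$^\ast$-metrizable, and $K$ is non-separable in norm (else $T^\ast(Y^\ast)=\bigcup_k kK$ would be norm-separable, contrary to hypothesis). The hard part is to turn this into the uniform estimate above, and I would do it by a Szlenk-type derivation on $K$. One first produces $\epsilon>0$ such that every countable-ordinal $\epsilon$-Szlenk derivative $s^\alpha_\epsilon(K)$ is nonempty: were some countable-ordinal $\epsilon$-derivative empty for every $\epsilon>0$, then, using that a weak$^\ast$-compact metrizable set is Lindel\"of --- so that each derivation step strips off a set lying within $\epsilon$ of a countable set --- $K$ would be norm-separable. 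From such an $\epsilon$ one extracts, by the ``large Szlenk index yields large trees'' methods of \cite{PAHBSzlenkLarge}, a dyadic tree of functionals in $\epsilon^{-1}B_{Y^\ast}$ whose successive branchings are $\epsilon$-separated in norm \emph{and} whose separation is witnessed, uniformly, by points of $B_X$ annihilating the functionals chosen at lower levels; replacing $X$ and $T$ at each node by the restriction of $T$ to the pertinent finite-codimensional subspace (whose adjoint still has non-separable range, being a quotient of $T^\ast(Y^\ast)$ by a finite-dimensional space) keeps the situation self-similar. Running the inductive selection of the previous step inside such a tree keeps $C$ of order $\epsilon^{-1}$ and guarantees that one never exhausts the room to branch. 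The essential obstacle is precisely this simultaneous coordination --- choosing, at every node of $\dee$, a unit vector of $X$ and a weak$^\ast$-functional of $Y^\ast$ so that all of the exact relations above hold at once with uniformly bounded norms --- and the Szlenk derivation is the device that supplies the needed ``room''. After this, no extra work is required: because $V$ is defined so that $VTx_t=\chi_{\Delta_t}$ exactly, one lands on $H$ on the nose, with no perturbation or Neumann-series correction.
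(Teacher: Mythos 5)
Your soft reduction is sound, and it is essentially the paper's own route: the exact dyadic system $(x_t)_{t\in\dee}\subseteq B_X$, $(\phi_t)_{t\in\dee}\subseteq CB_{Y^\ast}$ with $\langle\phi_s,Tx_t\rangle=\chi_{\{t\preceq s\}}$ is precisely condition (ii) of Proposition~\ref{factorkar} (up to normalisation), the passage from $\{\chi_{\Delta_t}\}$ to the Haar basis via $U_0e_m=e_{{t_{m-1}}^\smallfrown0}-e_{{t_{m-1}}^\smallfrown1}$ is the bookkeeping of Corollary~\ref{trumpet}, and the final extension of $V$ uses the injectivity of $L_\infty(\{0,1\}^\omega,\mu)$ exactly as in the remark closing Section~\ref{universalsection}. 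So the first half of your argument is correct.

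The gap is that the entire content of the theorem --- producing the exact tree system with $\sup_t\Vert\phi_t\Vert<\infty$ --- is identified but not carried out. Your inductive scheme (choose $x_t$ in $\bigcap_{|r|<n}\ker(T^\ast\phi_r)$ with $Tx_t$ merely linearly independent of its predecessors, then solve for $\phi_t$ by finite-dimensional duality) gives no control on $\Vert\phi_t\Vert$, as you concede, and the proposed remedy is only a pointer to ``the methods of \cite{PAHBSzlenkLarge}'' together with the assertion that a Szlenk derivation supplies the needed room; that names the obstacle rather than overcoming it. Two concrete mechanisms are missing, and both are supplied by Theorem~\ref{scoopie}. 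First, the uniform lower bound does not come from a Szlenk derivation in the paper: one extracts from the norm-non-separable set $T^\ast(B_{Y^\ast})$ an uncountable $\epsilon$-separated subset and, by Cantor--Bendixson, a weak$^\ast$-dense-in-itself part $L$; at each step a new functional is chosen in $L$ inside a carefully designed weak$^\ast$-neighbourhood of the previous one, and the $\epsilon$-separation combined with Lemma~\ref{lowerlemma} produces a unit vector in the prescribed finite-codimensional subspace on which the new functional is bounded below by about $\epsilon/(2+\theta)$. Second, exactness of the relations (which you rely on to avoid any Neumann-series correction) is not a consequence of any separated tree of functionals; in the paper it is forced by the explicit correction term at (\ref{coffee1})--(\ref{coffee2}), which subtracts a linear combination of the increments $f_{\tau(l)}^\ast-f_{\tau(l)^-}^\ast$, and the neighbourhoods $\yoo_1,\dots,\yoo_6$ are engineered so that this correction is small enough to preserve both the norm bounds and the lower estimates. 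Without these two ingredients the induction cannot be closed, so the proposal proves the reduction but not the theorem.
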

Since the domain of $H$, namely $\ell_1$, is separable, and since $H^\ast$ has non-separable range, $H$ is a non-Asplund operator. It therefore follows from Theorem~\ref{thewox} that $H$ is a universal non-Asplund operator.

Other universality results besides those mentioned above can be found in the work of Brooker \cite{PAHBSzlenkLarge}, Cilia and Guti{\'e}rrez \cite{Cilia2015}, Dilworth \cite{Dilworth1985}, Girardi and Johnson \cite{Girardi1997}, Hinrichs and Pietsch \cite{Hinrichs2000}, Oikhberg \cite{Oikhberg2016}, and the Handbook survey on operator ideals by Diestel, Jarchow and Pietsch \cite{Diestel2001}. Recent work of R.~Causey on the existence of non-Asplund spaces of type 2, in a similar vein to work of Pisier and Xu \cite{Pisier1987}, has produced another construction of a universal non-Asplund operator using interpolation techniques \cite{Causey2016}.

We note that there are various other senses in which a large operator may preserve some kind of mathematical structure; several examples related to the subject matter of the current paper are as follows. Rosenthal \cite{Rosenthal1972} has shown that for a Banach space $Y$ and operator $T: C(\{ 0,1\}^\omega)\longrightarrow Y$, the adjoint operator $T^\ast$ has non-separable range if and only if $T$ fixes an isomorphic copy of $C(\{ 0,1\}^\omega)$. For separable Banach spaces $X$ and $Y$ and an operator $T:X\longrightarrow Y$, Dodos has characterised non-separability of $T^\ast (Y^\ast)$ in terms of $T$ fixing an isomorphic copy of $\ell_1$ and/or a `topological copy' of the standard basis of the James tree space (Theorem 3 and Theorem 4 of \cite{Dodos2011}). Godefroy and Finet \cite{Finet1989} have shown that if $X$ is separable and does not contain a copy of $\ell_1$, then an operator $T:X^\ast\longrightarrow X^\ast$ has non-separable range if and only if $T$ `fixes' a biorthogonal system of cardinality that of the continuum.

We now outline the content of subsequent sections of the current paper. In Section~\ref{backgrounder} we establish notation used throughout the paper and provide the requisite background on trees. In Section~\ref{nonsnake} we prove our key result, Theorem~\ref{scoopie}, which is in a similar spirit to the following result of Stegall:
\begin{theorem}[Corollary~1 of \cite{Stegall1975}]\label{doodah}
Let $X$ be a separable Banach space, $K$ a subset of $X^\ast$ that is non-separable in the norm topology and $G_\delta$ in the weak$^\ast$ topology, and $\epsilon>0$. Then there exists a weak$^\ast$-homeomorphism $\Xi$ of $\{ 0,1\}^\omega$ onto a subset of $K$ and a bounded sequence $(x_m)_{m=1}^\infty$ in $X$ such that
\[
\sum_{m=1}^\infty \Vert Tx_m-\chi_{\Delta_{t_m}}\Vert \leq\epsilon,
\]
where $T: X\longrightarrow C(\{0,1\}^\omega)$ is the operator satisfying $(Tx)(s)=\langle \Xi(s),x\rangle$ for every $x\in X$ and $s\in\{0,1\}^\omega$.
\end{theorem}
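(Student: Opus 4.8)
The plan is to build, by one simultaneous induction, a dyadic scheme of relatively weak$^\ast$-open subsets of $K$ together with the sequence $(x_m)$, in the spirit of Stegall's original argument; the Haar system reappears because the $x_m$ will be carried by $T$ close to the functions $\chi_{\Delta_{t_m}}$, whose linear span coincides with that of the $h_m$.

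\emph{Reductions.} Since a countable union of norm-separable sets is norm-separable, some $K\cap nB_{X^\ast}$ is non-separable, so we may take $K$ bounded, hence weak$^\ast$-metrizable; fix a compatible metric $\rho$ and note $(K,\mathrm{weak}^\ast)$ is second countable. Fix $\delta>0$ so that $K$ has no countable $\delta$-net. Deleting from $K$ the union of all its $\delta$-separable relatively weak$^\ast$-open subsets (a $\delta$-separable set, by the Lindel\"of property) and iterating, we may assume in addition that \emph{no} nonempty relatively weak$^\ast$-open subset of $K$ is $\delta$-separable; in particular each such set contains two points at norm distance $\ge\delta$. Write $K=\bigcap_nW_n$ with the $W_n$ weak$^\ast$-open in $X^\ast$. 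Finally, choosing $z_0\in K\setminus\{0\}$ and $x_1\in X$ with $\langle z_0,x_1\rangle=1$ and replacing $K$ by the (still non-$\delta$-separable, still $G_\delta$) set $\{z\in K:|\langle z,x_1\rangle-1|<\epsilon_1\}$, we may assume $|\langle z,x_1\rangle-1|<\epsilon_1$ on $K$; this handles $m=1$, where $t_1$ is the empty string and $\chi_{\Delta_{t_1}}=\mathbf 1$, and makes approximate ``constants'' $\langle\cdot,cx_1\rangle$ available on $K$. Here $(\epsilon_m)_{m\ge1}$ are positive reals with $\sum_m\epsilon_m\le\epsilon$, to be forced to decay fast below.

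\emph{The scheme, and why it suffices.} We construct, by induction on $m$ following the enumeration $(t_m)$ of $\dee$, nonempty relatively weak$^\ast$-open $G_{t_m}\subseteq K$ and $x_m\in X$ so that, with $n=|t_m|$ (bars denoting weak$^\ast$-closures): $G_{t_1}=K$; $\overline{G_{t^{\smallfrown}0}},\overline{G_{t^{\smallfrown}1}}$ are disjoint, non-$\delta$-separable, of $\rho$-diameter $<2^{-|t|}$, contained in $G_t\cap W_{|t|+1}$; $|\langle z,x_m\rangle-1|<\epsilon_m$ for $z\in\overline{G_{t_m}}$ and $|\langle z,x_m\rangle|<\epsilon_m$ for $z\in\overline{G_r}$ whenever $r\in\{0,1\}^n$ with $r\ne t_m$; and $\sup_m\|x_m\|<\infty$. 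Granting this, for $b\in\{0,1\}^\omega$ the nested nonempty weak$^\ast$-compact sets $\overline{G_{b|_k}}$ have $\rho$-diameters tending to $0$, so their intersection is a single point $\Xi(b)$; since these sets lie in the $W_k$ we get $\Xi(b)\in\bigcap_kW_k=K$, and $\Xi$ is injective (disjoint sibling closures) and weak$^\ast$-continuous on the compact space $\{0,1\}^\omega$, hence a weak$^\ast$-homeomorphism onto a subset of $K$. As $\Xi(s)\in\overline{G_{s|_n}}$ and $s|_n=t_m$ exactly when $s\in\Delta_{t_m}$, the estimates give $|\langle\Xi(s),x_m\rangle-\chi_{\Delta_{t_m}}(s)|<\epsilon_m$ for all $s$, i.e. $\|Tx_m-\chi_{\Delta_{t_m}}\|\le\epsilon_m$, whence $\sum_m\|Tx_m-\chi_{\Delta_{t_m}}\|\le\sum_m\epsilon_m\le\epsilon$ for the operator $T$ of the statement.

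\emph{The inductive step, and the main obstacle.} Building the tree $(G_t)$ by itself is routine: at a split, failure of $\delta$-separability supplies two points at norm distance $\ge\delta$, hence a norm-one functional separating them; one takes for the two children small weak$^\ast$-neighbourhoods of these points, then shrinks them (by regularity of the weak$^\ast$ topology) to have disjoint closures inside the parent and inside the relevant $W_n$, small $\rho$-diameter, and all previously chosen $x_i$ nearly constant on them — and this, together with the inclusions in the $W_n$ (the only use of the $G_\delta$-hypothesis), places $\Xi(\{0,1\}^\omega)$ inside $K$. The main obstacle will be obtaining \emph{bounded} vectors $x_m$ realising the middle requirement: $\langle\cdot,x_m\rangle$ must be within $\epsilon_m$ of $1$ on the whole of $\overline{G_{t_m}}$ and within $\epsilon_m$ of $0$ on \emph{every} other level-$|t_m|$ cell, not merely on the sibling. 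Equivalently, writing $\Lambda=\Xi(\{0,1\}^\omega)$, the copy $\Lambda$ must be constructed so that the indicator in $C(\Lambda)$ of $\Xi(\Delta_{t_m})$ is $\epsilon_m$-approximated by the restriction of some $\langle\cdot,y\rangle$ with $\|y\|_X$ bounded independently of $m$ — that is, a fixed bounded subset of $X$ must reproduce, to arbitrary precision, the full ``branch'' structure of $\dee$ read off $\Lambda$. Securing this forces the tree and the $x_m$ to be built together: the separating functional used at each split must be chosen so as to (approximately) annihilate all earlier cells lying off the current subtree \emph{and} to split the two children with a controlled gap, which after subtracting an approximate constant $\langle\cdot,cx_1\rangle$ becomes a genuine $0/1$ gap; feasibility is maintained by creating each cell, when it is introduced, small enough in the finitely many coordinate directions that will later be used, so that successive generations of cells stay ``in general position''. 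Organising the accumulating errors along the Haar recursion $\chi_{\Delta_{t^{\smallfrown}0}}=\tfrac12(\chi_{\Delta_t}+h)$, $\chi_{\Delta_{t^{\smallfrown}1}}=\tfrac12(\chi_{\Delta_t}-h)$ — so that they halve from one generation to the next — one prescribes the precision of the level-$n$ splits to beat a fixed multiple of $\epsilon\,2^{-2n}$ (absorbing the bounded re-centring factors) and gets $\sum_m\epsilon_m\le\epsilon$. This interlocking construction is, as in Stegall's original treatment, the substantive point; the remaining verifications are bookkeeping.
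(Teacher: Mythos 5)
First, a point of orientation: the paper does not prove Theorem~\ref{doodah} at all. It is quoted as Corollary~1 of \cite{Stegall1975} purely for context, and the paper's own contribution (Theorem~\ref{scoopie}) is a strengthening of it under an added convexity hypothesis. So there is no internal proof to compare yours against. Judged on its own terms, your outer layer is the right one and matches Stegall's organisation: restrict to a bounded (hence weak$^\ast$-metrizable) piece of $K$; pass by a Lindel\"of/Cantor--Bendixson argument to a kernel in which no nonempty relatively weak$^\ast$-open set is $\delta$-separable; build a dyadic scheme of cells with disjoint shrinking closures threaded through the sets $W_n$ so that branch limits land in $K$; and read off $\Xi$ together with $\Vert Tx_m-\chi_{\Delta_{t_m}}\Vert\leq\epsilon_m$, $\sum_m\epsilon_m\leq\epsilon$.

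The gap is precisely the step you flag as ``the substantive point'' and then dismiss with ``the remaining verifications are bookkeeping'': producing, for each $m$, a vector $x_m$ of norm bounded \emph{independently of $m$} with $\langle\cdot,x_m\rangle$ within $\epsilon_m$ of $1$ on $\overline{G_{t_m}}$ and within $\epsilon_m$ of $0$ on every other level-$\vert t_m\vert$ cell. Neither of your suggested mechanisms closes it. Shrinking each cell at creation ``in the finitely many coordinate directions that will later be used'' is circular: those directions are the splitting vectors of later steps and do not exist when the cell is created. What one can actually do is shrink all existing cells each time a new splitting vector $u$ is chosen, so that $\langle\cdot,u\rangle$ has small oscillation on every cell; but then $\langle\cdot,u\rangle$ is only approximately \emph{constant}, not approximately \emph{zero}, on the off-branch cells, with constants as large as $\sup_{z^\ast\in K}\Vert z^\ast\Vert$. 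Correcting these constants by subtracting multiples of previously built approximate indicators makes the norms compound: at level $n$ one subtracts up to $2^{n}$ terms with coefficients of order $(\sup_K\Vert\cdot\Vert)/\delta$ times the norms of the level-$(n-1)$ vectors, so $\sup_m\Vert x_m\Vert$ blows up and the errors cannot be summed to $\epsilon$. What is missing is a quantitative lemma guaranteeing that the splitting vector can be taken inside the preannihilator of the finite-dimensional subspace of $X^\ast$ spanned by the data already constructed, while retaining a separation comparable to $\delta$ (this is available: if every difference of points of a cell were within $\delta/2$ of a fixed finite-dimensional subspace, the cell would admit a finite $\delta$-net). This is exactly the role played by Lemma~\ref{lowerlemma} and the choice $w_{\tau(k+1)}\in S_{F_\perp}$, followed by the controlled correction of the functionals via the neighbourhoods $\yoo_1,\dots,\yoo_6$, in the paper's proof of Theorem~\ref{scoopie}. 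Without an ingredient of this kind the induction does not close, so the proposal as written is not a proof.
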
The sequence $(t_m)_{m=1}^\infty$ in the statement of Theorem~\ref{doodah} is as defined in the paragraph preceding Theorem~\ref{thewox}. Theorem~\ref{doodah} is used by Stegall in \cite{Stegall1975} to derive several other substantial results for non-Asplund spaces and operators, amongst them Theorem~\ref{thewox} above. The key difference between Theorem~\ref{doodah} and Theorem~\ref{scoopie} is essentially that, under the stronger hypothesis that $K$ is absolutely convex - as in the statement of Theorem~\ref{scoopie} - we may deduce from Theorem~\ref{scoopie} that the statement of Theorem~\ref{doodah} holds with $\epsilon=0$. 

In Section~\ref{basissection} we apply Theorem~\ref{scoopie} to give a new proof of the following known result: if $X$ is a separable Banach space with $X^\ast$ non-separable, then $X$ admits a subspace $Y$ with a basis and with $Y^\ast$ non-separable (Theorem~\ref{secondbasistheorem}). For such $X$, Theorem~\ref{scoopie} also yields the existence of a subspace $Z$ of $X$ such that $X/Z$ has a basis and $(X/Z)^\ast$ is non-separable (Theorem~\ref{yetanotherbasistheorem}). In Section~\ref{universalsection} we turn our attention to universality results for certain classes of non-Asplund operators. We obtain Theorem~\ref{thewox} as a corollary of our main result Theorem~\ref{scoopie} and obtain stronger universality results for non-Asplund operators having separable codomain. In Section~\ref{pelchsection} we establish an operator-theoretic characterisation of separable Banach spaces with non-separable dual (Theorem~\ref{weakerbeaker}), analogous to Pe{\l}czy{\'n}ski's well-known operator-theoretic characterisation of separable Banach spaces containing a copy of $\ell_1$.

\section{Notation and preliminaries}\label{backgrounder}

We work with Banach spaces over either $\real$ or $\complex$. Typical Banach spaces are denoted by the letters $W$, $X$, $Y$ and $Z$, with the identity operator of $X$ denoted $Id_X$. We write $X^\ast$ for the dual space of $X$ and denote by $\imath_X$ the canonical embedding of $X$ into $X^{\ast\ast}$. We define $B_X:=\{ x\in X \mid\Vert x\Vert\leq 1\}$ and $B_X^\circ:= \{ x\in X \mid\Vert x\Vert< 1\}$. By a {\it subspace} of a Banach space $X$ we mean a vector subspace of $X$ that is closed in the norm topology. For a Banach space $X$ and sets $C\subseteq X$ and $D\subseteq X^\ast$ we define $C^\perp:= \{x^\ast \in X^\ast\mid \forall x\in C, \, x^\ast (x) =0 \}$ and $D_\perp = \{ x\in X\mid \forall x^\ast\in D, \, x^\ast (x) =0 \}$. We denote by $[C]$ the norm closed linear hull of $C$ in $X$, with a typical variation on this notation being that for an indexed set $\{ x_i\mid i\in I \}\subseteq X$ we may write $[x_i]_{i\in I}$ or $[x_i\mid i\in I]$ in place of $[\{x_i\mid i\in I\}]$. We shall make use of the well-known fact that, for a Banach space $X$ and a sequence $(x_m^\ast)_{m=1}^\infty\subseteq X^\ast$, the quotient map $Q: X\longrightarrow X/\bigcap_{m=1}^\infty\ker(x_m^\ast)$ has the property that $Q^\ast$ is an isometric weak$^\ast$-isomorphism of $(X/\bigcap_{m=1}^\infty\ker(x_m^\ast))^\ast$ onto the weak$^\ast$-closed linear hull of $(x_m^\ast)_{m=1}^\infty$.

Operator ideals are denoted by script letters such as $\opideal$. We denote by $\sepop$ the closed operator ideal consisting of all operators with separable range. The closed operator ideal $\sepop^\ast$, defined above as the class of all operators $T$ for which the adjoint $T^\ast$ belongs to $\sepop$, is a subclass of $\sepop$ \cite[Proposition~4.4.8]{Pietsch1980}. We denote by $\asplundop$ the operator ideal of {\it Asplund} operators, which consists of those operators $T$ for which $T|_Z\in\sepop^\ast$ for any separable subspace $Z$ of the domain of $T$.

For a set $S$ and a subset $R\subseteq S$ we write $\chi_R$ for the indicator function of $R$ in $S$ (with the underlying set $S$ always clear from the context). When discussing the Banach space $\ell_1(S)$ for some set $S$, for $s\in S$ we typically denote by $e_s$ the element of $\ell_1(S)$ satisfying $e_s(s')=1$ if $s'=s$ and $e_s(s') = 0$ if $s'\neq s$ ($s'\in S$). We thus denote by $(e_n)_{n=1}^\infty$ the standard unit vector basis of $\ell_1=\ell_1(\nat)$.

We shall repeatedly use the fact that for a set $I$, Banach space $X$ and family $\{ x_i\mid i\in I\}\subseteq X$ with $\sup_{i\in I}\Vert x_i\Vert <\infty$, there exists a unique element of $\allop(\ell_1(I), X)$ satisfying $e_i\mapsto x_i$, $i\in I$.

For a Banach space $X$, a subset $A\subseteq X$, and $\epsilon>0$, we say that $A$ is {\it $\epsilon$-separated} if $\Vert x-y\Vert>\epsilon$ for any distinct $x,y\in A$. For $B\subseteq C\subseteq X$ and $\delta>0$ we say that $B$ is a {\it $\delta$-net in $C$} if for every $w\in C$ there exists $z\in B$ such that $\Vert w-z\Vert \leq \delta$.

A {\it tree} is a partially ordered set $(\tee,\preceq)$ for which the set $\{ s\in\tee\mid s\preceq t\}$ is well-ordered for every $t\in\tee$. We shall frequently suppress the partial order $\preceq$ and refer to the underlying set $\tee$ as the tree. For $\ess\subseteq\tee$ we denote by $\MIN(\ess)$ the set of all minimal elements of $\ess$. 
A {\it subtree} of $\tee$ is a subset of $\tee$ equipped with the partial order induced by the partial order of $\tee$, which we also denote $\preceq$. A {\it chain} in $\tee$ is a totally ordered subset of $\tee$. A {\it branch} of $\tee$ is a maximal (with respect to set inclusion) totally ordered subset of $\tee$. We say that $\tee$ is {\it chain-complete} if every chain $\cee$ in $\tee$ admits a unique least upper bound. A subset $\ess\subseteq\tee$ is said to be {\it downwards closed in} $\tee$ if $\ess=\bigcup_{t\in\ess}\{ s\in\tee\mid s\preceq t\}$. For $s,t\in\tee$ we write $s\prec t$ to mean that $s\preceq t$ and $s\neq t$. For $t\in\tee$ we define the following sets:
\begin{align*}
\tee[\preceq t]&= \{ s\in\tee\mid s\preceq t\}\\
\tee[\prec t]&= \{ s\in\tee\mid s\prec t\}\\
\tee[t \preceq]&= \{ s\in\tee\mid t\preceq s\}\\
\tee[t\prec]&= \{ s\in\tee\mid t\prec s\}\\
\tee[t+]&= \MIN(\tee[t\prec])
\end{align*}
By $t^-$ we denote the maximal element of $\tee[\prec t]$, if it exists (that is, if the order type of $\tee[\prec t]$ is a successor). If $s,t\in\tee$ are such that $s\npreceq t$ and $t\npreceq s$, then we write $s\perp t$. 

Let $\tee= (\tee,\preceq)$ be a tree, $\alpha$ an ordinal and $\psi:\alpha\longrightarrow\tee$ a surjection. Then $\psi$ induces a well-ordering of $\tee$ that extends $\preceq$. Indeed, define $A_0 = \tee[\preceq \psi(0)]$ and, if $\beta>0$ is an ordinal such that $A_\gamma$ has been defined for all $\gamma<\beta$, define $A_\beta = \tee[\preceq \psi(\beta)]\setminus \bigcup_{\gamma<\beta}\tee[\preceq \psi(\gamma)]$. The induced well-order $\leq$ of $\tee$ is defined by declaring $s\leq t$, where $s\in A_\beta$ and $t\in A_{\beta'}$, if $\beta<\beta'$ or if $\beta=\beta'$ and $s\preceq t$. Notice that if $\tee$ is countable and $\tee[\prec t]$ is finite for every $t\in\tee$, then the well-ordering of $\tee$ induced as above by a surjection of $\omega$ onto $\tee$ is of order type $\omega$. In fact, the following statements are equivalent:
\begin{itemize}
\item[(i$_0$)] $\tee$ is countable and $\tee[\prec t]$ is finite for every $t\in\tee$;
\item[(ii$_0$)] There exists a bijection $\tau$ of $\omega$ onto $\tee$ such that $\tau(l)\preceq \tau(m)$ implies $l\leq m$ for $l,m<\omega$.
\end{itemize}

\begin{example}\label{fullcounter}
Let $\Omega:=\bigcup_{n<\omega}\prod_n\omega$. That is, $\Omega$ is the set of all finite (including possibly empty) sequences of finite ordinals. We define an order $\sqsubseteq$ on $\Omega$ by saying that $s\sqsubseteq t$ if and only if $s$ is an initial segment of $t$. For $n<\omega$ and $t\in\Omega$ we denote by $n^\smallfrown t$ the concatenation of $(n)$ with $t$; that is, $n^\smallfrown t=(n)$ if $t=\emptyset$ and $n^\smallfrown t=(n,n_1,\ldots,n_k)$ if $t=(n_1,\ldots,n_k)$. Notice that a tree is order-isomorphic to a subtree of $\Omega$ if and only if it satisfies the equivalent conditions (i$_0$) and (ii$_0$) above.
\end{example}

There are various natural topologies for trees, many of which are described in \cite{Nyikos1997}. The tree topology of interest to us is the {\it coarse wedge topology}, which is compact and Hausdorff for many trees. The coarse wedge topology of $(\tee,\preceq)$ is that topology on $\tee$ formed by taking as a subbase all sets of the form $\tee[t\preceq]$ and $\tee\setminus \tee[t\preceq]$, where the order type of $\tee[\prec t]$ is either $0$ or a successor ordinal. For a tree $(\tee,\preceq)$, $t\in \tee$ and $\eff\subseteq \tee$, define \[ \dubyoo_\tee(t,\eff):= \tee[t\preceq]\setminus \bigcup_{s\in \eff}\tee[s\preceq] \,.\] The following proposition is clear.
\begin{proposition}\label{coarsefacts}
Let $(\tee,\preceq)$ be a tree and let $t\in\tee$ be such that the order type of $\tee[\prec t]$ is $0$ or a successor ordinal. Then the coarse wedge topology of $\tee$ admits a local base of clopen sets at $t$ consisting of all sets of the form $\dubyoo_\tee(t,\eff)$, where $\eff\subseteq \tee[t+]$ is finite.
\end{proposition}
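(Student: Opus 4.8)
The plan is to check three things about the sets $\dubyoo_\tee(t,\eff)$ with $\eff\subseteq\tee[t+]$ finite: that each is an open set containing $t$, that each is closed, and that every open neighbourhood of $t$ contains one of them. For the first two points I would first observe that each $s\in\tee[t+]$ is an immediate successor of $t$, so $\tee[\prec s]=\tee[\preceq t]$ has order type $(\text{order type of }\tee[\prec t])+1$, a successor ordinal; hence every such $s$, as well as $t$ itself by hypothesis, is a node for which both $\tee[s\preceq]$ and $\tee\setminus\tee[s\preceq]$ lie in the defining subbase. In particular $\tee[s\preceq]$ and $\tee[t\preceq]$ are clopen, so $\dubyoo_\tee(t,\eff)=\tee[t\preceq]\cap\bigcap_{s\in\eff}(\tee\setminus\tee[s\preceq])$ is clopen as a finite intersection of clopen sets; and $t\in\dubyoo_\tee(t,\eff)$ since $t\in\tee[t\preceq]$ while $t\notin\tee[s\preceq]$ for each $s\in\eff$ (because $t\prec s$).

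For the remaining point, suppose $U$ is open with $t\in U$. Using that finite intersections of subbasic sets form a base, I would fix a basic open set $B$ with $t\in B\subseteq U$, say $B=\bigcap_{i\in F}\tee[s_i\preceq]\cap\bigcap_{j\in G}(\tee\setminus\tee[r_j\preceq])$ with $F,G$ finite. Since $t\in B$, each $s_i\preceq t$, so $\tee[t\preceq]\subseteq\tee[s_i\preceq]$; and each $r_j\npreceq t$. The one step that needs a moment's thought is the following: whenever $\tee[t\preceq]\cap\tee[r_j\preceq]\neq\emptyset$, there is a unique $s_j\in\tee[t+]$ with $s_j\preceq r_j$. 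To see this, let $u$ be a common upper bound of $t$ and $r_j$; then $\tee[\preceq u]$ is well-ordered, hence totally ordered, so $t$ and $r_j$ are comparable, and since $r_j\npreceq t$ we get $t\prec r_j$. The least element of the nonempty well-ordered set $\{v\in\tee\mid t\prec v\preceq r_j\}$ is then an immediate successor $s_j$ of $t$ with $s_j\preceq r_j$; uniqueness holds because two immediate successors of $t$ lying below $r_j$ are comparable and both minimal in $\tee[t\prec]$, hence equal.

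Finally I would put $\eff:=\{s_j\mid j\in G\text{ and }\tee[t\preceq]\cap\tee[r_j\preceq]\neq\emptyset\}$, a finite subset of $\tee[t+]$, and verify $\dubyoo_\tee(t,\eff)\subseteq B$. Indeed $\dubyoo_\tee(t,\eff)\subseteq\tee[t\preceq]\subseteq\tee[s_i\preceq]$ for all $i\in F$; and for $j\in G$, if $\tee[t\preceq]\cap\tee[r_j\preceq]=\emptyset$ then $\dubyoo_\tee(t,\eff)\subseteq\tee[t\preceq]$ is disjoint from $\tee[r_j\preceq]$, while if $\tee[t\preceq]\cap\tee[r_j\preceq]\neq\emptyset$ then $\tee[r_j\preceq]\subseteq\tee[s_j\preceq]\subseteq\bigcup_{s\in\eff}\tee[s\preceq]$, which $\dubyoo_\tee(t,\eff)$ avoids by construction; either way $\dubyoo_\tee(t,\eff)\subseteq\tee\setminus\tee[r_j\preceq]$. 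Hence $\dubyoo_\tee(t,\eff)\subseteq B\subseteq U$, and as $t\in\dubyoo_\tee(t,\eff)$ this shows the family is a local base at $t$, finishing the proof. The degenerate cases $F=\emptyset$ or $G=\emptyset$ are handled by the usual convention that an empty intersection equals $\tee$ and present no difficulty; so overall this is a routine verification, the comparability argument above being the only non-formal ingredient.
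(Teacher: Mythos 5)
Your verification is correct and complete in all its steps: the observation that each $s\in\tee[t+]$ satisfies $\tee[\prec s]=\tee[\preceq t]$ (successor order type), so the relevant wedges and their complements are all subbasic and hence $\dubyoo_\tee(t,\eff)$ is clopen, and the comparability argument producing the unique immediate successor $s_j\preceq r_j$ are exactly the points that need checking. The paper offers no proof at all (it states the proposition as ``clear''), so there is no alternative argument to compare against; your writeup is a sound filling-in of that routine verification.
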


The following result is proved in the aforementioned paper of Nyikos.
\begin{theorem}\label{coarsecompact}(\cite[Corollary~3.5]{Nyikos1997})
Let $\tee$ be a tree. The following are equivalent.
\begin{itemize}
\item[(i)] $\tee$ is chain-complete and $\MIN(\tee)$ is finite.
\item[(ii)] The coarse wedge topology of $\tee$ is compact and Hausdorff.
\end{itemize}
\end{theorem}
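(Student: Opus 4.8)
The plan is to prove the two implications separately; for \textbf{(i)$\Rightarrow$(ii)} I would treat the Hausdorff property and compactness in turn. For the Hausdorff property, take distinct $s,t\in\tee$. If $s\prec t$ and $t$ is admissible we separate $t$ from $s$ by $\tee[t\preceq]$ and $\tee\setminus\tee[t\preceq]$; if $s\prec t$ but $\tee[\prec t]$ has limit order type, well-orderedness of the chain $\tee[\preceq t]$ lets us choose the immediate successor $r$ of $s$ in that chain, which is admissible with $s\prec r\prec t$, and then $\tee[r\preceq]$, $\tee\setminus\tee[r\preceq]$ work. If $s\perp t$, look at the downward closed, well-ordered chain $\cee$ of common predecessors, written as $\cee=\tee[\prec x]=\tee[\prec y]$ with $x\preceq s$, $y\preceq t$ and $x\perp y$; chain-completeness forces $\cee$ to have a maximum $p^\ast$, since otherwise $\sup\cee$ would lie below both $x$ and $y$ and hence belong to $\tee[\preceq x]\cap\tee[\preceq y]=\cee$, contradicting the absence of a maximum. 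Then $x,y\in\tee[p^\ast+]$ are admissible and $\tee[x\preceq]$, $\tee[y\preceq]$ are disjoint neighbourhoods of $s$ and $t$.

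For compactness I would apply Alexander's subbase lemma, so it suffices to extract a finite subcover from an arbitrary cover $\mathscr C$ of $\tee$ by sets $\tee[a\preceq]$ ($a\in A$) and $\tee\setminus\tee[b\preceq]$ ($b\in B$), with $A,B$ consisting of admissible nodes. First, $B$ is a chain, as two incomparable members would yield a two-element subcover. If $B=\emptyset$, then the sets $\tee[a\preceq]$ cover $\tee$, which forces $\MIN(\tee)\subseteq A$; since $\tee=\bigcup_{m\in\MIN(\tee)}\tee[m\preceq]$ and $\MIN(\tee)$ is finite, we are done. If $B\neq\emptyset$, put $c=\sup B$ (using chain-completeness). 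Every $b\in B$ satisfies $b\preceq c$, so $c$ is covered by no negative set and hence by some $\tee[a\preceq]$. If $c\in B$ then $\{\tee[a\preceq],\tee\setminus\tee[c\preceq]\}$ covers $\tee$; if $c\notin B$ then $B$ has no maximum, whence $B$ is cofinal in $\tee[\prec c]$, so $c$ is a limit node and not admissible, forcing $a\prec c$, and a short argument gives $a\preceq b$ for some $b\in B$, so $\{\tee[a\preceq],\tee\setminus\tee[b\preceq]\}$ covers $\tee$. In every case $\mathscr C$ has a finite subcover.

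For \textbf{(ii)$\Rightarrow$(i)}, finiteness of $\MIN(\tee)$ is immediate: the sets $\tee[m\preceq]$ for $m\in\MIN(\tee)$ are clopen, pairwise disjoint and cover $\tee$, so compactness permits only finitely many of them. For chain-completeness I argue by contraposition. Given a nonempty chain with no least upper bound, pass to its downward closure to obtain a downward closed, well-ordered chain $\cee$ with no maximum; the set $\cee'$ of admissible members of $\cee$ is then infinite and cofinal in $\cee$, hence has the same set $P$ of upper bounds. If $P=\emptyset$, the open sets $\tee\setminus\tee[t\preceq]$ ($t\in\cee'$) cover $\tee$ with no finite subcover --- a finite subfamily omits the node with the largest index --- contradicting compactness. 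If $P\neq\emptyset$, then $P=\bigcap_{t\in\cee'}\tee[t\preceq]$ is a nonempty closed set with no minimum, so it has two minimal elements $p_0\perp p_1$. One checks that $p_0$ and $p_1$ cannot be admissible --- an immediate predecessor of $p_0$ would be an upper bound of $\cee'$ strictly below $p_0$, contradicting minimality in $P$ --- and then, for admissible $r$, that $r\preceq p_0$ iff $r\preceq p_1$: indeed if $r\prec p_0$ but $r$ fails to dominate some $t\in\cee'$, then $r\prec t\preceq p_1$. Consequently every subbasic open set containing $p_0$ contains $p_1$ and conversely, so $p_0$ and $p_1$ have no disjoint neighbourhoods and $\tee$ is not Hausdorff.

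The step I expect to be the main obstacle is this last one: showing that two incomparable minimal upper bounds of the offending chain cannot be separated in the coarse wedge topology. The delicate points are first ruling out that $p_0,p_1$ are admissible, so that the subbasic sets $\tee[p_0\preceq]$ and $\tee[p_1\preceq]$ are not available to separate them, and then checking that the surviving subbasic neighbourhoods of $p_0$ and of $p_1$ genuinely coincide; this rests on careful bookkeeping with the order types of the sets $\tee[\prec\cdot]$ and with the cofinal subchain $\cee'$. By comparison the compactness half is essentially forced once one commits to Alexander's subbase lemma and tracks the supremum of the negative indices, and the Hausdorff half reduces cleanly to the comparable and incomparable cases above.
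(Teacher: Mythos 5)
The paper offers no proof of this theorem --- it is quoted directly from Nyikos \cite[Corollary~3.5]{Nyikos1997} --- so there is no in-paper argument to measure yours against; I have therefore checked your proof on its own terms, and it is correct. The overall architecture (Alexander's subbase lemma for compactness, a comparable/incomparable case split for the Hausdorff property, and contraposition via the minimal upper bounds of a chain lacking a least upper bound for the converse) is the standard route to this result, and the step you flag as the main obstacle does go through as you describe: the two minimal elements $p_0\perp p_1$ of the set $P$ of upper bounds cannot be admissible (an immediate predecessor of $p_0$ would be an upper bound of the cofinal admissible subchain $\cee'$ strictly below $p_0$, and order type $0$ is impossible since $\cee'$ is infinite), and for admissible $r\prec p_0$ minimality of $p_0$ in $P$ forces some $t\in\cee'$ with $r\prec t\preceq p_1$, so $p_0$ and $p_1$ share every subbasic neighbourhood. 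The only point needing a patch is in the Hausdorff argument for $s\perp t$: taking $p^\ast=\max\cee$ presupposes that the chain $\cee$ of common predecessors is nonempty. When $\cee=\emptyset$ there is no $p^\ast$, but the nodes $x$ and $y$ are then minimal in $\tee$, so $\tee[\prec x]$ and $\tee[\prec y]$ have order type $0$, $x$ and $y$ are admissible, and the disjoint wedges $\tee[x\preceq]$ and $\tee[y\preceq]$ still separate $s$ and $t$; with that half-line added the proof is complete.
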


\section{Non-Asplund Banach spaces and operators}\label{nonsnake}
In this section we derive our main technical result using techniques developed in \cite{PAHBSzlenkLarge}. We shall mainly work with downwards-closed subtrees of $(\Omega,\sqsubseteq)$, the tree of finite sequences of finite ordinals. The discussion preceding Example~\ref{fullcounter} alludes to the fact that results stated in terms of such trees can be expected to be readily extended to the class of countable trees $(\tee,\preceq)$ with $\tee[\prec t]$ finite for every $t\in\tee$. The advantage in restricting our attention to subtrees of $\Omega$ is a technical one, in particular that such trees do not contain any of their infinite branches as elements; this property of subtrees of $\Omega$ allows us to take the union of a tree $\tee$ with its set of infinite branches as a chain-completion of $\tee$ in a natural way which we now outline.

For $\tee$ a downwards-closed subtree of $\Omega$, we denote by $\partial\tee$ the set of all infinite branches of $\tee$ and define $\overline{\tee}:=\tee\cup\partial\tee$. Note that $\partial\tee\subseteq\partial\Omega$ for such $\tee$, hence $\overline{\tee}\subseteq\overline{\Omega}$. We extend the order $\sqsubseteq$ on $\Omega$ to an order $\sqsubseteq'$ on $\overline{\Omega}$ as follows: for $s,t\in\overline{\Omega}$ write $s\sqsubseteq' t$ if and only if one of the following conditions holds:
\begin{itemize}
\item[(i)] $s,t\in\Omega$ and $s\sqsubseteq t$;
\item[(ii)] $s\in t\in\partial\Omega$; and,
\item[(iii)] $s=t\in \partial\Omega$.
\end{itemize}
The fact that $\Omega\cap\partial\Omega=\emptyset$ ensures that $\sqsubseteq'$ is well-defined partial order making $\overline{\Omega}$ a tree. Moreover it is straightforward to check that $(\overline{\tee},\sqsubseteq')$ is a chain-complete tree for any downwards-closed $\tee\subseteq\Omega$. It follows by Theorem~\ref{coarsecompact} that the coarse wedge topology of $\overline{\tee}$ is compact and Hausdorff.

The following theorem is the main result of the current paper.

\begin{theorem}\label{scoopie}
Let $W$ and $Z$ be Banach spaces such that $Z$ is separable. Suppose $R\in\allop(W,Z)$, $K\subseteq Z^\ast$ and $\epsilon>0$ are such that $K$ is absolutely convex and weak$^\ast$-compact and $R^\ast (K)$ contains an uncountable $\epsilon$-separated subset. Then for any $\delta>0$, any $\theta>0$, any downwards-closed $\tee\subseteq\Omega$, and any bijection $\tau:\omega\longrightarrow \tee$ such that $\tau(l)\sqsubseteq\tau(m)$ implies $l\leq m$, there exist families $(w_t)_{t\in\tee}\subseteq S_W$ and $(z_t^\ast)_{t\in\overline{\tee}}\subseteq K$ such that:
\begin{itemize}
\item[(i)] \begin{equation}\label{biorslogger}
\langle z_t^\ast ,Rw_s\rangle=\begin{cases}
\langle z_s^\ast,Rw_s\rangle>\frac{\epsilon}{2+\theta}&\mbox{if }s\sqsubseteq' t\\ 0 &\mbox{if }s\not\sqsubseteq' t
\end{cases}, \qquad s\in\tee,\, t\in\overline{\tee};
\end{equation}
\item[(ii)] The map $t\mapsto z_t^\ast$ from $\overline{\tee}$ to $Z^\ast$ is coarse-wedge-to-weak$^\ast$ continuous;
\item[(iii)] $(w_{\tau(m)})_{m=0}^\infty$ is a basic sequence with basis constant not exceeding $1+\delta$; and,
\item[(iv)] With $Z_0: = \bigcap_{t\in\tee}\ker(z_t^\ast)$ and $Q:Z\longrightarrow Z/Z_0$ denoting the quotient map, $(QRw_{\tau(m)})_{m=0}^\infty$ is a basis for $Z/Z_0$ with basis constant not exceeding $1+\delta$.
\end{itemize}
\end{theorem}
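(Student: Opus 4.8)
The plan is to build the two families $(w_t)_{t\in\tee}$ and $(z_t^\ast)_{t\in\overline{\tee}}$ by transfinite (in fact $\omega$-length) recursion along the enumeration $\tau$ of $\tee$, interleaving three tasks at each step: (a) choosing the new functional $z^\ast$ so as to witness the ``branching'' structure recorded in \eqref{biorslogger}, (b) choosing the new vector $w$ so that the biorthogonality-type relations hold, and (c) performing a small perturbation / net-refinement so that the basis-constant estimates in (iii) and (iv) come out below $1+\delta$. The starting point is the hypothesis that $R^\ast(K)$ contains an uncountable $\epsilon$-separated set $\{R^\ast k_\alpha\}$; since $K$ is weak$^\ast$-compact and $Z$ is separable, $K$ is weak$^\ast$-metrizable, so we may pass to a weak$^\ast$-convergent subnet/sequence of the $k_\alpha$ and extract, in the spirit of the tree arguments of \cite{PAHBSzlenkLarge}, a ``Szlenk-type'' splitting: at each node $t$ already constructed with functional $z_t^\ast$ and a designated relatively weak$^\ast$-open slice of $K$ still carrying uncountably much of the $\epsilon$-separated mass, we find two (hence, by iterating, infinitely many) sub-slices on which some test vector $w$ separates values by more than $\epsilon/(2+\theta)$. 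This is exactly where absolute convexity of $K$ is used: it lets us symmetrize and rescale so that the gap $\epsilon$ (rather than $\epsilon/2$, as in Stegall's Theorem~\ref{doodah}) is available, which is what ultimately upgrades $\epsilon>0$ to the ``$\epsilon=0$'' phenomenon advertised in the introduction.

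Concretely, I would fix a countable weak$^\ast$-dense subset of $K$ and a decreasing sequence of tolerances $\delta_m\downarrow 0$ with $\prod(1+\delta_m)<1+\delta$, and recurse on $m$ with $t=\tau(m)$. Maintain the inductive hypotheses: (1) for all $l<m$, \eqref{biorslogger} holds for $s,t\in\{\tau(0),\dots,\tau(m-1)\}$; (2) $(w_{\tau(l)})_{l<m}$ has basis constant $\le\prod_{l<m}(1+\delta_l)$; (3) for each ``live'' node there is attached a weak$^\ast$-open $U\subseteq K$ and an uncountable $\epsilon$-separated set inside $R^\ast(U)$; (4) the finite-dimensional spaces $\spn\{w_{\tau(l)}\}_{l<m}$ and $\spn\{QRw_{\tau(l)}\}_{l<m}$ carry fixed finite $\delta_m$-nets of their unit spheres. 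At stage $m$, using that the $\epsilon$-separated set attached to the parent $\tau(m)^-$ (or to $\emptyset$ if $\tau(m)$ is a root) is uncountable, a pigeonhole/Szlenk derivation argument produces a vector $w_{\tau(m)}\in S_W$ and a functional-value $c_{\tau(m)}>\epsilon/(2+\theta)$ together with refined live slices for each child of $\tau(m)$; simultaneously, annihilate $w_{\tau(m)}$ against all previously chosen $z_{\tau(l)}^\ast$ with $\tau(l)\not\sqsubseteq\tau(m)$ by first intersecting with the (finite-codimensional, hence still ``uncountably heavy'') preannihilator, and perturb $w_{\tau(m)}$ off the earlier $\delta_m$-net to secure the Mazur-type basic-sequence estimate, simultaneously in $W$ (for (iii)) and in $Z/Z_0$ via the Johnson--Rosenthal quotient trick (for (iv)). Finally, for each infinite branch $b\in\partial\tee$, set $z_b^\ast$ to be a weak$^\ast$-cluster point of $(z_{b\restriction n}^\ast)_n$ in the compact set $K$; coarse-wedge-to-weak$^\ast$ continuity at branch points (clause (ii)) follows because a basic coarse-wedge neighbourhood of $b$ is $\dubyoo_{\overline{\tee}}(b\restriction n,\eff)$ for large $n$, on which all $z_t^\ast$ are weak$^\ast$-close by construction, and continuity at nodes $t\in\tee$ is automatic since such $t$ are isolated-from-below in the relevant sense (their predecessor set has successor or zero order type, so Proposition~\ref{coarsefacts} applies).

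To close the argument I would verify \eqref{biorslogger} in full: the ``$s\sqsubseteq' t$'' case with $t\in\tee$ is inductive; for $t=b\in\partial\tee$ it passes to the weak$^\ast$-limit, using that $\langle z_b^\ast,Rw_s\rangle=\lim_n\langle z_{b\restriction n}^\ast,Rw_s\rangle=\langle z_s^\ast,Rw_s\rangle$ for $n$ past the level of $s$ (the sequence is eventually constant there), and the strict inequality $c_s>\epsilon/(2+\theta)$ is preserved; the ``$s\not\sqsubseteq' t$'' case is the annihilation built in at stage $\max$. For (iv), note $Q^\ast$ identifies $(Z/Z_0)^\ast$ isometrically weak$^\ast$ with $[z_t^\ast\mid t\in\tee]^{w^\ast}$ (the fact quoted in Section~\ref{backgrounder}), so $(QRw_{\tau(m)})$ spans a dense subspace and the net estimates give the basis constant; that it is a Schauder basis (not merely a basic sequence with dense span) uses completeness of $Z/Z_0$ together with the uniform finite-codimensional control. \textbf{The main obstacle} I anticipate is the bookkeeping that makes the \emph{single} recursion serve all four conclusions at once: one must choose $w_{\tau(m)}$ to (i) separate values on the children-slices by the sharp constant $\epsilon/(2+\theta)$, (ii) be exactly annihilated by the earlier off-branch functionals, and (iii) miss a prescribed finite net in two different finite-dimensional spaces simultaneously --- and do all this while keeping uncountably much $\epsilon$-separated mass alive for later stages. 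Reconciling the sharp constant (which forces one to exploit absolute convexity of $K$ rather carefully, rescaling the relevant slices of $K$ through the origin) with the perturbation freedom needed for the Mazur/Johnson--Rosenthal estimates is the delicate point; I expect this is handled exactly as in \cite{PAHBSzlenkLarge} by working with the Szlenk derivation of $K$ relative to $R$ and noting that each derivation step removes only a weak$^\ast$-open, hence ``small'', piece, leaving a finite-codimensional-perturbation-stable residue.
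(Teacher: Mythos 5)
Your outline matches the paper's architecture (recursion along $\tau$, Mazur's technique for (iii), Johnson--Rosenthal for (iv), weak$^\ast$ limits along branches for (ii)), but two of the load-bearing ideas are missing or misdescribed. First, the exact equality in \eqref{biorslogger} is not obtained by keeping ``uncountably heavy'' slices alive and symmetrizing: a new functional chosen from a small weak$^\ast$-slice of $K$ only takes \emph{approximately} the right values on the previously chosen vectors, which is precisely Stegall's Theorem~\ref{doodah} and not the claim here. The paper's mechanism is to perform a Cantor--Bendixson reduction once (writing the separated set as $L\cup L''$ with $L$ weak$^\ast$-dense-in-itself), pick the new point $z_2^\ast\in L$ in a carefully chosen neighbourhood $\yoo$ of its parent, and then \emph{correct} it by subtracting an explicit linear combination of the earlier successive differences $f_{\tau(l)}^\ast-f_{\tau(l)^-}^\ast$ so that biorthogonality holds exactly. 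Absolute convexity of $K$ enters exactly here: it guarantees the corrected functionals stay in $(1+\nu)K$ (so that rescaling by $(1+\nu)^{-1}$ lands them in $K$), not in making ``the gap $\epsilon$ rather than $\epsilon/2$ available.'' Your proposal never explains how approximate becomes exact, which is the whole point of the theorem relative to Theorem~\ref{doodah}.

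Second, you assert that a pigeonhole argument produces $w_{\tau(m)}\in S_W$ witnessing separation $>\epsilon/(2+\theta)$ while simultaneously lying in the annihilator of all earlier functionals $R^\ast f_{\tau(j)}^\ast$ and of the Mazur norming functionals $h_g^\ast$. A generic finite-codimensional subspace need not see any definite fraction of $\Vert R^\ast z_2^\ast-R^\ast z_1^\ast\Vert$; the paper needs Lemma~\ref{lowerlemma} together with the extra constraint (the neighbourhood $\yoo_6$) that $R^\ast z_2^\ast-R^\ast z_1^\ast$ nearly annihilates the prescribed norming vectors $y_{f^\ast}$, which is what recovers the factor $1/(2+\nu)$ on $F_\perp$. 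You flag this as ``the delicate point'' but supply no argument. Relatedly, clause (iv) is not secured by perturbing $w_{\tau(m)}$ off a net in $Z/Z_0$: the paper instead controls the \emph{functionals} (conditions (V) and (VI), a Helly-plus-gliding-hump argument against a dense sequence in $S_Z$) so that the normalized differences $v_m$ form a basic sequence in $Z^\ast$, and then identifies $Z/Z_0$ with $[(v_m^\ast)_m]$ and computes $QRw_{\tau(m)}$ as a scalar multiple of $v_m^\ast$. Finally, continuity of $t\mapsto z_t^\ast$ at interior nodes $t\in\tee$ is not ``automatic'': in $\Omega$ every node has infinitely many children, so one needs the summable quantitative control $d(z_{\tau(m)}^\ast,z_{\tau(m)^-}^\ast)<2^{-m}$ and a finite exceptional set $\eff\subseteq\tee[t+]$ chosen so that the remaining children have large $\tau^{-1}$-index; similarly, at branches one needs the sequence to be $d$-Cauchy (so a genuine limit), not merely to have a cluster point.
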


Notice that (\ref{biorslogger}) implies that the map $t\mapsto z_t^\ast$ in Theorem~\ref{scoopie} is one-to-one, and therefore a coarse-wedge-to-weak$^\ast$ homeomorphism. In the particular case that $\tee=\dee$ (the infinite dyadic tree of finite sequences in $\{0,1\}$) we have that $\partial\dee$ is closed in $\overline{\dee}$ (since $\dee[t+]$ is finite for every $t\in\dee$) and homeomorphic to $\{ 0,1\}^\omega$. Indeed, the map from $\{ 0,1\}^\omega$ to $\partial\dee$ that maps each $s\in\{ 0,1\}^\omega$ to the set $\{ s|_n\mid n<\omega\}$ of all (finite) initial segments of $s$ is such a homeomorphism. In particular, and in a similar vein to Theorem~\ref{doodah}, Theorem~\ref{scoopie} provides a construction of a norm discrete, weak$^\ast$-homeomorphic copy of the Cantor set inside $K$. It is equality (rather than approximate equality with error depending on $s$) at (\ref{biorslogger}) that allows us to obtain a version of Theorem~\ref{doodah} with $\epsilon=0$.

Our proof of Theorem~\ref{scoopie} requires the following result, which is Lemma~2.2 of \cite{Dilworth2016}.
\begin{lemma}\label{lowerlemma}
Let $X$ be a Banach space, $\nu>0$ a real number, $F$ a finite dimensional subspace of $X^\ast$, $A$ a $\frac{\nu}{4+2\nu}$-net in $S_F$ and $\{ y_{f^\ast}\mid f^\ast\in A\}\subseteq S_X$ a family such that $\inf\{ \vert f^\ast(y_{f^\ast})\vert\mid f^\ast\in A\}\geq \frac{4+\nu}{4+2\nu}$. Then for every $x^\ast\in \{ y_{f^\ast}\mid f^\ast\in A\}^\perp$ we have $ \sup\{ \vert x^\ast (y)\vert\mid y\in B_{F_\perp} \}\geq \frac{1}{2+\nu}\Vert x^\ast\Vert$.
\end{lemma}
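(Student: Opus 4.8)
The plan is to reduce the assertion to an elementary distance estimate in $X^\ast$, and then to verify that estimate directly from the net hypothesis. Since $F$ is finite dimensional it is weak$^\ast$-closed, so $(F_\perp)^\perp=F$. Fix $x^\ast\in\{y_{f^\ast}\mid f^\ast\in A\}^\perp$ and let $S:=\sup\{\,|x^\ast(y)|\mid y\in B_{F_\perp}\,\}=\Vert x^\ast|_{F_\perp}\Vert$. By Hahn--Banach choose $\tilde x^\ast\in X^\ast$ extending $x^\ast|_{F_\perp}$ with $\Vert\tilde x^\ast\Vert=S$; then $x^\ast-\tilde x^\ast$ vanishes on $F_\perp$, hence $x^\ast-\tilde x^\ast\in(F_\perp)^\perp=F$, so $\mathrm{dist}(x^\ast,F)\le\Vert\tilde x^\ast\Vert=S$. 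Consequently it is enough to prove $\Vert x^\ast-f\Vert\ge\frac{1}{2+\nu}\Vert x^\ast\Vert$ for every $f\in F$.

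The substance of the argument is the case $f\ne 0$; the case $f=0$ is immediate since $2+\nu>1$. Set $\hat f=f/\Vert f\Vert\in S_F$ and, using that $A$ is a $\frac{\nu}{4+2\nu}$-net in $S_F$, choose $g^\ast\in A$ with $\Vert\hat f-g^\ast\Vert\le\frac{\nu}{4+2\nu}$. Since $x^\ast(y_{g^\ast})=0$, $\Vert y_{g^\ast}\Vert=1$, and $|g^\ast(y_{g^\ast})|\ge\frac{4+\nu}{4+2\nu}$ by hypothesis,
\[
\Vert x^\ast-f\Vert\ \ge\ \bigl|(x^\ast-f)(y_{g^\ast})\bigr|\ =\ \Vert f\Vert\,\bigl|\hat f(y_{g^\ast})\bigr|\ \ge\ \Vert f\Vert\Bigl(\tfrac{4+\nu}{4+2\nu}-\tfrac{\nu}{4+2\nu}\Bigr)\ =\ \tfrac{2}{2+\nu}\,\Vert f\Vert .
\]
Together with the trivial bound $\Vert x^\ast-f\Vert\ge\Vert x^\ast\Vert-\Vert f\Vert$, a case split on whether $\Vert f\Vert\ge\tfrac12\Vert x^\ast\Vert$ or $\Vert f\Vert<\tfrac12\Vert x^\ast\Vert$ gives $\Vert x^\ast-f\Vert\ge\frac{1}{2+\nu}\Vert x^\ast\Vert$ in both cases (the first via the displayed estimate, the second using $2+\nu\ge 2$). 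Taking the infimum over $f\in F$ and combining with the reduction finishes the proof.

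I do not expect a genuine obstacle; the only delicate points are the bookkeeping of the constants $\tfrac{\nu}{4+2\nu}$, $\tfrac{4+\nu}{4+2\nu}$ and $\tfrac{2}{2+\nu}$, and the degenerate case $F_\perp=\{0\}$. In that case $X$ is finite dimensional with $F=X^\ast$, and applying the displayed estimate with $f=x^\ast$ forces $0\ge\tfrac{2}{2+\nu}\Vert x^\ast\Vert$, whence $x^\ast=0$ and the claimed inequality holds trivially.
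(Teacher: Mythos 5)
Your proof is correct. Note that the paper does not prove this lemma at all --- it simply cites Lemma~2.2 of Dilworth--Kutzarova--Lancien--Randrianarivony --- and your argument (reduce $\sup\{|x^\ast(y)|\mid y\in B_{F_\perp}\}$ to $\mathrm{dist}(x^\ast,F)$ via Hahn--Banach and $(F_\perp)^\perp=F$ for the weak$^\ast$-closed finite-dimensional $F$, then bound the distance from below using the net together with a case split on $\Vert f\Vert$ versus $\tfrac12\Vert x^\ast\Vert$) is the standard route to such statements, with the constants checking out exactly as you computed.
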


The following result seems to be folklore; we refer the reader to \cite{PAHBSzlenkLarge} for a proof.
\begin{lemma}\label{zippinbase}
Let $X$ be a Banach space and $x^\ast\in X^\ast$. Then \[ \{ x^\ast + \epsilon B_{X^\ast}^\circ + C^\perp \mid \epsilon>0,\, C\subseteq X, \, \vert C\vert <\infty\} \] is a local base for the weak$^\ast$ topology of $X^\ast$ at $x^\ast$.
\end{lemma}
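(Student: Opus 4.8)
The plan is to compare the displayed family with the usual basic neighbourhoods of $x^\ast$ for the weak$^\ast$ topology, namely the sets
\[ V(x_1,\dots,x_n;\eta):=\{\, y^\ast\in X^\ast \mid \vert\langle y^\ast-x^\ast,x_i\rangle\vert<\eta \text{ for }1\leq i\leq n\,\},\qquad x_1,\dots,x_n\in X,\ \eta>0, \]
which (by definition of the weak$^\ast$ topology as the initial topology of the evaluation maps $y^\ast\mapsto\langle y^\ast,x\rangle$) form a local base for the weak$^\ast$ topology at the fixed point $x^\ast$. It then suffices to prove two inclusions: (a) every member $x^\ast+\epsilon B_{X^\ast}^\circ+C^\perp$ of the displayed family contains some $V(x_1,\dots,x_n;\eta)$, and so is a weak$^\ast$-neighbourhood of $x^\ast$; and (b) every $V(x_1,\dots,x_n;\eta)$ contains some member $x^\ast+\epsilon B_{X^\ast}^\circ+C^\perp$.

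For (a), fix finite $C\subseteq X$ and $\epsilon>0$. If $C=\emptyset$ then $C^\perp=X^\ast$ and the set is all of $X^\ast$, so there is nothing to prove; otherwise replace $C$ by a maximal linearly independent subset, which leaves $C^\perp=(\spn C)^\perp$ unchanged, and write $C=\{x_1,\dots,x_n\}$ with the $x_i$ linearly independent. By the Hahn--Banach theorem choose $y_1^\ast,\dots,y_n^\ast\in X^\ast$ with $\langle y_i^\ast,x_j\rangle=1$ if $i=j$ and $0$ otherwise, and put $\eta:=\epsilon\big(1+\sum_{i=1}^n\Vert y_i^\ast\Vert\big)^{-1}$. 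Given $y^\ast\in V(x_1,\dots,x_n;\eta)$, set $u:=\sum_{i=1}^n\langle y^\ast-x^\ast,x_i\rangle\, y_i^\ast$; then $\langle y^\ast-x^\ast-u,x_j\rangle=0$ for each $j$, so $y^\ast-x^\ast-u\in C^\perp$, while $\Vert u\Vert\leq\big(\max_i\vert\langle y^\ast-x^\ast,x_i\rangle\vert\big)\sum_i\Vert y_i^\ast\Vert<\epsilon$. Hence $y^\ast=x^\ast+u+(y^\ast-x^\ast-u)\in x^\ast+\epsilon B_{X^\ast}^\circ+C^\perp$, giving $V(x_1,\dots,x_n;\eta)\subseteq x^\ast+\epsilon B_{X^\ast}^\circ+C^\perp$.

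For (b), given $V(x_1,\dots,x_n;\eta)$, take $C:=\{x_1,\dots,x_n\}$ and $\epsilon:=\eta\big(1+\max_i\Vert x_i\Vert\big)^{-1}$. Any element of $x^\ast+\epsilon B_{X^\ast}^\circ+C^\perp$ has the form $x^\ast+u+v$ with $\Vert u\Vert<\epsilon$ and $v\in C^\perp$, so for each $i$ we have $\langle (x^\ast+u+v)-x^\ast,x_i\rangle=\langle u,x_i\rangle$ (as $\langle v,x_i\rangle=0$) and $\vert\langle u,x_i\rangle\vert\leq\Vert u\Vert\,\Vert x_i\Vert<\eta$; thus $x^\ast+u+v\in V(x_1,\dots,x_n;\eta)$. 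Combining (a) and (b), the displayed family is both contained in, and cofinal in, the weak$^\ast$ neighbourhood filter of $x^\ast$, i.e.\ it is a local base.

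Both inclusions are elementary; the only genuine input is the Hahn--Banach step in (a) producing the biorthogonal functionals $y_i^\ast$, which is why the reduction to a linearly independent $C$ (so that such functionals exist) is the one point deserving care. Beyond that, the argument is just bookkeeping of the constants linking $\epsilon$ and $\eta$.
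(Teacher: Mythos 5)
Your proof is correct. The paper itself does not prove this lemma (it is labelled folklore and the proof is deferred to the reference \cite{PAHBSzlenkLarge}), so there is no internal argument to compare against; your two-sided comparison with the standard subbasic weak$^\ast$ neighbourhoods $V(x_1,\dots,x_n;\eta)$, using Hahn--Banach biorthogonal functionals for the containment $V(x_1,\dots,x_n;\eta)\subseteq x^\ast+\epsilon B_{X^\ast}^\circ+C^\perp$ after reducing $C$ to a linearly independent set, is exactly the expected argument, and the constants linking $\epsilon$ and $\eta$ check out in both directions.
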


\begin{proof}[Proof of Theorem~\ref{scoopie}]
Fix $\delta>0$, $\theta>0$ and $\tau:\omega\longrightarrow \tee$ a bijection such that $\tau(l)\sqsubseteq\tau(m)$ implies $l\leq m$. Let $L'\subseteq K$ be uncountable and such that $R^\ast(L')$ is $\epsilon$-separated. We shall use small perturbations to construct a family $(f_t^\ast)_{t\in\tee}\subseteq Z^\ast$, weak$^\ast$-homeomorphic to $\tee$ with its coarse wedge topology, that `almost' lies inside $L'$ and which satisfies a condition similar to (\ref{biorslogger}). Points $f_t^\ast$, $t\in\partial\tee$, are then defined by continuous extension of the map $t\mapsto f_t^\ast$ to all of $\overline{\tee}$. The family $(z_t^\ast)_{t\in\overline{\tee}}$ in the statement of the theorem is then obtained by multiplying $(f_t^\ast)_{t\in\overline{\tee}}$ pointwise by a suitable scalar. Lemma~\ref{lowerlemma} and Lemma~\ref{zippinbase} above will play a key role in establishing the existence of the desired family $(w_t)_{t\in\tee}\subseteq W$ from the statement of the theorem. Our construction of the families $(f_t^\ast)_{t\in\tee}$ and $(w_t)_{t\in\tee}$ will be by induction with respect to the well-ordering of $\tee$ induced by $\tau$. 

We now make some preliminary definitions and observations that are required in order to state the key construction of the proof in full detail at (I)-(VI) below. Removing one point from $L'$ if necessary, we may assume that $\Vert R^\ast z^\ast\Vert >\epsilon/2$ for every $z^\ast\in L'$. By Theorem~8.5.2 of \cite{Semadeni1971} we may write $L'=L\cup L''$, where $L$ is weak$^\ast$-dense-in-itself and $L''$ is scattered in the weak$^\ast$ topology. Set $\nu=\theta(10 +3\theta)^{-1}$, so that \[\frac{1-3\nu}{(2+\nu)(1+\nu)}\geq\frac{1}{2+\theta},\] and let $(\delta_m)_{m=0}^\infty\subseteq(0,1)$ be a sequence of positive real numbers such that $\sum_{m=0}^\infty\delta_m<\infty$ and $\prod_{m=0}^\infty(1-\delta_m)\geq(1+\delta)^{-1}$.
Let $(z_p)_{p=0}^\infty$ be a norm-dense sequence in $S_Z$ and let $d$ be the metric on $Z^\ast$ given by setting $d(y^\ast,z^\ast)=\sum_{p=0}^\infty 2^{-p-1}\vert \langle y^\ast-z^\ast,z_p\rangle\vert$ for $y^\ast,z^\ast\in Z^\ast$. As is well-known (see, e.g., Proposition~3.22 of \cite{Fabian2001}), on bounded subsets of $Z^\ast$ the topology induced by $d$ coincides with the weak$^\ast$ topology. Moreover, $d(y^\ast,z^\ast)\leq\Vert y^\ast-z^\ast\Vert$ for all $y^\ast,z^\ast\in Z^\ast$. 

We now outline the key construction of the proof. Proceeding by induction over $m<\omega$, we shall construct families $(f_{\tau(m)}^\ast)_{m<\omega}\subseteq (1+\nu)K$ and $(w_{\tau(m)})_{m<\omega}\subseteq S_W$ and a strictly increasing sequence $(p_m)_{m=0}^\infty$ in $\omega$ so that, with $v_m:= \Vert f_{\tau(m)}^\ast - f_{\tau(m)^-}^\ast\Vert^{-1}(f_{\tau(m)}^\ast - f_{\tau(m)^-}^\ast)$ for each $m\in[1,\omega)$ and with $v_0:=f_{\tau(0)}^\ast$, the following conditions hold for all $m<\omega$:
\begin{itemize}
\item[(I)]
$f_{\tau(m)}^\ast\in(\nu\sum_{j=1}^m 2^{-j})K+L\subseteq (1+\nu)K$;
\item[(II)] If $m>0$ then $d(f_{\tau(m)}^\ast,f_{\tau(m)^-}^\ast)<2^{-m}$;
\item[(III)] For all $i,j\leq m$,
\begin{equation}\label{mutellanan}
\langle f_{\tau(j)}^\ast ,Rw_{\tau(i)}\rangle=\begin{cases}
\langle f_{\tau(i)}^\ast,Rw_{\tau(i)}\rangle
>\frac{(1-3\nu)\epsilon}{2+\nu}&\mbox{if }{\tau(i)}\sqsubseteq {\tau(j)}\\ 0 &\mbox{if }{\tau(i)}\not\sqsubseteq {\tau(j)}
\end{cases};
\end{equation}
\item[(IV)] For all $w\in \spn\{ w_{\tau(j)}\mid 0\leq j< m\}$ and scalars $a$ we have $\Vert w+ a w_{\tau(m)}\Vert \geq (1-\delta_m)\Vert w\Vert$;
\item[(V)] For each $v^\ast \in [(v_j)_{j=0}^m]^\ast$ with $\Vert v^\ast\Vert \leq1$ there is a natural number $p\leq p_m$ such that $\vert \langle v,z_p\rangle - \langle v^\ast,v\rangle\vert \leq \delta_m/3$ for all $v\in [(v_j)_{j=0}^m]$; and,
\item[(VI)] If $m>0$ then $\vert \langle v_m, z_p\rangle\vert <\delta_m/3$ for all $p\leq p_{m-1}$. 
\end{itemize}

We defer the construction of the families $(f_{\tau(m)}^\ast)_{m<\omega}$, $(w_{\tau(m)})_{m<\omega}$ and $(p_m)_{m=0}^\infty$ and will now show how to prove Theorem~\ref{scoopie} under the assumption that the families $(f_{\tau(m)}^\ast)_{m<\omega}$, $(w_{\tau(m)})_{m<\omega}$ and $(p_m)_{m=0}^\infty$ satisfying (I)-(VI) have already been constructed. Our first step is to define the family $(z_t^\ast)_{t\in\overline{\tee}}\subseteq  K$. To this end for each $b\in \partial\tee$ and $n<\omega$ let $b_n$ denote the element of $b$ for which the order type of $\tee[\sqsubset b_n]$ is $n$ and let $f_b^\ast$ denote the weak$^\ast$ limit of the sequence $(f_{b_n}^\ast)_{n=0}^\infty\subseteq (1+\nu) K$, which is Cauchy with respect to $d$ since (II) holds for all $m<\omega$. For $t\in\tee$ and $b\in\partial\tee$ it follows from the fact that (III) holds for all $m<\omega$ that
\begin{equation}\label{likegreece}
\langle f_b^\ast, Rw_t\rangle = \begin{cases}
\langle f_t^\ast, Rw_t\rangle>\frac{(1-3\nu)\epsilon}{2+\nu},&\mbox{if }t\sqsubseteq' b\\
0,&\mbox{if }t\not\sqsubseteq'b
\end{cases}.
\end{equation}
Let $z_t^\ast=\frac{1}{1+\nu} f_t^\ast$ for each $t\in\overline{\tee}$. Then (\ref{biorslogger}) follows from (\ref{mutellanan}) and (\ref{likegreece}); in particular, assertion (i) of the theorem holds. 

Let $\Xi:\overline{\tee}\longrightarrow Z^\ast$ be the map given by setting $\Xi(t)=z_t^\ast$ for each $t\in\overline{\tee}$. To prove (ii) we want to show that $\Xi$ is coarse-wedge-to-weak$^\ast$ continuous. To this end first note that for $0<m<\omega$ we have $d(z_{\tau(m)}^\ast,z_{\tau(m)^-}^\ast)<2^{-m}/(1+\nu)<2^{-m}$ by (II). For $m,m'<\omega$ satisfying $\tau(m)\sqsubset\tau(m')$ we have $m< m'$, so by (II) it follows that for $m<\omega$ and $t\in\tee$ such that $\tau(m)\sqsubset t$ we have
\[
d(z_{\tau(m)}^\ast,z_t^\ast)\leq \sum_{s\in (\tau(m), t]}d(z_s^\ast,z_{s^-}^\ast)< \sum_{m''=m+1}^\infty2^{-m''}\leq 2^{-m}.
\]
Hence for $m<\omega$ and $b\in\partial\tee$ satisfying $\tau(m)\in b$ we have $d(z_{\tau(m)}^\ast,z_b^\ast)\leq2^{-m}$ since $z_b^\ast$ is the weak$^\ast$ limit of the sequence $(z_{b_n}^\ast)_{n=0}^\infty$, all but finitely many terms of which satisfy $d(z_{\tau(m)}^\ast,z_{b_n}^\ast)\leq2^{-m}$.

We are now ready to show that $\Xi$ is continuous at each point of $\overline{\tee}$. Fix $\lambda>0$ and let $t\in\overline{\tee}$. Let us first suppose that $t\in\tee$. Let $\eff\subseteq\tee[t+]$ be a finite set such that $s'\in\tee[t+]\setminus\eff$ implies $2^{-\tau^{-1}(s')+1}<\lambda$. For each $s\in\dubyoo_{\overline{\tee}}(t,\eff)\setminus\{ t\}$ let $s'$ denote the unique element of $\tee[t+]\cap\tee[\sqsubseteq s]$. Since $\dubyoo_{\overline{\tee}}(t,\eff)$ is a coarse-wedge neighbourhood of $t$ in $\overline{\tee}$ and since for every $s\in \dubyoo_{\overline{\tee}}(t,\eff)\setminus \{ t\}$, we have\[
d(z_t^\ast,z_s^\ast)\leq d(z_t^\ast,z_{s'}^\ast)+d(z_{s'}^\ast,z_s^\ast)< 2^{-\tau^{-1}(s')} +2^{-\tau^{-1}(s')} = 2^{-\tau^{-1}(s')+1}<\lambda,
\] we conclude that $\Xi$ is continuous at $t$ in this case. 

Now suppose that $b\in\partial\tee$ and let $m_0<\omega$ be so large that $2^{-m_0+1}<\lambda$. By definition, $\dubyoo_{\overline{\tee}}(b_{m_0},\emptyset)$ is a coarse wedge neighbourhood of $b$. Since $\tau^{-1}(b_{m_0})\geq m_0$ we have, for $s\in \dubyoo_{\overline{\tee}}(b_{m_0},\emptyset)$,
\[
d(z_b^\ast,z_s^\ast)\leq d(z_b^\ast,z_{b_{m_0}}^\ast)+d(z_{b_{m_0}}^\ast,z_s^\ast)\leq2^{-\tau^{-1}(b_{m_0})}+2^{-\tau^{-1}(b_{m_0})}\leq 2^{-m_0+1}<\lambda,
\]
hence $\Xi$ is continuous at $b$. We have now established the desired continuity of $\Xi$, so that assertion (ii) of the theorem is proved.

Assertion (iii) of the theorem follows from the Grunblum criterion and the fact that, since (IV) holds for all $m<\omega$, for $0\leq l\leq m<\omega$ and scalars $a_0,a_1,\ldots,a_m$ we have
\[
\Big\Vert \sum_{q=0}^la_qw_{\tau(q)}\Big\Vert \leq \frac{1}{\displaystyle\prod_{q=l+1}^m(1-\delta_q)}\Big\Vert \sum_{q=0}^ma_qw_{\tau(q)}\Big\Vert
\leq (1+\delta)\Big\Vert \sum_{q=0}^ma_qw_{\tau(q)}\Big\Vert\,.
\]

The main idea in the proof of (iv) is to apply the techniques set out in Johnson and Rosenthal's proof of Theorem~III.1 of \cite{Johnson1972}. Our first step is to show that $(v_m)_{m=1}^\infty$ is a basic sequence with basis constant no larger than $1+\delta$. To this end fix $m\in[1,\omega)$ and suppose $v\in[(v_q)_{q=1}^m]$ is such that $\Vert  v\Vert =1$. Choose $v^\ast\in[v_j]_{1\leq j\leq m}^\ast$ such that $\langle v^\ast ,v\rangle=1=\Vert v^\ast\Vert$ and choose $p\leq p_m$ so that (V) holds for $v^\ast$. Then $\vert\langle v,z_p\rangle\vert\geq 1-\delta_m/3$, hence for any scalar $a$ we have \begin{align*}
\Vert v+a v_{m+1}\Vert &\begin{cases} >1 & \mbox{if } \vert a\vert >2\\\geq \vert \langle v ,z_p\rangle + \langle a v_{m+1},z_p\rangle\vert\geq (1-\frac{\delta_m}{3})-\frac{2\delta_m}{3}& \mbox{if }\vert a\vert\leq2
\end{cases}\\
&\geq 1-\delta_m.
\end{align*}
It follows that $\Vert\sum_{q=1}^ma_qv_q\Vert\leq \frac{1}{1-\delta_m}\Vert \sum_{q=1}^{m+1}a_qv_q\Vert $ for any scalars $a_1,\ldots,a_m,a_{m+1}$. Thus for $1\leq l\leq m<\omega$ and any scalars $a_1,\ldots,a_m$ we have
\begin{equation}\label{steefryle}
\Big\Vert \sum_{q=1}^la_qv_q\Big\Vert\leq \frac{1}{\displaystyle\prod_{q=l+1}^m(1-\delta_q)}\Big\Vert \sum_{q=1}^ma_qv_q\Big\Vert
\leq (1+\delta)\Big\Vert \sum_{q=1}^ma_qv_q\Big\Vert.
\end{equation}
By Grunblum's criterion, $(v_m)_{m=1}^\infty$ is a basic sequence with basis constant no larger than $1+\delta$.

Let $(v_m^\ast)_{m=1}^\infty$ be the sequence of functionals in $[(v_m)_{m=1}^\infty]^\ast$ biorthogonal to $(v_m)_{m=1}^\infty$ and define $T: Z\longrightarrow [(v_m)_{m=1}^\infty]^\ast$ by $\langle Tz,v\rangle=\langle v,z\rangle$ for $z\in Z$ and $v\in[(v_m)_{m=1}^\infty]$. That is, $Tz=(\imath_Zz)|_{[(v_m)_{m=1}^\infty]}$ for each $z\in Z$. Note that $\ker(T)=\bigcap_{m=1}^\infty\ker(v_m)$. For convenience let $f_{\tau(0)^-}^\ast $ be defined to be the zero element of $Z^\ast$ (notwithstanding the fact that $\tau(0)^-$ is undefined), so that for each $t\in\tee$ we have \[ f_t^\ast=\sum_{s\sqsubseteq t}(f_s^\ast-f_{s^-}^\ast).\] Then we have
\[
\ker(T)=\bigcap_{m=1}^\infty\ker(v_m) = \bigcap_{t\in\Omega}\ker(f_t^\ast-f_{t^-}^\ast)=\bigcap_{t\in\Omega}\ker(f_t^\ast)=\bigcap_{t\in\Omega}\ker(z_t^\ast)=Z_0.
\] 
Following the argument in the proof of Theorem~III.1 of \cite{Johnson1972} yields the equality $T(Z) = [(v_m^\ast)_{m=1}^\infty]$ and the existence of a linear isometry $\overline{T}:Z/Z_0\longrightarrow  [(v_m^\ast)_{m=1}^\infty]$ such that $\overline{T}Qz=Tz$ for every $z\in Z$. By Fact 6.6 of \cite{Fabian2001}, for $m\in[1,\omega)$ we have
\begin{align*}
\overline{T}QRw_{\tau(m)} &=TRw_{\tau(m)}\\& = \sum_{m'=1}^\infty\langle TRw_{\tau(m)},v_{m'}\rangle v_{m'}^\ast\\&= \sum_{m'=1}^\infty\langle v_{m'},Rw_{\tau(m)}\rangle v_{m'}^\ast\\& =\frac{\langle f_{\tau(m)}^\ast,Rw_{\tau(m)}\rangle}{\Vert f_{\tau(m)}^\ast- f_{\tau(m)^-}^\ast \Vert}v_m^\ast.
\end{align*}
For each $m\in[1,\omega)$ let $a_m= \langle f_{\tau(m)}^\ast,Rw_{\tau(m)}\rangle\Vert f_{\tau(m)}^\ast- f_{\tau(m)^-}^\ast \Vert^{-1}$, so that $\overline{T}QRw_{\tau(m)}=a_mv_m^\ast$ for each such $m$. As $\overline{T}$ is an isometry, $(QRw_{\tau(m)})_{m=1}^\infty$ is a basis for $Z/Z_0$ isometrically equivalent to $(a_mv_m^\ast)$, whose basis constant coincides with the basis constant of $(v_m^\ast)_{m=1}^\infty$, which coincides with the basis constant of $(v_m)_{m=1}^\infty$, which is no larger than $1+\delta$ (as shown above). This completes the proof of (iv), and so to complete the proof of Theorem~\ref{scoopie} it remains only to carry out the inductive construction of the families $(f_{\tau(m)}^\ast)_{m<\omega}$, $(w_{\tau(m)})_{m<\omega}$ and $(p_m)_{m=0}^\infty$ so that (I)-(VI) above hold for all $m<\omega$. 

We begin the induction by letting $f_{\tau(0)}^\ast$ be an arbitrary point in $L$ and choosing $w_{\tau(0)}\in S_W$ such that $\langle R^\ast f_{\tau(0)}^\ast ,w_{\tau(0)}\rangle >\epsilon/2$. Define $v_0:= f_{\tau(0)}^\ast$. By Helly's theorem, by the density of $(z_p)_{p=1}^\infty$ in $S_Z$, and by the total boundedness of $S_{[\{v_0\}]}$ and $S_{[\{v_0\}]^\ast}$, we may choose $p_0<\omega$ large enough that (V) holds for $m=0$. It is now easily checked that (I)-(VI) hold for $m=0$.

Fix $k<\omega$ and suppose that $f_{\tau(m)}^\ast\in(\nu\sum_{j=1}^m 2^{-j})K+L$, $w_{\tau(m)}\in S_W$ and $p_m<\omega$ have been defined for $1\leq m\leq k$ in such a way that $(p_m)_{m=1}^k$ is strictly increasing and properties (I)-(VI) are satisfied for $0\leq m\leq k$. To carry out the inductive step of the proof, we now show how to construct $f_{\tau(k+1)}^\ast\in(\nu\sum_{j=1}^{k+1} 2^{-j})K+L$, $w_{\tau(k+1)}\in S_W$ and $p_{k+1}\in (p_k,\omega)$ so that (I)-(VI) are satisfied for $0\leq m\leq k+1$. To this end let $v^\ast\in (\nu\sum_{j=1}^{k} 2^{-j})K$ and $z_1^\ast\in L$ be such that $f_{\tau(k+1)^-}^\ast=v^\ast+z_1^\ast$. With a view to applying Lemma~\ref{lowerlemma}, let $G$ be a finite $\delta_{k+1}$-net in $S_{\spn\{ w_{\tau(i)}\mid 0\leq i\leq k \}}$ and for each $g\in G$ let $h_g^\ast\in W^\ast$ be such that $\langle h_g^\ast ,g\rangle=1$. Set \[ F= \spn\big( \{ R^\ast f_{\tau(j)}^\ast\mid0\leq j\leq k \} \cup \{ h_g^\ast \mid g\in G\} \big)\subseteq W^\ast,\] let $A$ be a finite $\frac{\nu}{4+2\nu}$-net in $S_F$ and let $\{ y_{f^\ast}\mid f^\ast\in A \}\subseteq S_W$ be such that $\langle f^\ast,y_{f^\ast}\rangle\geq \frac{4+\nu}{4+2\nu}$ for each $f^\ast\in A$. Let $e\in\real$ be such that
\[
ke \frac{(2+\nu)}{(1-3\nu)\epsilon}2(1+\nu)\sup\{ \Vert z^\ast\Vert \mid z^\ast \in K\}=\frac{1}{2} \frac{\delta_{k+1}}{3}\frac{(1-3\nu)\epsilon}{(2+\nu)\Vert R\Vert}
\]
and define
\begin{align*}
\yoo_1&:=\bigcap_{i=0}^k\Big\{ z^\ast\in Z^\ast\,\,\Big\vert\,\, \vert \langle z^\ast - z_1^\ast,Rw_{\tau(i)}\rangle\vert<\frac{2^{-k-2}\nu(1-3\nu)\epsilon}{k(2+\nu)(1+\nu)} \Big\};\\
\yoo_2&:=\bigcap_{i=0}^k\Big\{ z^\ast\in Z^\ast\,\,\Big\vert\,\, \vert \langle z^\ast - z_1^\ast,Rw_{\tau(i)}\rangle\vert<\frac{2^{-k-3}(1-3\nu)\epsilon}{k(2+\nu)(1+\nu)\sup_{z^\ast\in K}\Vert z^\ast\Vert} \Big\};\\
\yoo_3&:=\bigcap_{i=0}^k\{ z^\ast\in Z^\ast\mid \vert \langle z^\ast - z_1^\ast,Rw_{\tau(i)}\rangle\vert<e \};\\
\yoo_4&:=\bigcap_{p=0}^{p_k}\Big\{ z^\ast\in Z^\ast\,\,\Big\vert\,\, \vert \langle z^\ast - z_1^\ast,z_p\rangle\vert< \frac{1}{2}\frac{\delta_{k+1}}{3}\frac{(1-3\nu)\epsilon}{(2+\nu)\Vert R\Vert} \Big\};\\
\yoo_5&:=\{ z^\ast \in Z^\ast\mid d(z^\ast, z_1^\ast)<2^{-k-2}\};\\
\yoo_6&:=\big\{ z^\ast \in Z^\ast \mid R^\ast z^\ast\in R^\ast z_1^\ast + \frac{\epsilon\nu}{2+\nu}B_{W^\ast}^\circ+ \big(\{ w_{\tau(i)}\mid0\leq i\leq k \}\cup\{ y_{f^\ast}\mid f^\ast\in A\}\big)^\perp\big\};\mbox{ and,}\\
\yoo&:=\yoo_1\cap\yoo_2\cap\yoo_3\cap\yoo_4\cap\yoo_5\cap\yoo_6.
\end{align*}
$L\cap\yoo$ is a weak$^\ast$-neighbourhood of $z_1^\ast$ in $L$, hence $(L\cap\yoo)\setminus \{ z_1^\ast\}$ is nonempty since $L$ is weak$^\ast$-dense-in-itself. Choose $z_2^\ast \in (L\cap\yoo)\setminus \{ z_1^\ast\}$. We will ultimately use the fact that $z_2^\ast$ belongs to each of the neighbourhoods $\yoo_1,\dots,\yoo_6$ to show that (I)-(VI) are satisfied for $0\leq m\leq k+1$ with a suitable choice for $f_{\tau(k+1)}^\ast$.

Set $u^\ast=v^\ast +z_2^\ast$, so that $u^\ast\in (\nu\sum_{j=1}^k 2^{-j})K+L$. Since $z_2^\ast\in\yoo_6$ we may write $R^\ast z_2^\ast = R^\ast z_1^\ast+ y^\ast +x^\ast$, where $\Vert y^\ast\Vert <\nu\epsilon/(2+\nu)$ and \[x^\ast \in (\{ w_{\tau(i)}\mid 0\leq i\leq k \}\cup\{ y_{f^\ast}\mid f^\ast\in A\})^\perp.\] We then have \[ R^\ast u^\ast = R^\ast v^\ast+R^\ast z_2^\ast = R^\ast v^\ast+ R^\ast z_1^\ast+ y^\ast +x^\ast = R^\ast f_{\tau(k+1)^-}^\ast+ y^\ast +x^\ast.\] Since
\[
\Vert x^\ast\Vert \geq \Vert R^\ast u^\ast - R^\ast f_{\tau(k+1)^-}^\ast\Vert - \Vert y^\ast\Vert = \Vert R^\ast z_2^\ast - R^\ast z_1^\ast\Vert - \Vert y^\ast\Vert  > \epsilon-\frac{\nu\epsilon}{2+\nu} >(1-\nu)\epsilon,
\]
an application of Lemma~\ref{lowerlemma} with $F$, $A$ and $\{ y_{f^\ast}\mid f^\ast\in A \}\subseteq S_W$ as defined above in the current proof yields $y\in S_{F_\perp}$ such that 
\[
\langle x^\ast ,y\rangle > \frac{1}{2+\nu}\cdot (1-\nu)\epsilon- \frac{\nu\epsilon}{2+\nu}= \frac{(1-2\nu)\epsilon}{2+\nu}.
\]
Since $y\in\ker(R^\ast f_{\tau(k+1)^-}^\ast)$ we have
\[
\vert \langle R^\ast u^\ast, y\rangle\vert = \vert \langle R^\ast u^\ast-R^\ast f_{\tau(k+1)^-}^\ast, y\rangle\vert = \vert \langle x^\ast +y^\ast, y\rangle\vert > \frac{(1-2\nu)\epsilon}{2+\nu} - \frac{\nu\epsilon}{2+\nu} =\frac{(1-3\nu)\epsilon}{2+\nu},
\]
hence $w_{\tau(k+1)}:= \vert \langle R^\ast u^\ast,y\rangle\vert \langle R^\ast u^\ast,y\rangle^{-1}y$ is well-defined, $w_{\tau(k+1)}\in S_{F_\perp}\subseteq S_W$, and 
\begin{equation}\label{pickaloo}
\langle u^\ast, Rw_{\tau(k+1)}\rangle=\langle R^\ast u^\ast, w_{\tau(k+1)}\rangle= \vert \langle R^\ast u^\ast, y\rangle\vert >\frac{(1-3\nu)\epsilon}{2+\nu}.
\end{equation}

We now define $f_{\tau(k+1)}^\ast$, recalling that $f_{\tau(0)^-}^\ast$ is defined above as the zero element of $Z^\ast$. Define
\begin{align}
f_{\tau(k+1)}^\ast:=\label{coffee1} u^\ast\, -&\sum_{l=0}^k \frac{\langle u^\ast -f_{\tau(k+1)^-}^\ast,Rw_{\tau(l)}\rangle}{\langle f_{\tau(l)}^\ast,Rw_{\tau(l)}\rangle}(f_{\tau(l)}^\ast -f_{\tau(l)^-}^\ast)\\ \label{coffee2}= u^\ast\, -&\sum_{l=0}^k \frac{\langle z_2^\ast-z_1^\ast,Rw_{\tau(l)}\rangle}{\langle f_{\tau(l)}^\ast,Rw_{\tau(l)}\rangle}(f_{\tau(l)}^\ast -f_{\tau(l)^-}^\ast),
\end{align}
noting that the equality of (\ref{coffee1}) and (\ref{coffee2}) follows from the fact that $u^\ast-f_{\tau(k+1)^-}^\ast=z_2^\ast-z_1^\ast$.

Since $u^\ast\in (\nu\sum_{j=1}^k 2^{-j})K+L$ we have $f_{\tau(k+1)}^\ast\in ((\nu\sum_{j=1}^k 2^{-j})K+L)+cK$ for some scalar $c>0$. Since $z_2^\ast \in \yoo_1$, it follows from the definition of $f_{\tau(k+1)}^\ast$ and the assumption that (I) and (III) hold for $m\leq k$ that the scalar $c$ may be taken to satisfy \[ c\leq k\frac{2^{-k-2}\nu(1-3\nu)\epsilon}{k(2+\nu)(1+\nu)}\Bigg(\frac{(1-3\nu)\epsilon}{2+\nu} \Bigg)^{-1}2(1+\nu)= \nu2^{-k-1}.\]
In particular, we have $f_{\tau(k+1)}^\ast\in(\nu\sum_{j=1}^{k+1} 2^{-j})K+L$, so that (I) holds for $m=k+1$.

To see that (II) holds for $m=k+1$, observe that, since $z_2^\ast \in \yoo_2$, a similar argument to that used to estimate the scalar $c$ in the previous paragraph yields
\[
\Vert f_{\tau(k+1)}^\ast-u^\ast\Vert  \leq \sum_{0\leq l\leq k} \frac{\vert \langle z_2^\ast-z_1^\ast,Rw_{\tau(l)}\rangle\vert}{\vert \langle f_{\tau(l)}^\ast ,Rw_{\tau(l)}\rangle\vert}2(1+\nu)\sup_{z^\ast\in K}\Vert z^\ast\Vert\leq2^{-k-2}.
\]
It follows from this estimate and the fact that $z_2^\ast \in\yoo_5$ that
\begin{align*}
d(f_{\tau(k+1)}^\ast,f_{\tau(k+1)^-}^\ast)&\leq d(f_{\tau(k+1)}^\ast,u^\ast) + d(u^\ast,f_{\tau(k+1)^-}^\ast)\\
&\leq \Vert f_{\tau(k+1)}^\ast-u^\ast\Vert + d(u^\ast-v^\ast,f_{\tau(k+1)^-}^\ast-v^\ast)\\
&\leq 2^{-k-2}+d(z_2^\ast,z_1^\ast)\\
&<2^{-k-2}+2^{-k-2}\\
&=2^{-k-1}.
\end{align*}
That is, (II) holds for $m=k+1$.

We now show that (III) holds for $m=k+1$. By the induction hypothesis, we need only prove the case where at least one of $i$ and $j$ is equal to $k+1$. Since $w_{\tau(k+1)}\in S_{F_\perp}\subseteq \bigcap_{j=0}^k\ker(R^\ast f_{\tau(j)}^\ast)$ we have $\langle f_{\tau(j)}^\ast,Rw_{\tau(k+1)}\rangle=0$ for $0\leq j\leq k$. Moreover, by the definition of $f_{\tau(k+1)}^\ast$, by (\ref{pickaloo}) and by the fact that $w_{\tau(k+1)}\in \bigcap_{j=0}^k\ker(R^\ast f_{\tau(j)}^\ast)$, we have 
\[
\langle f_{\tau(k+1)}^\ast,Rw_{\tau(k+1)}\rangle= \langle u^\ast,Rw_{\tau(k+1)}\rangle-0= \langle u^\ast,Rw_{\tau(k+1)}\rangle >\frac{(1-3\nu)\epsilon}{2+\nu}.\]
Since (III) holds for $0\leq m\leq k$, if $l,i\in[0,k]$ then
\begin{equation*}
\langle f_{\tau(l)}^\ast - f_{\tau(l)^-}^\ast,Rw_{\tau(i)}\rangle=\begin{cases} \langle f_{\tau(i)}^\ast ,Rw_{\tau(i)}\rangle,&\mbox{if }i=l\\0,&\mbox{if }i\neq l \end{cases}.
\end{equation*}
It follows that if $0\leq i\leq k$ then
\begin{align*}
\langle f_{\tau(k+1)}^\ast, Rw_{\tau(i)}\rangle&= \langle u^\ast,Rw_{\tau(i)}\rangle - \frac{\langle u^\ast-f_{\tau(k+1)^-}^\ast,Rw_{\tau(i)}\rangle}{\langle f_{\tau(i)}^\ast,Rw_{\tau(i)}\rangle}\langle f_{\tau(i)}^\ast ,Rw_{\tau(i)}\rangle \notag \\&=\langle f_{\tau(k+1)^-}^\ast,Rw_{\tau(i)}\rangle\\
&=\begin{cases}
\langle f_{\tau(i)}^\ast,Rw_{\tau(i)}\rangle >\frac{(1-3\nu)\epsilon}{2+\nu}& \mbox{if }\tau(i) \sqsubseteq' \tau(k+1)\\
0&\mbox{if }\tau(i) \perp \tau(k+1)
\end{cases}.
\end{align*}
We have now shown that (III) holds for $m=k+1$.

Next we show that (IV) holds for $m=k+1$. Let $a$ be a scalar and $w\in \spn\{ w_{\tau(i)}\mid 0\leq i\leq k\} $. To avoid triviality, assume $w\neq 0$. Let $g_w\in G$ be such that $\Vert \Vert w\Vert^{-1}w-g_w\Vert \leq \delta_{k+1}$. Since $\langle h_{g_w}^\ast, g_w\rangle = 1$ and $w_{\tau(k+1)}\in\ker(h_{g_w}^\ast)$ we have
\begin{align*}
\Vert w+a w_{\tau(k+1)}\Vert &\geq \vert \langle h_{g_w}^\ast, w+a w_{\tau(k+1)}\rangle\vert \\&= \vert \langle h_{g_w}^\ast, \Vert w\Vert^{-1}w\rangle\vert \Vert w\Vert\\& \geq \big(\vert \langle h_{g_w}^\ast, g_w\rangle\vert - \vert \langle h_{g_w}^\ast, \Vert w\Vert^{-1}w-g_w\rangle\vert\big) \Vert w\Vert \\&\geq (1-\delta_{k+1})\Vert w\Vert ,
\end{align*}
so that (IV) holds for $m=k+1$ as desired.

We now define $p_{k+1}$ in such a way that (V) holds. To this end define $v_{k+1}:= \Vert f_{\tau(k+1)}^\ast - f_{\tau(k+1)^-}^\ast\Vert^{-1}(f_{\tau(k+1)}^\ast-f_{\tau(k+1)^-}^\ast)$ and note that by Helly's theorem, by the density of $(z_p)_{p=0}^\infty$ in $S_Z$, and by the total boundedness of $S_{[(v_m)_{m=1}^{k+1}]}$ and $S_{[(v_m)_{m=1}^{k+1}]^\ast}$, we may choose $p_{k+1}>p_k$ large enough that (V) holds for $m=k+1$.

To complete the induction we now show that (VI) holds for $m=k+1$. Since (III) holds for $1\leq m\leq k+1$ and since $z_2^\ast \in \yoo_3$, it follows from the definition of $f_{\tau(k+1)}^\ast$ that
\[
\Vert f_{\tau(k+1)}^\ast -u^\ast\Vert \leq\frac{1}{2} \frac{\delta_{k+1}}{3}\frac{(1-3\nu)\epsilon}{(2+\nu)\Vert R\Vert}.
\]
Moreover, since $z_2^\ast \in \yoo_4$ and $u^\ast-f_{\tau(k+1)^-}^\ast=z_2^\ast-z_1^\ast$ we deduce that, for all $p\leq p_k$,
\begin{align}\label{justcaught}
\vert \langle f_{\tau(k+1)}^\ast -f_{\tau(k+1)^-}^\ast ,z_p\rangle\vert &\leq \Vert f_{\tau(k+1)}^\ast -u^\ast\Vert \Vert  z_p\Vert+\vert \langle u^\ast -f_{\tau(k+1)^-}^\ast ,z_p\rangle\vert \notag \\&\leq \frac{1}{2} \frac{\delta_{k+1}}{3}\frac{(1-3\nu)\epsilon}{(2+\nu)\Vert R\Vert}+ \frac{1}{2} \frac{\delta_{k+1}}{3}\frac{(1-3\nu)\epsilon}{(2+\nu)\Vert R\Vert} \notag\\& =\frac{\delta_{k+1}}{3}\frac{(1-3\nu)\epsilon}{(2+\nu)\Vert R\Vert}.
\end{align}
Since \begin{align*} \Vert f_{\tau(k+1)}^\ast -f_{\tau(k+1)^-}^\ast \Vert &\geq \Vert R^\ast f_{\tau(k+1)}^\ast -R^\ast f_{\tau(k+1)^-}^\ast \Vert \Vert R\Vert^{-1}\\& \geq \langle f_{\tau(k+1)}^\ast -f_{\tau(k+1)^-}^\ast, Rw_{\tau(k+1)}\rangle \Vert R\Vert^{-1}\\&> \frac{(1-3\nu)\epsilon}{(2+\nu)\Vert R\Vert},\end{align*} it follows from (\ref{justcaught}) that $\vert \langle v_{k+1},z_p\rangle\vert \leq \delta_{k+1}/3$ for all $p\leq p_k$. Thus (I)-(VI) hold for all $m<\omega$. The proof of Theorem~\ref{scoopie} is complete.
\end{proof}

\section{Subspaces and quotients with a basis}\label{basissection}

The following theorem was first established independently by Hagler \cite{Hagler1987} and Rosenthal \cite{Rosenthal1988}, who in doing so answered in the affirmative the following question posed by V. Zizler: {\em Suppose $X$ is a separable Banach space with $X^\ast$ nonseparable. Does $X$ contain a basic sequence whose closed linear span has non-separable dual?} The proof we offer here of the Hagler-Rosenthal result is a rather straightforward consequence of Theorem~\ref{scoopie}.

\begin{theorem}\label{secondbasistheorem}
Let $X$ be a separable Banach space with $X^\ast$ non-separable. Then for every $\delta>0$ there exists a subspace $Y\subseteq X$ such that $Y^\ast$ is non-separable and $Y$ has a basis with basis constant not exceeding $1+\delta$.
\end{theorem}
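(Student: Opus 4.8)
The plan is to apply Theorem~\ref{scoopie} to a suitable operator out of $X$, extract a subspace of $X$ from the vectors $(w_t)_{t\in\tee}$ it produces, and then verify that the dual of that subspace is non-separable by exhibiting an uncountable, norm-separated family of functionals on it. First I would arrange for the hypotheses of Theorem~\ref{scoopie} to be met: take $Z$ to be a fixed separable Banach space (for instance $C(\{0,1\}^\omega)$, or simply a separable space that contains an isometric copy of every separable space — actually it is cleaner to take $Z=X$ itself if $X$ were reflexive, but in general one should push $X$ into a separable superspace where the relevant set of functionals becomes weak$^*$-compact). Since $X^\ast$ is non-separable, $B_{X^\ast}$ is non-separable in the norm topology, and being weak$^*$-compact and absolutely convex it is a legitimate choice of $K$ once we realise $X$ as a subspace of a space $Z$ with the map $R$ a (metric-surjection-adjoint or embedding) operator; concretely, embed $X$ isometrically into $Z:=C(B_{X^\ast},w^\ast)$ via the evaluation map $R:X\to Z$, so that $R^\ast$ maps the weak$^*$-compact absolutely convex set $K:=B_{Z^\ast}$ onto (a set containing) $B_{X^\ast}$, and the latter contains an uncountable $\epsilon$-separated subset for some $\epsilon>0$ because a non-separable metric space contains, for some $\epsilon>0$, an uncountable $\epsilon$-separated set (by a standard argument sorting points into countably many "$\tfrac1n$-separated" classes and using uncountability). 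Take $\tee=\dee$, the dyadic tree, and any admissible bijection $\tau:\omega\to\dee$.

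Next I would invoke Theorem~\ref{scoopie} to obtain families $(w_t)_{t\in\dee}\subseteq S_X$ and $(z_t^\ast)_{t\in\overline{\dee}}\subseteq K$ satisfying (i)–(iv). Set $Y:=[w_{\tau(m)}\mid m<\omega]=[w_t\mid t\in\dee]$. By assertion (iii), $(w_{\tau(m)})_{m=0}^\infty$ is a basic sequence with basis constant at most $1+\delta$, so $Y$ has a basis with the required basis constant. It remains to show $Y^\ast$ is non-separable. For each infinite branch $b\in\partial\dee$, consider the functional $z_b^\ast|_Y\in Y^\ast$; by the biorthogonality relation \eqref{biorslogger}, for $s\in\dee$ we have $\langle z_b^\ast,Rw_s\rangle=\langle z_s^\ast,Rw_s\rangle>\tfrac{\epsilon}{2+\theta}$ if $s\sqsubseteq' b$ and $=0$ otherwise. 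Fix two distinct branches $b\neq b'$ of $\dee$ and let $t$ be a node lying on exactly one of them, say $t\sqsubseteq' b$ but $t\not\sqsubseteq' b'$; then $\langle z_b^\ast-z_{b'}^\ast, Rw_t\rangle=\langle z_t^\ast,Rw_t\rangle>\tfrac{\epsilon}{2+\theta}$, while $\|Rw_t\|=\|w_t\|=1$, so $\|(z_b^\ast-z_{b'}^\ast)|_Y\|\geq\|z_b^\ast-z_{b'}^\ast\|_{Z^\ast}\cdot$ — more precisely $\|z_b^\ast|_Y-z_{b'}^\ast|_Y\|_{Y^\ast}\geq |\langle z_b^\ast-z_{b'}^\ast,w_t\rangle|>\tfrac{\epsilon}{2+\theta}$ (identifying $w_t$ with $Rw_t$ under the isometric embedding). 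Since $\partial\dee$ has cardinality $2^{\aleph_0}$, the family $\{z_b^\ast|_Y\mid b\in\partial\dee\}$ is an uncountable $\tfrac{\epsilon}{2+\theta}$-separated subset of $Y^\ast$, whence $Y^\ast$ is non-separable.

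The main obstacle I anticipate is not in the application of Theorem~\ref{scoopie} but in the bookkeeping needed to \emph{set up} the hypotheses correctly — namely choosing the ambient separable space $Z$ and the operator $R$ so that $R^\ast(K)$ genuinely contains an uncountable $\epsilon$-separated subset of $X^\ast$ (equivalently, so that restricting functionals from $Z^\ast$ back to $X$ loses nothing). Using the canonical isometric embedding $X\hookrightarrow C(B_{X^\ast})$ makes $R^\ast:C(B_{X^\ast})^\ast\to X^\ast$ a quotient map, so $R^\ast(B_{C(B_{X^\ast})^\ast})\supseteq B_{X^\ast}$, and the existence of an uncountable norm-separated subset of the non-separable metric space $B_{X^\ast}$ is routine. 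One must also double-check that the separation of the branch functionals survives restriction to $Y$ — but this is immediate from \eqref{biorslogger} since the separating evaluation is witnessed by a unit vector $w_t$ that already lies in $Y$. With these points addressed the argument closes; assertion (iv) of Theorem~\ref{scoopie} is not needed here (it will be used instead for the quotient version, Theorem~\ref{yetanotherbasistheorem}).
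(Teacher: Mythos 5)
Your argument is correct and is essentially the paper's proof: apply Theorem~\ref{scoopie}, take $Y$ to be the closed linear span of the $w_t$, get the basis from assertion (iii), and witness non-separability of $Y^\ast$ by restricting the uncountably many branch functionals $z_b^\ast$, $b\in\partial\tee$, to $Y$, where they stay $\tfrac{\epsilon}{2+\theta}$-separated because the separating vector $w_{\min(b\setminus b')}$ lies in $S_Y$. The only superfluous step is your embedding of $X$ into $C(B_{X^\ast},w^\ast)$: your stated worry about weak$^\ast$-compactness is unfounded, since $B_{X^\ast}$ is weak$^\ast$-compact by Banach--Alaoglu regardless of reflexivity, so the paper simply applies Theorem~\ref{scoopie} with $W=Z=X$, $R=Id_X$ and $K=B_{X^\ast}$ (and with $\tee=\Omega$ rather than $\dee$, which makes no difference).
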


\begin{proof}
Let $\Omega$ be the tree of finite sequences of finite ordinals, as defined in Example~\ref{fullcounter}. Fix $\delta>0$ and $\tau:\omega \longrightarrow \Omega$ a bijection such that $\tau(m)\sqsubseteq\tau(m')$ implies $m\leq m'$. We apply Theorem~\ref{scoopie} with $W=Z=X$, $R=Id_X$, $K=B_{X^\ast}$, $\theta=1$, $\tee=\Omega$, and $\epsilon>0$ small enough that $B_{X^\ast}$ contains an uncountable $\epsilon$-separated subset, to obtain families $(z_t^\ast)_{t\in\overline{\Omega}}\subseteq B_{X^\ast}$ and $(w_t)_{t\in\Omega}\subseteq S_X$ such that 
\begin{equation*}
\langle z_t^\ast ,w_s\rangle=\begin{cases}
\langle z_s^\ast,w_s\rangle>\frac{\epsilon}{3}&\mbox{if }s\sqsubseteq t\\ 0 &\mbox{if }s\not\sqsubseteq t
\end{cases}, \quad s\in\Omega,\,t\in\overline{\Omega}\,.
\end{equation*}
and $(w_{\tau(m)})_{m=0}^\infty$ is a basic sequence with basis constant not exceeding $1+\delta$. Let $Y=[(w_t)_{t\in\Omega}]$ and for each $t\in\overline{\Omega}$ let $y_t^\ast= z_t^\ast|_Y$. For $t,t'\in\partial\Omega$ with $t\neq t'$ we have
\[
\Vert y_t^\ast - y_{t'}^\ast\Vert \geq \langle y_t^\ast - y_{t'}^\ast, w_{\min (t\setminus t')}\rangle = \langle z_t^\ast , w_{\min (t\setminus t')}\rangle - \langle z_{ t'}^\ast , w_{\min (t\setminus t')}\rangle >\frac{\epsilon}{3}-0 = \frac{\epsilon}{3}\,,
\]
hence $\{y_t^\ast\mid t\in\partial\Omega \}$ is an uncountable $\frac{\epsilon}{3}$-separated subset of $Y^\ast$.
\end{proof}

We now turn our attention to quotients of separable Banach spaces with non-separable dual. As far as we are aware, the following result is new.

\begin{theorem}\label{yetanotherbasistheorem}
Let $X$ be a separable Banach space with $X^\ast$ non-separable. Then for every $\delta>0$ there exists a subspace $Y\subseteq X$ such that $(X/Y)^\ast$ is non-separable and $X/Y$ has a basis with basis constant not exceeding $1+\delta$.
\end{theorem}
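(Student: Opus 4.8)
The plan is to deduce Theorem~\ref{yetanotherbasistheorem} directly from Theorem~\ref{scoopie} in essentially the same way that Theorem~\ref{secondbasistheorem} was deduced, but now exploiting part (iv) of Theorem~\ref{scoopie} (the quotient statement) rather than part (iii). First I would set $W=Z=X$, $R=Id_X$, $K=B_{X^\ast}$, $\theta=1$, $\tee=\Omega$, and choose $\epsilon>0$ small enough that $B_{X^\ast}$ contains an uncountable $\epsilon$-separated subset; such an $\epsilon$ exists because $X^\ast$ is non-separable (indeed a non-separable metric space always contains an uncountable $\epsilon$-separated set for some $\epsilon>0$). Fix also a bijection $\tau:\omega\longrightarrow\Omega$ with $\tau(l)\sqsubseteq\tau(m)\Rightarrow l\leq m$, as in Example~\ref{fullcounter}. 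Applying Theorem~\ref{scoopie} produces families $(w_t)_{t\in\Omega}\subseteq S_X$ and $(z_t^\ast)_{t\in\overline{\Omega}}\subseteq B_{X^\ast}$ satisfying (\ref{biorslogger})--(iv).

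Now set $Z_0=\bigcap_{t\in\Omega}\ker(z_t^\ast)$ and let $Y:=Z_0$, so $X/Y=X/Z_0$. By part (iv) of Theorem~\ref{scoopie}, with $Q:X\longrightarrow X/Z_0$ the quotient map, $(QRw_{\tau(m)})_{m=0}^\infty=(Qw_{\tau(m)})_{m=0}^\infty$ is a basis for $X/Z_0$ with basis constant at most $1+\delta$. This immediately gives the basis assertion of the theorem. It remains to check that $(X/Z_0)^\ast$ is non-separable. For this I would use the standard fact recalled in Section~\ref{backgrounder}: $Q^\ast$ is an isometric weak$^\ast$-isomorphism of $(X/Z_0)^\ast$ onto $\overline{\spn}^{\,w^\ast}(z_t^\ast\mid t\in\Omega)$ (here $Z_0$ is the intersection of the kernels of the countable family $(z_{\tau(m)}^\ast)_{m<\omega}$). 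Since $Q^\ast$ is an isometry, it suffices to exhibit an uncountable norm-separated subset of $\overline{\spn}^{\,w^\ast}(z_t^\ast\mid t\in\Omega)$. But each $z_b^\ast$ for $b\in\partial\Omega$ already lies in the weak$^\ast$-closed linear hull of $(z_t^\ast)_{t\in\Omega}$: by part (ii) of Theorem~\ref{scoopie}, $z_b^\ast$ is the weak$^\ast$-limit of $(z_{b_n}^\ast)_{n}$ along the branch $b$, each $z_{b_n}^\ast$ being one of the $z_t^\ast$, $t\in\Omega$. And by (\ref{biorslogger}), for distinct $b,b'\in\partial\Omega$, writing $s=\MIN(b\setminus b')\in\Omega$ we have $\langle z_b^\ast-z_{b'}^\ast,w_s\rangle=\langle z_s^\ast,w_s\rangle>\epsilon/(2+\theta)=\epsilon/3$ since $s\sqsubseteq' b$ but $s\not\sqsubseteq' b'$; as $\Vert w_s\Vert=1$ this forces $\Vert z_b^\ast-z_{b'}^\ast\Vert>\epsilon/3$. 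Since $\partial\Omega$ is uncountable, $\{z_b^\ast\mid b\in\partial\Omega\}$ is an uncountable $\epsilon/3$-separated subset of $\overline{\spn}^{\,w^\ast}(z_t^\ast\mid t\in\Omega)$, so $(X/Z_0)^\ast$ is non-separable.

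I expect no serious obstacle here; the real content has been packaged into Theorem~\ref{scoopie}, and this proof is a short bookkeeping argument. The one point requiring a little care is the identification of $(X/Z_0)^\ast$ with the weak$^\ast$-closed span of the $z_t^\ast$ — one must note that $Z_0$ is genuinely the intersection of countably many kernels (namely those of the $z_{\tau(m)}^\ast$, $m<\omega$) so that the quoted fact from Section~\ref{backgrounder} applies — together with the observation that the branch functionals $z_b^\ast$ land inside that weak$^\ast$-closed span, which is exactly what part (ii) delivers. Everything else (the separation estimate, the basis constant) is read off directly from the conclusions of Theorem~\ref{scoopie}.
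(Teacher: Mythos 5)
Your proposal is correct and follows essentially the same route as the paper: the same application of Theorem~\ref{scoopie} with $W=Z=X$, $R=Id_X$, $K=B_{X^\ast}$, $Y=\bigcap_{t\in\Omega}\ker(z_t^\ast)$, the basis read off from part (iv), and non-separability of $(X/Y)^\ast$ witnessed by the branch functionals $\{z_b^\ast\mid b\in\partial\Omega\}$, which are $\epsilon/3$-separated. The only (harmless) divergence is that you identify $Q^\ast((X/Y)^\ast)$ with the weak$^\ast$-closed linear hull of $(z_{\tau(m)}^\ast)_{m<\omega}$ via the standard duality fact recorded in Section~\ref{backgrounder}, whereas the paper re-derives the needed inclusion $\{z_t^\ast\mid t\in\overline{\Omega}\}\subseteq Q^\ast((X/Y)^\ast)$ from the internal objects $\overline{T}$, $(v_m)$ of the proof of Theorem~\ref{scoopie}; your shortcut is legitimate and slightly cleaner.
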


\begin{proof}
Let $\Omega$ be the tree of finite sequences of finite ordinals, as defined in Example~\ref{fullcounter}. Fix $\delta>0$ and $\tau:\omega \longrightarrow \Omega$ a bijection such that $\tau(m)\sqsubseteq\tau(m')$ implies $m\leq m'$. Let $\epsilon>0$ be small enough that $B_{X^\ast}$ contains an uncountable, $\epsilon$-separated subset. An application of Theorem~\ref{scoopie} with $\theta=1$, $W=Z=X$, $R=Id_X$, $K=B_{X^\ast}$ and $\tee=\Omega$ yields families $(w_t)_{t\in\Omega}\subseteq S_X$ and $(z_t^\ast)_{t\in\overline{\Omega}}\subseteq B_{X^\ast}$ such that
\begin{equation*}
\langle z_t^\ast ,w_s\rangle=\begin{cases}
\langle z_s^\ast,w_s\rangle>\frac{\epsilon}{3}&\mbox{if }s\sqsubseteq t\\ 0 &\mbox{if }s\not\sqsubseteq t
\end{cases}, \quad s\in\Omega,\,t\in\overline{\Omega}\,.
\end{equation*}
and the map $\Xi: t\mapsto z_t^\ast$ from $\overline{\Omega}$ to $X^\ast$ is coarse-wedge-to-weak$^\ast$ continuous. Let also $(f_t^\ast)_{t\in\Omega}$ and $(v_m)_{m=1}^\infty$ be as in the proof of Theorem~\ref{scoopie} and for $m\in[1,\omega)$ let \[ v_m^{\ast\ast}:= (\imath_{[(v_q)_{q=1}^\infty]}v_m)|_{[(v_q^\ast)_{q=1}^\infty]}. \]
Let $Y:= \bigcap_{t\in\Omega}\ker(z_t^\ast)$, let $Q:X\longrightarrow X/Y$ be the quotient map, and let $\overline{T}: X/Y\longrightarrow [(v_m^\ast)_{m=1}^\infty]$ be as in the proof of Theorem~\ref{scoopie}. As per Theorem~\ref{scoopie}(iv), we may assume $(Qw_{\tau(m)})_{m=0}^\infty$ is a basis for $X/Y$ with basis constant not exceeding $1+\delta$.
Moreover for $m<\omega$ and $x\in X$ we have
\begin{equation}\label{aldishopping}
\langle Q^\ast \overline{T}^\ast v_m^{\ast\ast},x\rangle = \langle v_m^{\ast\ast},\overline{T}Qx\rangle  = \langle Tx,v_m\rangle=\langle v_m,x\rangle,
\end{equation}
hence
\begin{equation}\label{couldntbe}
\spn\{ z_t^\ast \mid t\in\Omega\} = \spn\{ f_t^\ast\mid t\in\Omega\} = \spn\{ v_m\mid 1\leq m<\omega\}\subseteq Q^\ast \big( (X/Y)^\ast\big),
\end{equation}
where the first equality is immediate from the definitions, the second equality follows from the inductively verified fact that
\[
\forall\,k<\omega \quad \spn\{ f_{\tau(m)}^\ast\mid 1\leq m\leq k\}=\spn\{ v_m\mid 1\leq m\leq k\},
\] and the final inclusion follows from (\ref{aldishopping}). Since $\Omega$ is dense in $\overline{\Omega}$ in the coarse wedge topology, and since $Q^\ast((X/Y)^\ast)$ is weak$^\ast$-closed in $X^\ast$, it follows from the aforementioned continuity of $\Xi$ that $\{ z_t^\ast\mid t\in\overline{\Omega}\}\subseteq Q^\ast ((X/Y)^\ast)$. So for each $t\in\overline{\Omega}$ there exists (a unique) $y_t^\ast\in (X/Y)^\ast$ such that $Q^\ast y_t^\ast=z_t^\ast$, and then for $t,t'\in \partial\Omega$ with $t\neq t'$ we have
\[
\Vert y_t^\ast - y_{t'}^\ast\Vert \geq \langle y_t^\ast - y_{t'}^\ast, Qw_{\min (t\setminus t')}\rangle = \langle z_t^\ast , w_{\min (t\setminus t')}\rangle - \langle z_{t'}^\ast , w_{\min (t\setminus t')}\rangle >\frac{\epsilon}{3}-0 = \frac{\epsilon}{3}\,,
\]
hence $\{y_t^\ast\mid t\in\partial\Omega \}$ is an uncountable $\frac{\epsilon}{3}$-separated subset of $(X/Y)^\ast$.
\end{proof}

\section{Universal non-Asplund operators}\label{universalsection}
In this section we study the existence of universal elements of certain classes of non-Asplund operators. In order to state our main result we need to first define two types of operators associated to trees. After each definition we state a characterisation of the operators that factor the operator associated to a particular tree.

The following definition introduces the class of operators from which we shall draw our examples of universal non-Asplund operators.
\begin{definition}\label{discop}
Let $(\tee,\preceq)$ be a tree. Define $\Sigma_\tee\colon\ell_1(\tee)\longrightarrow \ell_\infty(\tee)$ by setting
\[
\Sigma_\tee w = \Big(\sum_{s\preceq t}w(s)\Big)_{t\in\tee}\,,\qquad w\in \ell_1(\tee).
\]
Equivalently, $\Sigma_\tee$ is the unique element of $\allop(\ell_1(\tee),\ell_\infty(\tee))$ satisfying $\Sigma_\tee e_t = \chi_{\tee[t\preceq]}$ for each $t\in\tee$.
\end{definition}
We have $\Vert \Sigma_\tee\Vert=1$ for every nonempty tree $\tee$. It is shown in \cite{PAHBSzlenkLarge} that every operator of the form $\Sigma_\tee$ is strictly singular.

We shall use the following proposition from \cite{PAHBSzlenkLarge} to determine whether $\Sigma_\tee$ factors through $T$, for certain trees $(\tee,\preceq)$ and operators $T$.

\begin{proposition}\label{factorkar}
Let $(\tee,\preceq)$ be a tree, $X$ and $Y$ Banach spaces and $T\in\allop(X,Y)$. The following are equivalent:
\begin{itemize}
\item[(i)] $\Sigma_\tee$ factors through $T$.
\item[(ii)] There exist $\delta>0$ and families $(x_t)_{t\in\tee}\subseteq B_X$ and $(x_t^\ast)_{t\in\tee}\subseteq T^\ast B_{Y^\ast}$ such that
\begin{equation}\label{deltadelta}
\langle x_t^\ast,x_s\rangle = \begin{cases}
\langle x_s^\ast,x_s\rangle\geq\delta&\mbox{if }s\preceq t\\
0&\mbox{if }s\npreceq t
\end{cases},
\quad\quad s,t\in\tee.
\end{equation}
\end{itemize}
\end{proposition}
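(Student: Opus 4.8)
The plan is to prove the two implications separately, the implication (i)$\Rightarrow$(ii) being the routine unwinding of a factorisation and (ii)$\Rightarrow$(i) being the substantive direction.

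For (i)$\Rightarrow$(ii), suppose $\Sigma_\tee = V T U$ with $U\in\allop(\ell_1(\tee),X)$ and $V\in\allop(Y,\ell_\infty(\tee))$. After rescaling we may assume $\Vert U\Vert\leq 1$ and set $\delta = \Vert V\Vert^{-1}$ (assuming $V\neq 0$; if $V=0$ then $\Sigma_\tee = 0$, forcing $\tee=\emptyset$ and the statement is vacuous). Put $x_t = U e_t \in B_X$. For the functionals, recall that $\ell_\infty(\tee)^\ast$ contains the coordinate functionals $e_t^\ast$ given by evaluation at $t\in\tee$, each of norm $1$; define $x_t^\ast = V^\ast(\Vert V\Vert^{-1} e_t^\ast) \in \Vert V\Vert^{-1} V^\ast B_{\ell_\infty(\tee)^\ast} \subseteq T^\ast B_{Y^\ast}$ --- wait, one must be slightly careful: $V^\ast e_t^\ast \in Y^\ast$ and we want $x_t^\ast \in T^\ast B_{Y^\ast}$, which is not immediate. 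Instead, observe $\langle e_t^\ast, \Sigma_\tee e_s\rangle = \langle e_t^\ast, \chi_{\tee[s\preceq]}\rangle = \chi_{\tee[s\preceq]}(t)$, which equals $1$ if $s\preceq t$ and $0$ otherwise. Since $\Sigma_\tee e_s = VTU e_s = VT x_s$, we get $\langle V^\ast e_t^\ast, T x_s\rangle = \langle T^\ast V^\ast e_t^\ast, x_s\rangle$, so taking $x_t^\ast = \Vert V\Vert^{-1} T^\ast V^\ast e_t^\ast \in T^\ast B_{Y^\ast}$ and $\delta = \Vert V\Vert^{-1}$ yields exactly \eqref{deltadelta}, with $\langle x_s^\ast, x_s\rangle = \Vert V\Vert^{-1} > 0$ constant in $s$.

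For (ii)$\Rightarrow$(i), I would build $U$ and $V$ directly from the given families. Define $U\in\allop(\ell_1(\tee),X)$ as the unique operator with $Ue_t = x_t$ for $t\in\tee$; this exists and has $\Vert U\Vert\leq 1$ since $(x_t)_{t\in\tee}\subseteq B_X$, using the fact recorded in Section~\ref{backgrounder}. For each $t\in\tee$ pick $y_t^\ast\in B_{Y^\ast}$ with $T^\ast y_t^\ast = x_t^\ast$, and define $V\colon Y\longrightarrow \ell_\infty(\tee)$ by $Vy = (\langle y_t^\ast, y\rangle)_{t\in\tee}$; then $V$ is bounded with $\Vert V\Vert\leq 1$. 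Now compute, for $s\in\tee$,
\[
VTUe_s = VTx_s = \big(\langle y_t^\ast, Tx_s\rangle\big)_{t\in\tee} = \big(\langle T^\ast y_t^\ast, x_s\rangle\big)_{t\in\tee} = \big(\langle x_t^\ast, x_s\rangle\big)_{t\in\tee}.
\]
By \eqref{deltadelta} the $t$-coordinate of this vector is $\langle x_s^\ast, x_s\rangle$ when $s\preceq t$ and $0$ otherwise; that is, $VTUe_s = \langle x_s^\ast, x_s\rangle\,\chi_{\tee[s\preceq]}$. This is \emph{not} quite $\Sigma_\tee e_s = \chi_{\tee[s\preceq]}$, because the scalar $\langle x_s^\ast, x_s\rangle$ depends on $s$ and need only satisfy $\langle x_s^\ast,x_s\rangle\geq\delta$, not $=\delta$. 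This is the one genuine obstacle, and I expect it is handled by absorbing the scalars into $U$: replace $x_s$ by $\widetilde{x}_s := \langle x_s^\ast, x_s\rangle^{-1} x_s$ is no good since that may exceed $B_X$; instead rescale the other way. Since $\delta \leq \langle x_s^\ast,x_s\rangle \leq \Vert x_s^\ast\Vert\,\Vert x_s\Vert \leq 1$ (as $x_s^\ast\in T^\ast B_{Y^\ast}$ need not have norm $\leq 1$, but in any case $\langle x_s^\ast, x_s\rangle$ is a bounded positive quantity), define instead $\widehat{U}e_s = \langle x_s^\ast, x_s\rangle^{-1} x_s$; this may have norm up to $\delta^{-1}$, so $\Vert \widehat U\Vert \leq \delta^{-1} < \infty$, which is all that is needed for $\widehat U$ to be a bounded operator. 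Then $V T \widehat{U} e_s = \chi_{\tee[s\preceq]} = \Sigma_\tee e_s$ for every $s\in\tee$, and since the $e_s$ have dense span in $\ell_1(\tee)$ we conclude $VT\widehat U = \Sigma_\tee$, exhibiting the factorisation. Thus $\Sigma_\tee$ factors through $T$, completing the proof.
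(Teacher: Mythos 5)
Your proof is correct, and it is essentially the canonical argument one would expect (the paper itself does not prove Proposition~\ref{factorkar} here but cites \cite{PAHBSzlenkLarge} for it, so there is no in-paper proof to compare against). Both directions are handled properly: in (i)$\Rightarrow$(ii) you correctly place $x_t^\ast=\Vert V\Vert^{-1}T^\ast V^\ast e_t^\ast$ inside $T^\ast B_{Y^\ast}$, and in (ii)$\Rightarrow$(i) the one genuine wrinkle --- that $\langle x_s^\ast,x_s\rangle$ varies with $s$ --- is resolved by the rescaling $\widehat{U}e_s=\langle x_s^\ast,x_s\rangle^{-1}x_s$, which is legitimate since $\Vert\widehat{U}\Vert\leq\delta^{-1}$ and the definition of factorisation imposes no norm constraint on $U$.
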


The following definition introduces a class of operators that provide `continuous' analogues of $\Sigma_\tee$ for certain trees $\tee$. We shall draw from this class our examples of operators that are universal for the class of non-Asplund operators having separable codomain.

\begin{definition}
Let $\tee$ be a downwards-closed subtree of $\Omega$. Define $\sigma_\tee: \ell_1(\tee)\longrightarrow C(\overline{\tee})$ by
\begin{equation*}
(\sigma_\tee x)(t)=\begin{cases}
\sum_{s\sqsubseteq' t}x(s),& t\in\tee\\
\sum_{s\sqsubset' t}x(s),& t\in\partial\tee
\end{cases}, \quad x\in\ell_1(\tee), \,t\in\overline{\tee}.
\end{equation*}
That is, $\sigma_\tee$ is the unique element of $\allop(\ell_1(\tee),C(\overline{\tee}))$ that maps each $e_t\in\ell_1(\tee)$ to $\chi_{\tee[t\sqsubseteq']}\in C(\overline{\tee})$.
\end{definition}

The following `continuous' analogue of Proposition~\ref{factorkar} is also proved in \cite{PAHBSzlenkLarge}.

\begin{proposition}\label{kaktorfar}
Let $K$ be a compact Hausdorff space, $I$ an index set and $\{ K_i\}_{i\in I}$ a family of clopen subsets of $K$. For Banach spaces $X$ and $Y$ and $T\in\allop(X,Y)$ the following are equivalent:
\begin{itemize}
\item[(i)] $T$ factors the unique element of $\allop(\ell_1(I), C(K))$ satisfying $e_i\mapsto \chi_{K_i}$, $i\in I$.
\item[(ii)] There exists $(\delta_i)_{i\in I}\subseteq \real$ with $\inf_{i\in I}\delta_i>0$, a family $(x_i)_{i\in I}\subseteq X$ with $\sup_{i\in I}\Vert x_i\Vert <\infty$ and a weak$^\ast$-continuous map $\Xi: K\longrightarrow Y^\ast$ such that
\[
\forall\, i\in I\quad \forall\, k\in K \quad \langle \Xi (k), Tx_i\rangle = \begin{cases}
\delta_i, & k\in K_i\\
0, &k\notin K_i
\end{cases}.
\]
\end{itemize}
\end{proposition}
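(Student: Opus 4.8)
\textbf{Plan of proof for Proposition~\ref{kaktorfar}.}

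The implication (i)$\Rightarrow$(ii) is the routine direction: suppose $T$ factors $\theta := $ (the operator $e_i\mapsto\chi_{K_i}$), say $V T U = \theta$ with $U\in\allop(\ell_1(I),X)$ and $V\in\allop(C(K),Y)$. Put $x_i = Ue_i$, so $\sup_i\|x_i\|\leq\|U\|<\infty$. For each $k\in K$ let $\lambda_k\in C(K)^\ast$ be the evaluation functional $f\mapsto f(k)$; the map $k\mapsto\lambda_k$ is weak$^\ast$-continuous, and I set $\Xi(k) = V^\ast\lambda_k\in Y^\ast$, which is then also weak$^\ast$-continuous. Then $\langle\Xi(k),Tx_i\rangle = \langle\lambda_k, VTUe_i\rangle = (\theta e_i)(k) = \chi_{K_i}(k)$, so (ii) holds with $\delta_i\equiv 1$. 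This part I would state in two or three lines.

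For the substantial direction (ii)$\Rightarrow$(i), assume the data $(\delta_i)_{i\in I}$, $(x_i)_{i\in I}$ and the weak$^\ast$-continuous $\Xi\colon K\to Y^\ast$. First I would rescale: replacing $x_i$ by $\delta_i^{-1}x_i$ (legitimate since $\inf_i\delta_i>0$ and $\sup_i\|x_i\|<\infty$ keep the family bounded) I may assume all $\delta_i = 1$. Now define $U\in\allop(\ell_1(I),X)$ as the unique operator with $Ue_i = x_i$ (this exists by the boundedness fact recalled in Section~\ref{backgrounder}). For $V$, I would exploit that $\Xi$ is a weak$^\ast$-continuous map into $Y^\ast$: define $V\colon C(K)\to Y$ by dualising. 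Concretely, for $y\in Y$ the function $k\mapsto\langle\Xi(k),y\rangle$ is continuous on $K$ (composition of $\Xi$ with the weak$^\ast$-continuous functional $\imath_Y y$), so there is an operator $S\colon Y\to C(K)$, $Sy = (k\mapsto\langle\Xi(k),y\rangle)$, with $\|S\|\leq\sup_{k\in K}\|\Xi(k)\| =: M<\infty$ (the supremum is finite because $\Xi(K)$ is weak$^\ast$-compact, hence norm-bounded by uniform boundedness). I then take $V = S^\ast\circ\jmath$ where $\jmath\colon C(K)\to C(K)^{\ast\ast}$ is... actually the cleaner route is: $S^\ast\colon C(K)^\ast\to Y^\ast$, and since $\lambda_k\mapsto\Xi(k)$ we have $S^\ast$ extends $\Xi$ through the evaluation map. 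The honest construction of $V\colon C(K)\to Y$ itself requires more care because $\Xi$ only gives a map \emph{into} $Y^\ast$, not out of $C(K)$ into $Y$; here I would instead argue at the level of the factorisation through $\ell_\infty(I)$ rather than insisting on landing in $Y$: it suffices to produce $U$ and $V$ with $VTU = \theta$, and one can build $V$ as the adjoint of the natural map $Y^{\ast}\to \ell_\infty(I)$, $y^\ast\mapsto(\langle y^\ast, Tx_i\rangle)_i$, composed appropriately — but since the target is $C(K)$ and $\{K_i\}$ are clopen, the right move is to verify directly that $f\mapsto$ (the element of $Y$ one wants) is well-defined only on the closed span of $\{\chi_{K_i}\}$, which is all of $\theta(\ell_1(I))$-relevant, and extend.

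Let me restate the core of (ii)$\Rightarrow$(i) more carefully, since the map $V$ must go $C(K)\to Y$. The key computation to verify is that, with $U e_i = x_i$, the composite $TU\colon\ell_1(I)\to Y$ satisfies $\langle\Xi(k), TUe_i\rangle = \chi_{K_i}(k)$ for all $k,i$, i.e.\ $\imath_Y\circ TU$ followed by evaluation against $\Xi(k)$ reproduces $\theta e_i$ evaluated at $k$. So I want $V\colon C(K)\to Y$ with $\langle\Xi(k), Vg\rangle = g(k)$ for all $g$ in the range of $\theta$ and all $k$; then $V(TUe_i) = V(\theta e_i)$... no — I need $VTUe_i = \theta e_i$ as elements of $C(K)$, and $\langle\Xi(k),VTUe_i\rangle = \langle\Xi(k),V(\chi_{K_i}\text{-type elt})\rangle$; this is circular unless $\Xi(K)$ separates points of $Y$, which is not assumed. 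The correct and standard fix (this is exactly the mechanism behind Proposition~\ref{factorkar} and its continuous analogue in \cite{PAHBSzlenkLarge}) is: let $R\colon Y\to C(K)$ be the operator $Ry = (k\mapsto\langle\Xi(k),y\rangle)$ constructed above; set $S := RTU\colon\ell_1(I)\to C(K)$, so $Se_i = (k\mapsto\langle\Xi(k),Tx_i\rangle) = \chi_{K_i} = \theta e_i$, whence $S = \theta$ by density of $\spn\{e_i\}$ in $\ell_1(I)$ and continuity. Therefore $\theta = RTU = RT\,U$, exhibiting $\theta$ as factoring through $T$ with left factor $R$ and right factor $U$. This is the whole proof of (ii)$\Rightarrow$(i).

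\textbf{Main obstacle.} The only genuine subtlety is constructing the left factor $R\colon Y\to C(K)$ and checking it is bounded: one must know $M = \sup_{k\in K}\|\Xi(k)\|<\infty$, which follows from weak$^\ast$-compactness of $\Xi(K)$ (continuous image of compact $K$) together with the uniform boundedness principle, and one must check $k\mapsto\langle\Xi(k),y\rangle$ is genuinely continuous, which is precisely weak$^\ast$-continuity of $\Xi$ paired against $y\in Y\subseteq Y^{\ast\ast}$. Everything else — the rescaling to $\delta_i\equiv 1$, the existence of $U$, the identification $S = \theta$ — is formal. I expect the author's proof to refer to \cite{PAHBSzlenkLarge} for the bulk and to spend at most a few lines here.
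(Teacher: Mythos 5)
Your argument is correct, and there is nothing in the paper to compare it against: the author does not prove Proposition~\ref{kaktorfar} here but simply cites \cite{PAHBSzlenkLarge} for it, so your self-contained proof is exactly what a reader would need. The final form of your argument is the right one: for (ii)$\Rightarrow$(i), after rescaling to $\delta_i\equiv 1$, take $U\in\allop(\ell_1(I),X)$ with $Ue_i=x_i$ and the left factor $R\in\allop(Y,C(K))$, $Ry=(k\mapsto\langle\Xi(k),y\rangle)$, which is well defined by weak$^\ast$-continuity of $\Xi$ and bounded because $\Xi(K)$ is weak$^\ast$-compact, hence norm-bounded by uniform boundedness; then $RTUe_i=\chi_{K_i}$ and density of $\spn\{e_i\}$ gives $RTU=\theta$. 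Two presentational points: in (i)$\Rightarrow$(ii) you write $V\in\allop(C(K),Y)$, but by the paper's convention ``$T$ factors $\theta$'' means $VTU=\theta$ with $V\in\allop(Y,C(K))$ (your subsequent use of $V^\ast\lambda_k\in Y^\ast$ shows you mean the correct direction, so fix the notation); and the long middle paragraph, in which you chase a map $C(K)\to Y$ before realising the left factor must go $Y\to C(K)$, is a false start that should be deleted, since the closing construction of $R$ is the entire proof.
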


The following theorem is our main universality result for non-Asplund operators; some applications shall follow.

\begin{theorem}\label{puniuni}
Let $X$ and $Y$ be Banach spaces, $T\in\allop(X,Y)$ an operator such that $T^\ast(Y^\ast)$ is non-separable, and $\tee$ a downwards-closed subtree of $\Omega$. The following two statements hold:
\begin{itemize}
\item[(i)] If $X$ is separable then $\Sigma_\tee$ factors through $T$; and,
\item[(ii)] If $Y$ is separable then $\sigma_\tee$ factors through $T$, hence $\Sigma_\tee$ factors through $T$.
\end{itemize}
It follows that if $\ess$ is a downwards-closed subtree of $\Omega$ such that $\partial\ess$ is uncountable then:
\begin{itemize}
\item[(iii)] $\Sigma_\ess$ is universal for the class of non-Asplund operators; and,
\item[(iv)] $\sigma_\ess$ is universal for the class of operators in $\complement\sepop^\ast$ having separable codomain, which coincides with the class of non-Asplund operators having separable codomain.
\end{itemize}
In particular, (iii) and (iv) both hold in each of the following cases: 
\begin{itemize}
\item[(v)] $\ess=\Omega$, the tree of finite sequences of finite ordinals (c.f. Example~\ref{fullcounter}); and,
\item[(vi)] $\ess=\dee$, the infinite dyadic tree consisting of all finite sequences in $\{ 0,1\}$.
\end{itemize}
\end{theorem}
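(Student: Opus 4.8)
The strategy is to deduce everything from Theorem~\ref{scoopie} together with the factorisation criteria Proposition~\ref{factorkar} and Proposition~\ref{kaktorfar}. For part (i), assume $X$ is separable and $T^\ast(Y^\ast)$ is non-separable. I would first reduce to the setting of Theorem~\ref{scoopie}: put $W=X$, $Z=\overline{T(X)}$ (a separable Banach space since $X$ is), and let $R\colon W\longrightarrow Z$ be $T$ with codomain corestricted to $Z$; then $R^\ast$ is a quotient map onto a weak$^\ast$-dense subspace of... more carefully, one needs $R^\ast(Z^\ast)$ non-separable, which follows because $T^\ast$ factors through $R^\ast$ composed with the adjoint of the inclusion $Z\hookrightarrow Y$, and $T^\ast(Y^\ast)$ non-separable forces $R^\ast(Z^\ast)$ non-separable. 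Take $K=B_{Z^\ast}$, which is absolutely convex and weak$^\ast$-compact; since $R^\ast(B_{Z^\ast})$ has non-separable closure (being the image of a ball under the adjoint of an operator with non-separable adjoint range), it contains an uncountable $\epsilon$-separated subset for some $\epsilon>0$. Now apply Theorem~\ref{scoopie} with this data, the given tree $\tee$, $\delta=\theta=1$ (say), and any admissible $\tau$, to obtain $(w_t)_{t\in\tee}\subseteq S_W=S_X$ and $(z_t^\ast)_{t\in\overline{\tee}}\subseteq K=B_{Z^\ast}$ satisfying~\eqref{biorslogger}. Restricting to $t,s\in\tee$ and setting $x_t=w_t\in B_X$ and $x_t^\ast=(\text{inclusion }Z\hookrightarrow Y)^{\ast\ast}$... no: set $x_t^\ast\in T^\ast B_{Y^\ast}$ by noting $z_t^\ast\in B_{Z^\ast}$ pulls back (via the adjoint of $Z\hookrightarrow Y$, using Hahn--Banach, or directly since $R^\ast z_t^\ast = T^\ast(\text{extension of }z_t^\ast)$) to give a functional of the desired form; the point is $R^\ast z_t^\ast\in T^\ast B_{Y^\ast}$ because any $z_t^\ast\in B_{Z^\ast}$ extends to $y_t^\ast\in B_{Y^\ast}$ with $T^\ast y_t^\ast = R^\ast z_t^\ast$ on $X$. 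Then~\eqref{biorslogger} restricted to $\tee$ is exactly condition~(ii) of Proposition~\ref{factorkar} with $\delta=\epsilon/3$, so $\Sigma_\tee$ factors through $T$.

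For part (ii), assume $Y$ is separable. Then $T(X)\subseteq Y$ lies in a separable space, so again corestrict to $Z=\overline{T(X)}$ (or simply take $Z=Y$, which is separable). Apply Theorem~\ref{scoopie} as in part (i) to get $(w_t)_{t\in\tee}\subseteq S_X$ and $(z_t^\ast)_{t\in\overline{\tee}}\subseteq K\subseteq Z^\ast$ satisfying~\eqref{biorslogger} and, crucially, assertion~(ii) of Theorem~\ref{scoopie}: the map $\Xi\colon t\mapsto z_t^\ast$ from $\overline{\tee}$ to $Z^\ast$ is coarse-wedge-to-weak$^\ast$ continuous. Now I want to invoke Proposition~\ref{kaktorfar} with $K=\overline{\tee}$ equipped with its coarse wedge topology (compact Hausdorff by the discussion following Theorem~\ref{coarsecompact}), index set $I=\tee$, and clopen sets $K_t=\overline{\tee}[t\sqsubseteq']$ for $t\in\tee$ — these are clopen in the coarse wedge topology of $\overline{\tee}$ since $\tee\subseteq\Omega$ has $\tee[\prec t]$ finite, so $t$ has order type $0$ or a successor below it. The unique operator $\ell_1(\tee)\to C(\overline{\tee})$ sending $e_t\mapsto\chi_{K_t}$ is precisely $\sigma_\tee$. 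Taking $x_t=w_t\in B_X$, $\delta_t=\langle z_t^\ast,Rw_t\rangle>\epsilon/3>0$ (uniformly bounded below), and $\Xi$ the pullback of the above map into $Y^\ast$ (extend each $z_t^\ast\in Z^\ast$ to $Y^\ast$; weak$^\ast$-continuity is preserved since the extension can be realised through $i^\ast$ for $i\colon Z\hookrightarrow Y$ if we use $Z=\overline{T(X)}$ — or just take $Z=Y$ from the start and avoid the issue entirely), condition~(ii) of Proposition~\ref{kaktorfar} holds by~\eqref{biorslogger} and assertion~(ii) of Theorem~\ref{scoopie}. Hence $T$ factors $\sigma_\tee$. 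That $\Sigma_\tee$ then also factors through $T$ follows since $\Sigma_\tee$ factors through $\sigma_\tee$: the natural map $C(\overline{\tee})\to\ell_\infty(\tee)$ given by restriction of functions from $\overline{\tee}$ to $\tee$ carries $\chi_{K_t}=\chi_{\overline{\tee}[t\sqsubseteq']}$ to $\chi_{\tee[t\preceq]}$ (one should check this composition equals $\Sigma_\tee$; it does, and the map is bounded of norm $1$), so $\Sigma_\tee = (\text{restriction})\circ\sigma_\tee$, giving the factorisation.

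For the universality assertions (iii)--(vi): a non-Asplund operator is, by definition, an operator $T\colon X\to Y$ with a separable subspace $Z_0\subseteq X$ such that $T|_{Z_0}\notin\sepop^\ast$, i.e. $(T|_{Z_0})^\ast$ has non-separable range. Since factorisation is transitive and $T$ factors $T|_{Z_0}$ (via the inclusion $Z_0\hookrightarrow X$), it suffices to show $\Sigma_\ess$ (resp.\ $\sigma_\ess$) factors through every operator of the form $T|_{Z_0}$ with separable domain and non-separable adjoint range — and this is exactly part~(i) (resp.\ part~(ii)) applied with the tree $\ess$, once we observe that $\partial\ess$ uncountable is the condition under which $\Sigma_\ess$ (resp.\ $\sigma_\ess$) is itself non-Asplund, so that it is a legitimate candidate universal element. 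Indeed $\Sigma_\ess^\ast$ and $\sigma_\ess^\ast$ have non-separable range precisely because the functionals $\{z_t^\ast\mid t\in\partial\ess\}$ (or point evaluations at the uncountably many branches, which form an $\epsilon$-separated family in the adjoint image) witness non-separability; conversely if $\partial\ess$ were countable one checks $\ess$ itself is countable-branching-bounded and these operators are Asplund. For~(iv) one additionally notes that for an operator with separable codomain, not belonging to $\sepop^\ast$ is equivalent to being non-Asplund — this is because with $Y$ separable, $T\notin\sepop^\ast$ already means $T^\ast(Y^\ast)$ is non-separable, and one can pass to a separable subspace $Z_0\subseteq X$ still witnessing this (standard: a non-separable subset of $T^\ast(Y^\ast)$ is controlled by countably many pairs, generating a separable $Z_0$). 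Parts~(v) and~(vi) are immediate since $\partial\Omega$ and $\partial\dee\cong\{0,1\}^\omega$ are both uncountable.

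\textbf{Main obstacle.} The substantive points are the two reductions to Theorem~\ref{scoopie} — in particular verifying that $R^\ast(K)$ contains an uncountable $\epsilon$-separated set (which needs the passage from ``$T^\ast$ has non-separable range'' through corestriction to a separable $Z$, using that norm-separability of $R^\ast(B_{Z^\ast})$ would force separability of the whole range, and that a non-separable bounded set contains an uncountable $\epsilon$-net-free, hence $\epsilon$-separated, subset for some $\epsilon>0$) — and the bookkeeping to identify the clopen sets $\overline{\tee}[t\sqsubseteq']$ as genuinely clopen in the coarse wedge topology (handled by Proposition~\ref{coarsefacts} and the fact that members of $\tee\subseteq\Omega$ have finite predecessor-sets) and to confirm $\Sigma_\tee$ factors through $\sigma_\tee$ via restriction of continuous functions. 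None of these is deep, but the extension-of-functionals step (ensuring the factoring functionals land in $T^\ast B_{Y^\ast}$ and that weak$^\ast$-continuity survives the extension) is the place where one must be slightly careful; it is cleanest to simply take $Z=Y$ throughout when $Y$ is separable, and for part~(i) to note that any norm-$\le 1$ functional on $Z=\overline{T(X)}$ extends, by Hahn--Banach, to a norm-$\le 1$ functional on $Y$ whose $T$-adjoint agrees with the $R$-adjoint of the original on $X$.
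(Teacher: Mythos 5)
Your proposal is correct in its essentials and follows the same overall strategy as the paper: reduce to Theorem~\ref{scoopie}, feed the resulting families into Proposition~\ref{factorkar} (for $\Sigma_\tee$) and Proposition~\ref{kaktorfar} with $K=\overline{\tee}$, $K_t=\overline{\tee}[t\sqsubseteq']$ (for $\sigma_\tee$), observe $\Sigma_\tee=E\sigma_\tee$ with $E$ the restriction map, and then get (iii)--(vi) by passing to a separable subspace of the domain and using the ideal properties. The one point where you diverge in part (i) is the choice of data for Theorem~\ref{scoopie}: you corestrict $T$ to $Z=\overline{T(X)}$ and take $K=B_{Z^\ast}$, which then forces the Hahn--Banach pull-back step to land the functionals in $T^\ast B_{Y^\ast}$. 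The paper avoids this entirely by taking $W=Z=X$, $R=Id_X$ and $K=T^\ast(B_{Y^\ast})$, which is absolutely convex and weak$^\ast$-compact (continuous image of $B_{Y^\ast}$ under the weak$^\ast$-to-weak$^\ast$ continuous map $T^\ast$) and already sits inside $T^\ast B_{Y^\ast}$; this is slightly cleaner, but your route is sound since $i^\ast(B_{Y^\ast})=B_{Z^\ast}$.

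The one genuine weak spot is your justification in (iv) that $\complement\sepop^\ast$ with separable codomain coincides with the non-Asplund operators with separable codomain. The direction ``non-Asplund $\Rightarrow\complement\sepop^\ast$'' is immediate from the ideal property of $\sepop^\ast$, but for the converse your sketch (``a non-separable subset of $T^\ast(Y^\ast)$ is controlled by countably many pairs, generating a separable $Z_0$'') does not work as stated: an uncountable $\epsilon$-separated family is witnessed by uncountably many pairs, so the subspace it generates need not be separable, and in fact this direction is essentially the nontrivial half of Stegall's theorem rather than a routine closure argument. The paper instead derives it from the universality statements you have already proved: if $P\in\complement\sepop^\ast$ has separable codomain then by (ii) $\sigma_\ess$ factors through $P$, and since $\sigma_\ess$ is non-Asplund and $\asplundop$ is an operator ideal, $P$ must be non-Asplund. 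You have all the pieces for this repair; just replace the ``countably many pairs'' remark with that factorisation argument.
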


\begin{proof}
We begin by proving (i). Suppose $X$ is separable. An application of Theorem~\ref{scoopie} with $W=Z=X$, $R=Id_X$ $K=T^\ast (B_{Y^\ast})$, $\theta=1$, and $\epsilon>0$ small enough that $T^\ast B_{Y^\ast}$ contains an uncountable $\epsilon$-separated subset yields families $(x_t)_{t\in\tee}\subseteq S_X$ and $(x_t^\ast)_{t\in\tee}\subseteq T^\ast (B_{Y^\ast})$ such that:
\begin{equation}\label{yioryogger}
\langle x_t^\ast ,x_s\rangle=\begin{cases}
\langle x_s^\ast,x_s\rangle>\frac{\epsilon}{3}&\mbox{if }s\sqsubseteq t\\ 0 &\mbox{if }s\not\sqsubseteq t
\end{cases}, \qquad s,t\in\tee.
\end{equation}
An appeal to Proposition~\ref{factorkar} now establishes the validity of (i).

Next we prove (ii). Suppose $Y$ is norm separable. An application of Theorem~\ref{scoopie} with $W=X$, $Z=Y$, $R=T$, $K=B_{Y^\ast}$, $\theta=1$, and $\epsilon>0$ small enough that $T^\ast (B_{Y^\ast})$ contains an uncountable, $\epsilon$-separated subset yields families $(x_t)_{t\in\tee}\subseteq S_X$ and $(y_t^\ast)_{t\in\overline{\tee}}\subseteq B_{Y^\ast}$ such that
\begin{equation}\label{ziorzogger}
\langle y_t^\ast ,Tx_s\rangle=\begin{cases}
\langle y_s^\ast,Tx_s\rangle>\frac{\epsilon}{3}&\mbox{if }s\sqsubseteq' t\\ 0 &\mbox{if }s\not\sqsubseteq' t
\end{cases}, \qquad s\in\tee,\, t\in\overline{\tee},
\end{equation}
and the map $t\mapsto y_t^\ast$ is coarse-wedge-to-weak$^\ast$ continuous. It follows that $T$ satisfies condition (ii) of Proposition~\ref{kaktorfar} with $K=\overline{\tee}$, index set $I=\tee$, $K_t=\overline{\tee}[t\sqsubseteq']$ for each $t\in\tee$, and $\delta_t=\epsilon/3$ for all $t\in\tee$. We thus deduce from Proposition~\ref{kaktorfar} that $\sigma_\tee$ factors through $T$, which proves the first assertion of (ii). To see that the second assertion of (ii) holds, simply note that we have $\Sigma_\tee=E\sigma_\tee$, where $E$ is the isometric linear embedding $f\mapsto f|_\tee$ of $C(\overline{\tee})$ into $\ell_\infty(\tee)$.

To prove (iii) and (iv), suppose $(\ess,\sqsubseteq)$ is a downwards-closed subtree of $\Omega$ such that $\partial\ess$ is uncountable. We shall first show that $\Sigma_\ess$ and $\sigma_\ess$ are non-Asplund. Since the domain of $\sigma_\ess$, namely $\ell_1(\ess)$, is separable, the non-Asplundness of $\sigma_\ess$ will follow once we show that $\sigma_\ess^\ast$ has non-separable range. The non-Asplundness of $\Sigma_\ess$ will then follows since $\Sigma_\ess=E\sigma_\ess$ and $E^\ast$ is surjective, where $E$ is as defined in the preceding paragraph. For each $s\in\overline{\ess}$ let $\phi_s$ denote the evaluation functional of $C(\overline{\ess})$ at $s$ and note that for $s,s'\in\partial\ess$ with $s\neq s'$ we have \begin{align*}
\Vert \sigma_\ess^\ast\phi_{s} - \sigma_\ess^\ast\phi_{s'}\Vert &\geq \langle \sigma_\ess^\ast\phi_{s} - \sigma_\ess^\ast\phi_{s'}, e_{\min (s\setminus s')}\rangle \\& = \langle \phi_{s} , \chi_{\ess[\min (s\setminus s')\sqsubseteq']}\rangle - \langle \phi_{s'} , \chi_{\ess[\min (s\setminus s')\sqsubseteq']}\rangle \\&= 1-0 = 1\,,
\end{align*}
hence $\{\sigma_\ess^\ast\phi_{s} \mid s\in\partial\ess\}$ is an uncountable $1/2$-separated subset of $\ell_1(\ess)^\ast$. In particular, we have established that $\Sigma_\ess$ and $\sigma_\ess$ are non-Asplund. 

To see that $\Sigma_\ess$ factors through any non-Asplund operator, and is therefore universal for the class of non-Asplund operators - proving (iii) - let $W$ and $Y$ be Banach spaces and $S\in\allop(W,Y)\setminus\asplundop(W,Y)$. Then there exists a separable subspace $X$ of $W$ such that $S|_X\notin \sepop^\ast$. Letting $i:X\longrightarrow W$ denote the formal inclusion mapping, an application of (i) with $T=Si$ yields $U\in\allop(\ell_1(\ess),X)$ and $V\in\allop(Y,\ell_\infty(\ess))$ such that $\Sigma_\ess= V(Si)U=VS(iU)$. In particular, $\Sigma_\ess$ factors through $S$, so the universality of $\Sigma_\ess$ for $\complement\asplundop$ is proved.

We now verify that (iv) holds. Note that since $\sigma_\ess$ has separable codomain by definition and, as we have already shown, $\sigma_\ess\notin\sepop^\ast$, it follows from the first assertion of (ii) that $\sigma_\ess$ is universal for the class of operators in $\complement\sepop^\ast$ having separable codomain. If $Q$ is a non-Asplund operator with separable codomain then, by (iii), $\Sigma_\ess$ factors through $Q$, hence $Q\in\complement\sepop^\ast$ since $\Sigma_\ess\in\complement\sepop^\ast$. On the other hand, if $P\in\complement\sepop^\ast$ has separable codomain then, by (ii), $\sigma_\ess$ factors through $P$, hence $P$ is non-Asplund since $\sigma_\ess$ is non-Asplund. We have now proved (iv).

Finally, (v) and (vi) follow from (iii) and (iv) and the fact that, for either choice of $\ess$, $\ess$ is a downwards-closed subtree of $\Omega$ such that $\partial\ess$ is uncountable. 
\end{proof}

As the space $C(\{0,1\}^\omega)$ is perhaps a more familiar Banach space than $C(\overline{\dee})$, we note the following corollary of Theorem~\ref{puniuni}. Note that for $s\in\{0,1\}^\omega$ and $n<\omega$, $s|_n$ denotes the initial segment of $s$ of length $n$, so that $s|_n\in\{0,1\}^n$.

\begin{corollary}\label{otherunivsep}
Let $S:\ell_1(\dee)\longrightarrow C(\{0,1\}^\omega)$ be the operator satisfying
\[
(Sx)(s)=\sum_{n<\omega}x(s|_n)
\]
for every $x\in \ell_1(\dee)$ and $s\in \{0,1\}^\omega$ (that is, $Se_t=\chi_{\Delta_t}$ for each $t\in\dee$). Then $S$ factors through any operator having separable codomain and adjoint with non-separable range. It follows that $S$ is universal for the class of operators having separable codomain and adjoint with non-separable range.
\end{corollary}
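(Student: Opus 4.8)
The plan is to deduce Corollary~\ref{otherunivsep} directly from Theorem~\ref{puniuni}(iv) applied with $\ess=\dee$, the only extra ingredient being a comparison between the operator $\sigma_\dee\colon\ell_1(\dee)\longrightarrow C(\overline{\dee})$ and the operator $S\colon\ell_1(\dee)\longrightarrow C(\{0,1\}^\omega)$ of the statement. The key observation, which is already flagged in the discussion immediately following Theorem~\ref{scoopie}, is that the map $\iota\colon\{0,1\}^\omega\longrightarrow\partial\dee$ sending $s$ to its set of finite initial segments $\{s|_n\mid n<\omega\}$ is a homeomorphism onto $\partial\dee$, and moreover $\partial\dee$ is \emph{clopen} in $\overline{\dee}$ because $\dee[t+]$ is finite for every $t\in\dee$ (so that each $\dubyoo_{\overline{\dee}}(t,\dee[t+])$ is a clopen neighbourhood of the relevant branch, by Proposition~\ref{coarsefacts}). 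Hence the restriction map $\rho\colon C(\overline{\dee})\longrightarrow C(\partial\dee)$ is a well-defined norm-one operator, and precomposition with $\iota$ gives an isometric isomorphism $C(\partial\dee)\cong C(\{0,1\}^\omega)$.

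First I would make precise that $S$ is the unique element of $\allop(\ell_1(\dee),C(\{0,1\}^\omega))$ with $Se_t=\chi_{\Delta_t}$; this is immediate since $\sup_t\Vert\chi_{\Delta_t}\Vert=1<\infty$, and the displayed formula for $Sx$ is just the expansion $Sx=\sum_{t\in\dee}x(t)\chi_{\Delta_t}$ evaluated at a point $s$, noting that $s\in\Delta_t$ precisely when $t=s|_n$ for $n$ the length of $t$. Next I would check the factorisation $S=J\sigma_\dee$, where $J\colon C(\overline{\dee})\longrightarrow C(\{0,1\}^\omega)$ is the composition of the restriction $\rho$ to $\partial\dee$ with the isometric isomorphism induced by $\iota$; concretely $(Jf)(s)=f(\iota(s))$ for $f\in C(\overline{\dee})$ and $s\in\{0,1\}^\omega$. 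It suffices to verify this on the generators $e_t$: $J\sigma_\dee e_t=J\chi_{\dee[t\sqsubseteq']}$, and for $s\in\{0,1\}^\omega$ we have $(J\chi_{\dee[t\sqsubseteq']})(s)=\chi_{\dee[t\sqsubseteq']}(\iota(s))$, which equals $1$ exactly when $t\sqsubseteq' \iota(s)$, i.e.\ when $t$ is a finite initial segment of $s$, i.e.\ when $s\in\Delta_t$; so $J\sigma_\dee e_t=\chi_{\Delta_t}=Se_t$, as required.

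With the identity $S=J\sigma_\dee$ in hand the corollary follows formally. If $P\colon X\longrightarrow Y$ is any operator with $Y$ separable and $P^\ast(Y^\ast)$ non-separable, then $P$ is non-Asplund with separable codomain, so by Theorem~\ref{puniuni}(iv) there exist $U\in\allop(\ell_1(\dee),X)$ and $V\in\allop(Y,C(\overline{\dee}))$ with $VPU=\sigma_\dee$; composing on the left with $J$ gives $(JV)PU=J\sigma_\dee=S$, so $S$ factors through $P$. To finish, I would record that $S$ itself lies in the class: its domain $\ell_1(\dee)$ is separable and, by the argument in the proof of Theorem~\ref{puniuni}(iii)--(iv) verbatim (or simply because $S=J\sigma_\dee$ with $J$ an isometry onto its range, hence $J^\ast$ surjective, and $\sigma_\dee^\ast$ has non-separable range), $S^\ast$ has non-separable range; alternatively one computes directly that the evaluation functionals $\{S^\ast\phi_s\mid s\in\{0,1\}^\omega\}$ form an uncountable $1/2$-separated set, exactly as for $\sigma_\ess^\ast$ in the proof of Theorem~\ref{puniuni}. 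Thus $S$ is a universal element of this class.

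I do not expect any serious obstacle here: the entire content is the homeomorphism $\{0,1\}^\omega\cong\partial\dee$ and the clopenness of $\partial\dee$ in $\overline{\dee}$, both of which are already asserted in the paper, plus the routine check on generators that $J\sigma_\dee=S$. The only point demanding a little care is confirming that $\rho\colon C(\overline{\dee})\longrightarrow C(\partial\dee)$ is genuinely onto an isometric copy of $C(\{0,1\}^\omega)$ rather than merely into it — but since $\partial\dee$ is a clopen (hence closed) subset of the compact Hausdorff space $\overline{\dee}$, the Tietze extension theorem guarantees that every continuous function on $\partial\dee$ extends, and the clopenness makes $\rho$ norm-preserving on the subalgebra of functions supported on $\partial\dee$; in any case surjectivity of $\rho$ is not even needed for the factorisation argument, only that $J$ is a bounded operator, which is clear.
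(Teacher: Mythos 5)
Your proof is correct and follows essentially the same route as the paper: the paper likewise factors $S=S_\pi S_0\sigma_\dee$ (your $J$ is exactly $S_\pi S_0$), invokes Theorem~\ref{puniuni}(ii) with $\tee=\dee$, and then checks non-separability of $S^\ast\big(C(\{0,1\}^\omega)^\ast\big)$ via the evaluation functionals. One small inaccuracy: $\partial\dee$ is closed but \emph{not} open in $\overline{\dee}$ (the points of $\dee$ are isolated, so $\dee$ is open and dense), but as you yourself observe, the factorisation only needs $J$ to be a bounded operator — i.e.\ continuity of $\iota$ — so this is harmless.
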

\begin{proof}
We naturally identify $\partial \dee$ with $\{0,1\}^\omega$ by associating to each $s\in \{0,1\}^\omega$ its set of initial segments, $\{ s|_n\mid n<\omega\}$, which belongs to $\partial\dee$. Under this identification we have a homeomorphism, which we denote $\pi$, from $\{0,1\}^\omega$ (with its product topology) onto $\partial\dee$ (with the subspace topology it inherits as a closed subset of $\overline{\dee}=\dee\cup\partial\dee$ with respect to the coarse wedge topology). Let $S_\pi$ denote the isometric isomorphism from $C(\partial\dee)$ onto $C(\{0,1\}^\omega)$ given by setting $S_\pi f=f\circ \pi$ for each $f\in C(\partial\dee)$. Let $S_0$ denote the restriction operator $g\mapsto g|_{\partial\dee}$ from $C(\overline{\dee})$ to $C(\partial\dee)$. A routine calculation shows that $S=S_\pi S_0\sigma_\dee$, which by Theorem~\ref{puniuni}(ii) (with $\tee=\dee$) yields the first assertion of the corollary.

Since the codomain of $S$, namely $C(\{0,1\}^\omega)$, is separable, the second assertion of the corollary will follow once we observe that $S^\ast$ has non-separable range. That this is true follows from a similar argument to that used to produce an uncountable $1/2$-separated set in the proof of Theorem~\ref{puniuni}(iv); we omit the details.
\end{proof}

The following result is a separable codomain version of Stegall's universal operator theorem (Theorem~\ref{thewox} above). Here, as previously in the current paper, $(e_m)_{m=1}^\infty$ denotes the standard unit vector basis of $\ell_1$ and $(h_m)_{m=0}^\infty$ denotes the Haar basis of $C(\{0,1\}^\omega)$.

\begin{corollary}\label{trumpet}
Let $X$ and $Y$ be Banach spaces such that $Y$ is separable and suppose $T\in\allop(X,Y)$ is such that $T^\ast(Y^\ast)$ is non-separable. Let $J$ denote the unique element of $\allop(\ell_1,C(\{0,1\}^\omega))$ satisfying $Je_m=h_{m-1}$ for all $m\in\nat$. Then $J$ factors through $T$, hence $J$ is universal for the class of operators having separable codomain and adjoint with non-separable range. 
\end{corollary}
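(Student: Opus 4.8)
The plan is to deduce this from Corollary~\ref{otherunivsep} by showing that the Haar operator $J$ and the operator $S:\ell_1(\dee)\longrightarrow C(\{0,1\}^\omega)$ of Corollary~\ref{otherunivsep} factor through one another, so that ``$J$ factors through $T$'' and ``$S$ factors through $T$'' are equivalent statements. Granting this, the first assertion follows immediately: by Corollary~\ref{otherunivsep}, $S$ factors through $T$ (since $T$ has separable codomain and $T^\ast(Y^\ast)$ is non-separable), hence so does $J$. The universality statement then follows because $J$ itself has separable codomain $C(\{0,1\}^\omega)$ and satisfies $J^\ast$ non-separable range --- the latter because $J$ factors $S$, and $S^\ast$ has non-separable range (as noted in the proof of Corollary~\ref{otherunivsep}), so $J^\ast$ cannot have separable range either.

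The core of the argument is therefore the mutual factorization of $J$ and $S$. First I would exhibit a factorization $S = J\Phi$ for a suitable $\Phi\in\allop(\ell_1(\dee),\ell_1)$: recall $Se_t=\chi_{\Delta_t}$ for $t\in\dee$, and that the Haar functions satisfy $h_0=\chi_{\{0,1\}^\omega}=\chi_{\Delta_\emptyset}$ and $h_m=\chi_{\Delta_{{t_m}^\smallfrown 0}}-\chi_{\Delta_{{t_m}^\smallfrown 1}}$. A telescoping/triangular computation expresses each $\chi_{\Delta_t}$ as a finite $\{0,\pm1\}$-combination of the $h_m$ --- indeed, writing $t=(i_1,\dots,i_n)$, one has $\chi_{\Delta_t}=\tfrac{1}{2^n}h_0 + \sum_{k} \pm\tfrac{1}{2^{?}}h_{m_k}$ over the strict initial segments of $t$, with the signs and normalizations dictated by conditions (i)--(ii) defining the enumeration $(t_m)$. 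The coefficient vectors are uniformly $\ell_1$-bounded in $t$ (each has at most $n+1$ nonzero entries, but with geometrically decaying weights, so the $\ell_1$ norm is $O(1)$), so by the standard fact quoted in Section~\ref{backgrounder} there is a bounded operator $\Phi:\ell_1(\dee)\to\ell_1$ with $e_t\mapsto$ (this coefficient vector, reindexed via $h_{m-1}=Je_m$), whence $S=J\Phi$. Conversely, since $\{0,1\}^\omega=\partial\dee$ can be identified with $\{\chi_{\Delta_t}: t\in\dee\}$-structure, each $h_{m-1}$ is literally a difference of two $\chi_\Delta$'s, i.e. $h_{m-1}=S(e_{{t_m}^\smallfrown 0}-e_{{t_m}^\smallfrown 1})$ for $m\geq 1$ and $h_0=Se_\emptyset$; thus $J=S\Psi$ for the obvious bounded $\Psi:\ell_1\to\ell_1(\dee)$, $e_m\mapsto e_{{t_m}^\smallfrown 0}-e_{{t_m}^\smallfrown 1}$ (and $e_1\mapsto e_\emptyset$ with an index shift, matching the convention $Je_m=h_{m-1}$).

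From $S=J\Phi$ and the factorization $S=V'TU'$ given by Corollary~\ref{otherunivsep}, I would like to conclude $J=VTU$; but composing in the wrong order only gives $J\Phi = V'TU'$, which does not immediately solve for $J$. The fix is to run the argument the other way: use $J=S\Psi$ to get $J = (S\Psi) = (V'TU')\Psi = V'T(U'\Psi)$, so $J$ factors through $T$ with $U=U'\Psi$ and $V=V'$. Then, for universality, given any operator $Q:W\to V$ with separable codomain and $Q^\ast$ having non-separable range, Corollary~\ref{otherunivsep} gives that $S$ factors through $Q$, and composing with $\Psi$ as above shows $J$ factors through $Q$; and $J$ itself lies in this class as noted. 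The main obstacle I anticipate is purely bookkeeping: pinning down the exact coefficients in the expansion of $\chi_{\Delta_t}$ in the Haar basis (equivalently, checking $S=J\Phi$) requires care with the enumeration conditions (i)--(ii) and with the normalization of $h_m$, and one must verify the coefficient vectors are genuinely uniformly $\ell_1$-bounded rather than merely finitely supported --- but this is a routine dyadic computation with no conceptual difficulty.
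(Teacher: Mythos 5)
Your proposal is correct, and for the main assertion it is the paper's argument: your $\Psi$ (with $e_1\mapsto e_\emptyset$ and $e_m\mapsto e_{{t_{m-1}}^\smallfrown 0}-e_{{t_{m-1}}^\smallfrown 1}$ for $m>1$) is exactly the operator $U_0$ the paper composes with the factorization $VTU=S$ from Corollary~\ref{otherunivsep} to get $J=VT(UU_0)$; be careful only with the off-by-one in your displayed formula ($h_{m-1}=S(e_{{t_{m-1}}^\smallfrown 0}-e_{{t_{m-1}}^\smallfrown 1})$, not $t_m$), which you do flag. Where you genuinely diverge is in verifying that $J$ itself belongs to the class, i.e.\ that $J^\ast$ has non-separable range: the paper proves this directly by a two-point estimate, showing $\Vert J^\ast\varphi_{s_0}-J^\ast\varphi_{s_1}\Vert\geq 2$ for distinct $s_0,s_1\in\{0,1\}^\omega$ via the Haar function $h_m$ with $s_0\vert_{n'+1}={t_m}^\smallfrown 0$, $s_1\vert_{n'+1}={t_m}^\smallfrown 1$; you instead deduce it from the reverse factorization $S=J\Phi$ together with the non-separability of $S^\ast$'s range. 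Your route is sound --- the expansion $\chi_{\Delta_t}=2^{-n}h_0+\sum_{k=0}^{n-1}(\pm 2^{-(n-k)})h_{m_k}$ over the initial segments of $t$ has coefficient vectors of $\ell_1$-norm exactly $1$, so $\Phi$ is bounded (indeed $\Vert\Phi\Vert=1$) and $S^\ast=\Phi^\ast J^\ast$ forces $J^\ast$ to have non-separable range --- but it costs you the dyadic bookkeeping you defer, whereas the paper's direct evaluation-functional computation is self-contained and shorter. Either way the universality conclusion follows as you state.
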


\begin{proof}
Let $(t_m)_{m=1}^\infty$ be the enumeration of $\dee$ given in the paragraph preceding the statement of Theorem~\ref{thewox} above. By Corollary~\ref{otherunivsep} there exist $U\in\allop(\ell_1(\dee),X)$ and $V\in (Y,C(\{0,1\}^\omega))$ such that $VTUe_t=\chi_{\Delta_t}$ for every $t\in\dee$. Define $U_0\in\allop(\ell_1,\ell_1(\dee))$ by setting $U_0e_1=e_\emptyset$ and $U_0e_m=e_{{t_{m-1}}^\smallfrown0}-e_{{t_{m-1}}^\smallfrown1}$ for $m>1$. We have\[
VTUU_0e_1 = VTUe_\emptyset=\chi_{\{0,1\}^\omega}=h_0
\]
and, for $m>1$,
\[
VTUU_0e_m = VTUe_{{t_{m-1}}^\smallfrown0}-VTUe_{{t_{m-1}}^\smallfrown1}=\chi_{\Delta_{{t_{m-1}}^\smallfrown0}}-\chi_{\Delta_{{t_{m-1}}^\smallfrown1}}=h_{m-1}.
\]
It follows that for such $U_0$ we have $VTUU_0=J$. 

To complete the proof we now show that $J^\ast(C(\{0,1\}^\omega)^\ast)$ is non-separable. For each $s\in\{0,1\}^\omega$ let $\varphi_s$ denote the evaluation functional of $C(\{0,1\}^\omega)$ at $s$. Suppose $s_0$ and $s_1$ are distinct elements of $\{0,1\}^\omega$ and let $n'=\min\{ n<\omega \mid s_0(n)\neq s_1(n)\}$. Without loss of generality, assume $s_0(n')=0$ and $s_1(n')=1$ and let $m$ be the finite ordinal such that $s_0\vert_{n'+1}={t_m}^\smallfrown0$ and $s_1\vert_{n'+1}={t_m}^\smallfrown1$. Then $h_m(s_0)=1$ and $h_m(s_1)=-1$, hence
\[
\Vert J^\ast \varphi_{s_0}-J^\ast \varphi_{s_1}\Vert \geq \langle J^\ast \varphi_{s_0}-J^\ast \varphi_{s_1}, e_{m+1}\rangle =  \langle \varphi_{s_0}, h_m\rangle-\langle \varphi_{s_1}, h_m\rangle=1-(-1)=2.
\]
It follows that $\{ J^\ast\varphi_s\mid s\in\{0,1\}^\omega\}$ is an uncountable, 1-separated subset of $J^\ast(C(\{0,1\}^\omega)^\ast)$. In particular, $J^\ast(C(\{0,1\}^\omega)^\ast)$ is non-separable.
 \end{proof}
 
Note that the composition of the operator $J$ in Corollary~\ref{trumpet} with the formal inclusion operator from $C(\{0,1\}^\omega)$ into $L_\infty(\{0,1\},\mu)$ coincides with Stegall's operator $H$ from Theorem~\ref{thewox}; from this fact it is straightforward to deduce Theorem~\ref{thewox} using Corollary~\ref{trumpet} and the injectivity of $L_\infty(\{0,1\}^\omega,\mu)$.

\section{A characterisation of Banach spaces with non-separable dual}\label{pelchsection}

A well-known result of Pe{\l}czy{\'n}ski asserts that a separable Banach space $X$ contains a subspace isomorphic to $\ell_1$ if and only if there exists a continuous linear surjection of $X$ onto $C(\{0,1\}^\omega)$ (see Theorem~3.4 and the remark on p.242 of \cite{Pelczynski1968}). By the Open Mapping Theorem, this is equivalent to the existence of $T\in\allop(X,C(\{0,1\}^\omega))$ such that $B_{C(\{0,1\}^\omega)}\subseteq T(B_X)$. It was noted by James on p.521 of \cite{James1950} that $X$ has non-separable dual if it contains a subspace isomorphic to $\ell_1$. James later showed that the converse statement does not hold in general for separable $X$ by constructing the James tree space which is separable, contains no copy of $\ell_1$, and has nonseparable dual \cite{James1974}. Thus $X$ having non-separable dual is a strictly weaker property than $X$ having a subspace isomorphic to $\ell_1$. 

Against the background of Pe{\l}czy{\'n}ski's aforementioned characterisation of separable Banach spaces containing a copy of $\ell_1$, E. Bator \cite{Bator1992} asked whether separable Banach spaces having non-separable dual may also be characterised in terms of a property of an operator from $X$ into $C(\{0,1\}^\omega)$. An answer to Bator's question was provided by M. Lopez Pellicer \cite{LopezPellicer1999}, who noted that the following two statements are equivalent for a separable Banach space $X$:
\begin{itemize}
\item[(a)] $X^\ast$ is non-separable; and
\item[(b)] for every $\epsilon>0$ there exists $T\in\allop(X, C(\{0,1\}^\omega))$ such that $\Vert T\Vert \leq 1+\epsilon$ and $\{ \chi_{\Delta_t}\mid t\in\dee \}\subseteq T(B_X)+\epsilon B_{C(\{0,1\}^\omega)}$.
\end{itemize}
The key observation in establishing the equivalence of (a) and (b) above is to note the implication (a)$\Rightarrow$(b) follows from an application of Theorem~\ref{doodah} with $\epsilon$ chosen suitably small. The following result of the current paper offers a stronger answer to Bator's question.

\begin{theorem}\label{weakerbeaker}
Let $X$ be a separable Banach space. The following are equivalent:
\begin{itemize}
\item[(i)] $X^\ast$ is non-separable.
\item[(ii)] For every $\epsilon>0$ there exists $T\in\allop(X,C(\{0,1\}^\omega))$ such that $\Vert T\Vert \leq 1+\epsilon$ and $\{ \chi_{\Delta_t}\mid t\in\dee \}\subseteq T(B_X)$.
\item[(iii)] For every $\epsilon>0$ there exists $R\in\allop(X,C(\{0,1\}^\omega))$ such that $\Vert R\Vert \leq 2+\epsilon$ and $(h_m)_{m=0}^\infty\subseteq R(B_X)$.
\end{itemize}
\end{theorem}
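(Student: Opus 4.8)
The plan is to prove the three implications (i)$\Rightarrow$(ii)$\Rightarrow$(iii)$\Rightarrow$(i), with the bulk of the work going into the first implication, which I will obtain essentially for free from Theorem~\ref{scoopie}. The reverse implication (iii)$\Rightarrow$(i) is the easiest: if $R\in\allop(X,C(\{0,1\}^\omega))$ has $(h_m)_{m=0}^\infty\subseteq R(B_X)$, then since the Haar system is a (monotone) basis of $C(\{0,1\}^\omega)$, the range of $R$ is dense, so $R^\ast$ is injective and weak$^\ast$-to-weak$^\ast$ continuous; composing $R$ with Stegall's operator $H$ (or arguing directly as in the proof of Corollary~\ref{trumpet}) one produces an uncountable norm-separated family in $R^\ast(C(\{0,1\}^\omega)^\ast)\subseteq X^\ast$, using the evaluation functionals $\varphi_s$, $s\in\{0,1\}^\omega$, and the fact that for distinct $s_0,s_1$ one has $h_m(s_0)\neq h_m(s_1)$ for the appropriate $m$; hence $X^\ast$ is non-separable. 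In fact, even more simply, one may note that $R(B_X)$ contains all $h_m$, so it contains (a multiple of) every finitely supported block, whence $R$ is close to a surjection onto $C(\{0,1\}^\omega)$ and $R^\ast$ is an into-isomorphism on a non-separable subspace; I would present whichever of these two arguments is cleanest.

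For (i)$\Rightarrow$(ii): assuming $X^\ast$ non-separable, fix $\epsilon>0$, choose $\delta>0$ with $(1+\delta)^2\le 1+\epsilon$ (the precise bookkeeping to be fixed at the end), and apply Theorem~\ref{scoopie} with $W=Z=X$, $R=Id_X$, $K=B_{X^\ast}$, the tree $\tee=\dee$, a compatible enumeration $\tau$ of $\dee$, a suitably small separation constant $\epsilon'>0$ (so that $B_{X^\ast}$ has an uncountable $\epsilon'$-separated subset), and with $\theta$ chosen so that $\epsilon'/(2+\theta)$ comes out to any desired positive value. This yields $(w_t)_{t\in\dee}\subseteq S_X$ and $(z_t^\ast)_{t\in\overline{\dee}}\subseteq B_{X^\ast}$ with the biorthogonality~(\ref{biorslogger}) and with $t\mapsto z_t^\ast$ coarse-wedge-to-weak$^\ast$ continuous. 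As observed in the paper right after the statement of Theorem~\ref{scoopie}, $\partial\dee$ is closed in $\overline{\dee}$ and homeomorphic to $\{0,1\}^\omega$ via $\pi\colon s\mapsto\{s|_n\mid n<\omega\}$. Define $T\colon X\to C(\{0,1\}^\omega)$ by $(Tx)(s)=\langle z^\ast_{\pi(s)},x\rangle$; continuity of $\Xi$ restricted to $\partial\dee$ makes $Tx$ continuous, and $\|T\|\le\sup_{t}\|z^\ast_t\|\le 1$. Now I must produce, for each $t_0\in\dee$, an element $x\in B_X$ with $Tx=\chi_{\Delta_{t_0}}$: for $s\in\{0,1\}^\omega$ we have $s\in\Delta_{t_0}\iff t_0\sqsubseteq'\pi(s)$, so by~(\ref{biorslogger}) the vector $w_{t_0}$ already satisfies $\langle z^\ast_{\pi(s)},w_{t_0}\rangle = \langle z^\ast_{t_0},w_{t_0}\rangle$ if $s\in\Delta_{t_0}$ and $=0$ otherwise; hence $x:=w_{t_0}/\langle z^\ast_{t_0},w_{t_0}\rangle$ gives $Tx=\chi_{\Delta_{t_0}}$ exactly, and $\|x\|=1/\langle z^\ast_{t_0},w_{t_0}\rangle$. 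This is where the main obstacle lies: the scalar $\langle z^\ast_{t_0},w_{t_0}\rangle$ can be as small as $\epsilon'/(2+\theta)$, so $\|x\|$ need not be $\le 1+\epsilon$. The fix is to \emph{rescale}: apply Theorem~\ref{scoopie} instead with $K=M\cdot B_{X^\ast}$ for a large $M$ (this $K$ is still absolutely convex and weak$^\ast$-compact, and $R^\ast K=K$ still contains an uncountable $\epsilon'M$-separated set), obtaining $z^\ast_t\in M B_{X^\ast}$ and $\langle z^\ast_{t_0},w_{t_0}\rangle>\epsilon'M/(2+\theta)$; then $T$ as defined has $\|T\|\le M$ and the representing vector has norm $<(2+\theta)/(\epsilon' M)$. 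Choosing $M$ and $\theta$ appropriately — e.g. so that $\|T\|\le 1+\epsilon$ forces $M\le1+\epsilon$, and then using that $\epsilon'$ may be taken close to $M$ since $MB_{X^\ast}$ contains $\epsilon'$-separated uncountable sets for any $\epsilon'<$ (the norm-density character threshold of $MB_{X^\ast}$), which is $M$ times that of $B_{X^\ast}$ — one arranges $(2+\theta)/(\epsilon' M)\le 1+\epsilon$. I would carry out this constant-juggling carefully; morally it works because the only genuine loss is the factor $2+\theta\to 2$ from Lemma~\ref{lowerlemma}, and that loss is absorbed once we are allowed $\|T\|$ slightly above $1$ and separation slightly below the norm of $K$. (If the bookkeeping is stubborn, an alternative is to renormalise the $w_t$'s: replace $w_{t_0}$ by $\lambda_{t_0}w_{t_0}$ with $\lambda_{t_0}=1/\langle z^\ast_{t_0},w_{t_0}\rangle\le(2+\theta)/\epsilon'$ bounded, absorb the bound into $K$.)

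For (ii)$\Rightarrow$(iii): given $T$ as in (ii) with $\|T\|\le1+\epsilon'$ and $\{\chi_{\Delta_t}\mid t\in\dee\}\subseteq T(B_X)$, build $R$ by pulling back the Haar functions. Since $h_0=\chi_{\{0,1\}^\omega}=\chi_{\Delta_\emptyset}$ and, for $m\ge1$, $h_m=\chi_{\Delta_{{t_m}^\smallfrown 0}}-\chi_{\Delta_{{t_m}^\smallfrown 1}}$, each $h_m$ is a difference of two of the available $\chi_{\Delta_t}$; choose $u_m\in B_X$ with $Tu_m=\chi_{\Delta_{{t_m}^\smallfrown 0}}$ and $v_m\in B_X$ with $Tv_m=\chi_{\Delta_{{t_m}^\smallfrown 1}}$, so $T(\tfrac{u_m-v_m}{2})=\tfrac12 h_m$. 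Letting $(x_m)_{m=0}^\infty\subseteq X$ be the resulting bounded sequence of representing vectors (with $\|x_m\|\le1$ for $h_0$ and $\|x_m\|\le1$ for $m\ge1$ after the halving, hence $\sup_m\|x_m\|\le1$), and noting $\sup_m\|\tfrac12 h_m\|_\infty\le\tfrac12$, but we want $(h_m)\subseteq R(B_X)$ exactly, take $R=2T$: then $\|R\|\le 2(1+\epsilon')\le 2+\epsilon$ for $\epsilon'$ small, and $R(x_m')=h_m$ where $x_0'$ represents $h_0$ under $T$ (so $Rx_0'=2\chi_{\Delta_\emptyset}$ — careful: need $\|x_0'\|\le\tfrac12$, i.e. represent $\tfrac12 h_0$; fine since $\tfrac12\chi_{\Delta_\emptyset}\in T(B_X)$ as $\chi_{\Delta_\emptyset}\in T(B_X)$ implies $\tfrac12\chi_{\Delta_\emptyset}\in T(\tfrac12 B_X)\subseteq T(B_X)$) and for $m\ge1$, $x_m'=\tfrac12(u_m-v_m)\in B_X$ with $Rx_m'=h_m$. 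Thus $(h_m)_{m=0}^\infty\subseteq R(B_X)$ and $\|R\|\le2+\epsilon$, giving (iii). Finally I would collect the $\epsilon$-bookkeeping from all three steps into a single uniform statement; no step other than the rescaling in (i)$\Rightarrow$(ii) presents any real difficulty.
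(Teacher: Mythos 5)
Your implications (ii)$\Rightarrow$(iii) (take $R=2T$ and write each Haar function as a difference of two $\chi_{\Delta_t}$'s) and (iii)$\Rightarrow$(i) (push the evaluation functionals $\varphi_s$ through $R^\ast$ and use $h_m(s_0)-h_m(s_1)=2$ to get an uncountable separated set) are correct and are essentially what the paper does. The gap is in (i)$\Rightarrow$(ii), and it is exactly the difficulty you half-identify and then try to dissolve by rescaling. What condition (ii) demands is a family $(x_t^\ast)_{t}$ with $\sup_t\Vert x_t^\ast\Vert\leq 1+\epsilon$ together with representing vectors $x_t\in B_X$ satisfying $\langle x_t^\ast,x_t\rangle=1$; in other words a \emph{nearly norming} biorthogonal-type system, with $\Vert x_t^\ast\Vert\,\Vert x_t\Vert/\langle x_t^\ast,x_t\rangle\leq 1+\epsilon$. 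Theorem~\ref{scoopie} cannot deliver this: it gives $\Vert w_t\Vert=1$, $z_t^\ast\in K$ and only the lower bound $\langle z_t^\ast,w_t\rangle>\epsilon'/(2+\theta)$, where $\epsilon'$ is a separation constant for a subset of $K$ and hence satisfies $\epsilon'\leq\mathrm{diam}(K)=2\sup_{z^\ast\in K}\Vert z^\ast\Vert$. The resulting guarantee on the crucial ratio is therefore no better than $(2+\theta)\sup_K\Vert\cdot\Vert/\epsilon'\geq(2+\theta)/2>1$, and this quantity is \emph{scale-invariant}: replacing $K$ by $MB_{X^\ast}$ multiplies both $\sup_K\Vert\cdot\Vert$ (hence $\Vert T\Vert$) and the attainable separation $\epsilon'$ by the same factor $M$, so nothing is gained. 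Your bookkeeping only closes if $B_{X^\ast}$ contains uncountable sets that are $(2-\eta)$-separated for all $\eta>0$, i.e.\ separated by essentially the full diameter of the ball; Stegall's lemma guarantees separation constants up to $1$, not $2$, and there is no reason a general separable $X$ with non-separable dual should satisfy the stronger property. For small $\epsilon$ your inequality $(2+\theta)/(\epsilon'M)\leq 1+\epsilon$ is simply unachievable (and even it would only put the representing vectors in $(1+\epsilon)B_X$, not $B_X$). The fallback you mention, rescaling the $w_t$'s and ``absorbing the bound into $K$,'' is circular for the same reason.

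The paper is explicit about this: it notes that Theorem~\ref{scoopie} yields the equivalence only \emph{with the norm estimates deleted}, and it introduces a separate Lemma~\ref{amalgam} to get the sharp bound. That lemma starts from Stegall's Lemma~\ref{ohman}, which produces an uncountable system $(y_\alpha)\subseteq S_Y$, $(y_\alpha^\ast)\subseteq(1+\epsilon)B_{Y^\ast}$ with \emph{exact} values $\langle y_\beta^\ast,y_\alpha\rangle\in\{0,1\}$ --- this is where the near-norming property comes from --- and then reruns the perturbation/induction machinery from the proof of Theorem~\ref{scoopie} (condensation points, the metric $d$, the telescoping correction defining $x_{\tau(k+1)}^\ast$) to organise that system along $\overline{\dee}$ with exact biorthogonality $\langle x_t^\ast,x_s\rangle\in\{0,1\}$ and coarse-wedge-to-weak$^\ast$ continuity, all inside $(1+\varrho)B_{X^\ast}$ and $(1+\varrho)B_X$. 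Your construction of $T$ from such a family via $\pi$ and the restriction to $\partial\dee$ is then exactly right; what is missing is the input family itself, and it cannot be extracted from Theorem~\ref{scoopie} alone.
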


The equivalence of statements (i) and (iii) in Theorem~\ref{weakerbeaker} may be obtained by choosing $\epsilon>0$ suitably small and applying Theorem~\ref{doodah} and the Paley-Wiener stability theorem (see, e.g., Theorem~1.3.9 of \cite{Albiac2006}). Moreover, we may further obtain the equivalence of (i), (ii) and (iii) from Theorem~\ref{scoopie} if we omit the estimates $\Vert T\Vert \leq 1+\epsilon$ and $\Vert R\Vert \leq 2+\epsilon$ from (ii) and (iii), respectively (note that in any case we have $\Vert T\Vert \geq 1$ and $\Vert R\Vert \geq 1$). To obtain the full strength of Theorem~\ref{weakerbeaker} as stated we use instead the following lemma; our notation is as per previous sections of the current paper.
\begin{lemma}\label{amalgam}
Let $X$ be a separable Banach space such that $X^\ast$ is non-separable, let $\tee$ be a downwards-closed subtree of $\Omega$, and let $\varrho\in(0,1)$. Then there exist families $(x_t)_{t\in\tee}\subseteq (1+\varrho)B_X$ and $(x_t^\ast)_{t\in\overline{\tee}}\subseteq (1+\varrho)B_{X^\ast}$ such that
\begin{equation}\label{poweroftwo}
\langle x_t^\ast, x_s\rangle = \begin{cases}
1,& s\sqsubseteq' t\\
0,& s\not\sqsubseteq' t
\end{cases}, \qquad s\in\tee,\,t\in\overline{\tee}
\end{equation}
and the map $t\mapsto x_t^\ast$ from $\overline{\tee}$ to $(1+\varrho)B_{X^\ast}$ is coarse-wedge-to-weak$^\ast$ continuous.
\end{lemma}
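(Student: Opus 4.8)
The plan is to re-run the inductive construction in the proof of Theorem~\ref{scoopie} (applied with $W=Z=X$, $R=Id_X$, $K=B_{X^\ast}$), modifying it so that the diagonal values $\langle f_s^\ast,w_s\rangle$ all equal one fixed constant $d$ close to $1$; the families required by the lemma will then be $x_s:=w_s$ and $x_t^\ast:=d^{-1}f_t^\ast$. Fix $\nu>0$ small (in terms of $\varrho$). Since $X$ is separable and $X^\ast$ non-separable, one first chooses the uncountable separated seed-set of the construction to be a subset $L'\subseteq S_{X^\ast}$ which is, moreover, in \emph{general position}: every member of $L'$ is at norm-distance $\ge1-\nu$ from the closed linear span of any finite subfamily of $L'\setminus\{\text{that member}\}$. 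One then carries out the induction essentially verbatim, with three adjustments: (a) property (iv) of Theorem~\ref{scoopie} is not needed, so one drops the net $G$ and the functionals $h_g^\ast$, leaving $F=\spn\{f^\ast_{\tau(j)}:j\le k\}$ at stage $k+1$ and requiring $w_{\tau(k+1)}$ only to annihilate these $k+1$ functionals; (b) one does not carry the accumulated perturbation forward — at stage $k+1$ one takes $u^\ast:=z_2^\ast$ rather than $v^\ast+z_2^\ast$ — so that $\|f^\ast_{\tau(m)}-\ell_m\|\le\nu 2^{-m}$, where $\ell_m\in L$ is the seed-functional used at stage $m$ (this changes the estimates (I)--(II) only summably, so they persist); (c) one adds to the weak$^\ast$-requirements on $z_2^\ast\in L$ the condition $\operatorname{dist}(z_2^\ast,F)>1-C_0\nu$ for a fixed absolute constant $C_0$, which is achievable uniformly in $k$ thanks to the general position of $L'$ together with the summable perturbations of (b). Set $d:=1-(C_0+1)\nu$. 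Since $\operatorname{dist}(z_2^\ast,F)>d$ and $w\mapsto\langle z_2^\ast,w\rangle$ is continuous on the connected sphere $S_{F_\perp}$ with range $[-\operatorname{dist}(z_2^\ast,F),\operatorname{dist}(z_2^\ast,F)]$, the intermediate value theorem lets one choose, in place of the appeal to Lemma~\ref{lowerlemma}, a vector $w_{\tau(k+1)}\in S_{F_\perp}$ with $\langle z_2^\ast,w_{\tau(k+1)}\rangle=d$; defining $f^\ast_{\tau(k+1)}$ by the usual correction formula (now with $u^\ast=z_2^\ast$), whose corrections lie in $F$ and hence vanish on $F_\perp$, one gets $\langle f^\ast_{\tau(k+1)},w_{\tau(k+1)}\rangle=\langle z_2^\ast,w_{\tau(k+1)}\rangle=d$ exactly. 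One keeps the control $d(f^\ast_{\tau(m)},f^\ast_{\tau(m)^-})<2^{-m}$, so $f_b^\ast:=w^\ast\text{-}\lim_n f^\ast_{b_n}$ exists for each $b\in\partial\tee$ and $t\mapsto f_t^\ast$ is coarse-wedge-to-weak$^\ast$ continuous on $\overline\tee$, exactly as in Theorem~\ref{scoopie}(ii).

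From such families the lemma reads off at once. Put $x_s:=w_s$ for $s\in\tee$ (so $\|x_s\|=1\le1+\varrho$) and $x_t^\ast:=d^{-1}f_t^\ast$ for $t\in\overline\tee$. The analogue of property (III) of Theorem~\ref{scoopie} gives $\langle f_t^\ast,w_s\rangle=\langle f_s^\ast,w_s\rangle=d$ when $s\sqsubseteq' t$ and $\langle f_t^\ast,w_s\rangle=0$ when $s\not\sqsubseteq' t$, whence $\langle x_t^\ast,x_s\rangle$ is $1$ if $s\sqsubseteq' t$ and $0$ otherwise, which is (\ref{poweroftwo}). Moreover $\|x_t^\ast\|=d^{-1}\|f_t^\ast\|\le(1+\nu)/\bigl(1-(C_0+1)\nu\bigr)\le1+\varrho$ for $\nu$ small enough, and $t\mapsto x_t^\ast$ is coarse-wedge-to-weak$^\ast$ continuous because it is a fixed scalar multiple of $t\mapsto f_t^\ast$. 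This proves the lemma granted the modified construction.

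The substance is arranging the constant diagonal $d$ close to $1$, and the only genuinely new ingredient is the general-position seed-set: $w_{\tau(k+1)}$ must annihilate the whole span of the previously chosen functionals (to make the vanishing half of (\ref{poweroftwo}) hold) yet achieve the value $d\approx1$ on $f^\ast_{\tau(k+1)}\approx z_2^\ast$, which is possible only if $z_2^\ast$ is nearly $1$-far in norm from that span, a property one must build into $L'$ from the outset. So the main obstacle reduces to: \emph{a separable Banach space $X$ with non-separable dual admits, for every $\eta>0$, an uncountable subset of $S_{X^\ast}$ each member of which is at norm-distance $\ge1-\eta$ from the linear span of any finite subfamily of the others.} This should follow from the Szlenk-index techniques of \cite{PAHBSzlenkLarge} underlying Theorem~\ref{scoopie}, or from the failure of the Radon--Nikod\'ym property of $X^\ast$ (via a standard bush/separated-tree extraction); an alternative route, sidestepping general position entirely, is to begin from Stegall's Theorem~\ref{doodah} — which already supplies an approximate such system with summably small errors on the dyadic tree — transport it to an arbitrary downwards-closed $\tee\subseteq\Omega$ by an order-embedding of $\Omega$ into $\dee$, and then pass to the exact system by a Paley--Wiener-type stability argument (supplemented by a Hahn--Banach selection of the node functionals $x_t^\ast$, $t\in\tee$).
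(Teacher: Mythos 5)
Your overall architecture is close in spirit to the paper's, but the proposal has a genuine gap exactly where you flag it: the existence of an uncountable ``general position'' subset of $S_{X^\ast}$ is the entire new difficulty of the lemma, and you do not prove it --- you offer three possible routes without carrying any of them out. The paper closes this gap by invoking Stegall's Lemma~1 of \cite{Stegall1975} (quoted as Lemma~\ref{ohman}) applied to the non-separable space $Y=X^\ast$: this yields a transfinite \emph{upper-triangular} biorthogonal system $(z_\alpha^\ast)_{\alpha<\omega_1}\subseteq(1+\eta)B_{X^\ast}$, $(z_\alpha^{\ast\ast})_{\alpha<\omega_1}\subseteq B_{X^{\ast\ast}}$ with $\langle z_\beta^{\ast\ast},z_\alpha^\ast\rangle=1$ for $\alpha=\beta$ and $=0$ for $\alpha<\beta$, which is precisely the quantitative separation-from-spans property you need (for finite subfamilies of smaller index, which suffices since the seeds are consumed in increasing ordinal order). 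Moreover, the paper does not obtain the constant diagonal by your intermediate-value selection of $w_{\tau(k+1)}\in S_{F_\perp}$; it keeps every correction inside $\spn\{z_{\alpha_j}^\ast\mid 0\leq j\leq k\}$ (its condition (A)), notes that $z_{\alpha_{k+1}}^{\ast\ast}$ annihilates that span exactly while taking the value $1$ at $z_{\alpha_{k+1}}^\ast$, and then uses Helly's theorem to replace $z_{\alpha_{k+1}}^{\ast\ast}$ by an element $x_{\tau(k+1)}\in(1+\eta)B_X$ realising the finitely many prescribed values. This sidesteps the two delicate points in your sketch: (1) whether the distance from $z_2^\ast$ to the span of the \emph{perturbed} functionals $f_{\tau(j)}^\ast$ (rather than the seeds) stays uniformly close to $1$ --- distance to a span is not stable under perturbing the spanning vectors unless the perturbations are confined to a fixed subspace, which is what (A) arranges; and (2) the compatibility of ``norm-far from $F$'' with ``weak$^\ast$-close to $z_1^\ast$'', which the paper secures by first passing to the weak$^\ast$-condensation points of the Stegall system.

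A smaller issue: your fallback route via Theorem~\ref{doodah} plus a Paley--Wiener stability argument is essentially the argument the paper explicitly identifies as insufficient for the norm control $1+\varrho$ (see the discussion following the statement of Theorem~\ref{weakerbeaker}); and an order-embedding of $\Omega$ into $\dee$ cannot be made compatible with branches and with the coarse wedge topology at nodes of $\Omega$ having infinitely many immediate successors, so the transport step would itself require a genuine argument. In short, the reduction you describe is reasonable and the rescaling endgame is fine, but the substance of the lemma lives in the step you left open, and the paper's treatment of that step (Stegall's Lemma~1 together with the Helly and condensation-point mechanism) is materially different from what you propose.
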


We defer the proof of Lemma~\ref{amalgam} until after the proof of Theorem~\ref{weakerbeaker}.

\begin{proof}[Proof of Theorem~\ref{weakerbeaker}]
We will show that (i)$\Rightarrow$(ii)$\Rightarrow$(iii)$\Rightarrow$(i).

Suppose that (i) holds. Fix $\epsilon>0$ and let $\varrho\in(0,1)$ be small enough that $(1+\varrho)^2\leq1+\epsilon$. An application of Lemma~\ref{amalgam} with $\tee=\dee$ yields families $(x_t)_{t\in\dee}\subseteq (1+\varrho)B_X$ and $(x_t^\ast)_{t\in\overline{\dee}}\subseteq (1+\varrho)B_{X^\ast}$ such that
\begin{equation}\label{towerofpoo}
\langle x_t^\ast, x_s\rangle = \begin{cases}
1,& s\sqsubseteq' t\\
0,& s\not\sqsubseteq' t
\end{cases} ,\quad s\in\dee,\,t\in\overline{\dee},
\end{equation}
and the map $t\mapsto x_t^\ast$ from $\overline{\dee}$ to $(1+\varrho)B_{X^\ast}$ is coarse-wedge-to-weak$^\ast$ continuous. Define $T_0:X\longrightarrow C(\overline{\dee})$ by setting \[
(T_0x)(t) = (1+\varrho)\langle x_t^\ast,x\rangle
\]
for each $x\in X$ and $t\in\overline{\dee}$. Let $S_0$ and $S_\pi$ be as in the proof of Corollary~\ref{otherunivsep}, so that $\Vert S_0\Vert =1=\Vert S_\pi\Vert$ and $\Vert T_0\Vert \leq (1+\varrho)^2\leq1+\epsilon$. Define $T=S_\pi S_0T_0$, noting that $\Vert T\Vert\leq \Vert T_0\Vert\leq1+\epsilon$. For each $t\in\dee$ we have $(1+\varrho)^{-1}x_t \in B_X$ and, by (\ref{towerofpoo}), \[T\big((1+\varrho)^{-1}x_t\big)=\chi_{\Delta_t}\in B_{C(\{0,1\}^\omega)},\] hence (ii) holds. That is, (i)$\Rightarrow$(ii).

We now show that (ii)$\Rightarrow$(iii). Suppose (ii) holds and fix $\epsilon>0$. Let $T\in\allop(X,C(\{0,1\}^\omega))$ be such that $\Vert T\Vert \leq 1+\frac{\epsilon}{2}$ and $\{ \chi_{\Delta_t}\mid t\in\dee \}\subseteq T(B_X)$. Since each element of the Haar basis is the difference of two elements of $\{ \chi_{\Delta_t}\mid t\in\dee \}$, we see that (iii) holds by taking $R=2T$.

Finally, suppose that (iii) holds and let $R\in\allop(X,C(\{0,1\}^\omega))$ be such that $(h_m)_{m=0}^\infty\subseteq R(B_X)$. For each $m<\omega$ let $x_m\in B_X$ be such that $Rx_m=h_m$. Let $U$ be the (unique) element of $\allop(\ell_1,X)$ satisfying $Ue_m=x_{m-1}$ for each $m\in\nat$. Then $RU=J$, where $J$ is the operator defined in the statement of Corollary~\ref{trumpet}. As $J^\ast$ has non-separable range (as shown in the proof of Corollary~\ref{trumpet}) it follows that $X^\ast$ is non-separable since $J^\ast$ factors through $X^\ast$.
\end{proof}

The proof of Lemma~\ref{amalgam} combines the perturbative techniques from the proof of Theorem~\ref{scoopie} with an application of the following lemma due to Stegall.

\begin{lemma}[{\cite[Lemma~1]{Stegall1975}}]\label{ohman}
Let $Y$ be a non-separable Banach space and $\epsilon>0$. Then there exist families $(y_\alpha)_{\alpha<\omega_1}\subseteq S_Y$ and $(y_\alpha^\ast)_{\alpha<\omega_1}\subseteq (1+\epsilon)B_{Y^\ast}$ such that
\[
\langle y_\beta^\ast,y_\alpha\rangle = \begin{cases}
1,&\alpha=\beta\\
0,&\alpha<\beta
\end{cases}, \quad \alpha,\beta<\omega_1.
\]
\end{lemma}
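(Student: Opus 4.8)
The plan is a transfinite recursion of length $\omega_1$ that exploits a feature of the target family: the required relation $\langle y_\beta^\ast,y_\alpha\rangle=1$ (for $\alpha=\beta$) and $=0$ (for $\alpha<\beta$) constrains $\langle y_\beta^\ast,y_\alpha\rangle$ only for $\alpha\le\beta$, so the ordinals may be processed in increasing order with no constraint imposed retroactively. First I would fix $\epsilon>0$ and a parameter $\theta$ with $\tfrac{1}{1+\epsilon}<\theta<1$. Suppose $\beta<\omega_1$ and that, for every $\alpha<\beta$, a unit vector $y_\alpha\in S_Y$ and a functional $y_\alpha^\ast\in(1+\epsilon)B_{Y^\ast}$ have been constructed so that all the required relations hold among the indices $<\beta$. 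Since $\beta$ is countable, the subspace $Y_\beta:=[\,y_\alpha\mid\alpha<\beta\,]$ is the closed linear hull of countably many vectors, hence separable; as $Y$ is non-separable, $Y_\beta$ is a proper closed subspace of $Y$. (When $\beta=0$ this says $Y_0=\{0\}$, the degenerate case.)

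The key step is then to invoke Riesz's lemma to choose $y_\beta\in S_Y$ with $\operatorname{dist}(y_\beta,Y_\beta)\ge\theta$. On the closed subspace $Y_\beta\oplus\field y_\beta$ (closed since $Y_\beta$ is closed and $\field y_\beta$ is finite-dimensional; a genuine direct sum since $\operatorname{dist}(y_\beta,Y_\beta)>0$) define the linear functional $z+\lambda y_\beta\mapsto\lambda$ for $z\in Y_\beta$, $\lambda\in\field$. Its norm is $1/\operatorname{dist}(y_\beta,Y_\beta)\le 1/\theta<1+\epsilon$, and a Hahn–Banach extension to all of $Y$ preserves this bound, yielding $y_\beta^\ast\in(1+\epsilon)B_{Y^\ast}$ with $\langle y_\beta^\ast,y_\beta\rangle=1$ and $\langle y_\beta^\ast,y_\alpha\rangle=0$ for every $\alpha<\beta$ (because each such $y_\alpha$ lies in $Y_\beta$). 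This completes the recursive step, and with it the construction of $(y_\alpha)_{\alpha<\omega_1}$ and $(y_\alpha^\ast)_{\alpha<\omega_1}$.

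Finally I would observe that the recursion never stalls: at every stage $\beta<\omega_1$ the subspace $Y_\beta$ is separable, hence proper, so a valid choice of $y_\beta$ exists — this is the sole place where non-separability of $Y$ enters. Once the families are in hand, the displayed biorthogonality relation is immediate: for $\alpha=\beta$ it is the normalisation $\langle y_\beta^\ast,y_\beta\rangle=1$, and for $\alpha<\beta$ it is the vanishing of $y_\beta^\ast$ on $Y_\beta\ni y_\alpha$. I do not anticipate a real obstacle; the only points needing a little care are the transfinite bookkeeping (that ``separable'' is preserved under closed span of countably many vectors, so the recursion can be pushed through all of $\omega_1$) and the quantitative use of Riesz's lemma, which is what keeps $\Vert y_\beta^\ast\Vert\le1+\epsilon$ rather than merely finite.
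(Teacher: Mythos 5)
Your argument is correct: the transfinite recursion closes because the required relations only constrain $\langle y_\beta^\ast,y_\alpha\rangle$ for $\alpha\le\beta$, the closed span of countably many vectors is separable and hence proper in a non-separable $Y$, and the Riesz-lemma choice of $y_\beta$ at distance at least $\theta>\frac{1}{1+\epsilon}$ from that span, followed by Hahn--Banach, gives the functional with norm at most $1/\theta<1+\epsilon$. The paper itself gives no proof of this lemma, citing it as Lemma~1 of Stegall's 1975 paper; your argument is the standard one and is essentially Stegall's original proof.
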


\begin{proof}[Proof of Lemma~\ref{amalgam}]
Fix $\varrho\in(0,1)$ and let $\eta=\varrho/2$. Let $\tau:\omega\longrightarrow\tee$ be a bijection such that $\tau(l)\sqsubseteq \tau(m)$ implies $l\leq m$. Let $(z_p)_{p=0}^\infty$ be a norm dense sequence in $S_X$ and let $d$ be the metric on $X^\ast$ given by setting $d(x^\ast,y^\ast)=\sum_{p=0}^\infty 2^{-p-1}\vert\langle x^\ast -y^\ast,z_p\rangle\vert$ for $x^\ast,y^\ast\in X^\ast$. Recall that $d(x^\ast,y^\ast)\leq\Vert x^\ast-y^\ast\Vert$ for all $x^\ast,y^\ast\in X^\ast$ and that on bounded subsets of $X^\ast$ the weak$^\ast$ topology coincides with the topology induced by $d$.

By Lemma~\ref{ohman} there exist families $(z_\alpha^\ast)_{\alpha<\omega_1}\subseteq (1+\eta)B_{X^\ast}$ and $(z_\alpha^{\ast\ast})_{\alpha<\omega_1}\subseteq B_{X^{\ast\ast}}$ such that
\begin{equation}\label{crinemaft}
\langle z_\beta^{\ast\ast},z_\alpha^\ast\rangle = \begin{cases}
1,&\alpha=\beta\\
0,&\alpha<\beta
\end{cases}, \quad \alpha,\beta<\omega_1.
\end{equation}
Since the weak$^\ast$ topology on $(1+\eta)B_{X^\ast}$ is separable and metrisable, we may assume (see, e.g., Section~III.3, Problem~4 of \cite{Dieudonne1960}) that $z_\beta^\ast$ is a weak$^\ast$-condensation point of $\{ z_\alpha^\ast\mid\alpha<\omega_1\}$ in $(1+\eta)B_{X^\ast}$ for every $\beta<\omega_1$. We shall construct by induction on $m<\omega$ a strictly increasing sequence $(\alpha_m)_{m=0}^\infty$ of countable ordinals and sequences $(x_{\tau(m)})_{m=0}^\infty \subseteq (1+\eta)B_X$ and $(x_{\tau(m)}^\ast)_{m=0}^\infty \subseteq (1+2\eta)B_{X^\ast}$ satisfying the following conditions for each $m<\omega$:
\begin{itemize}
\item[(A)] $x_{\tau(m)}^\ast \in z_{\alpha_m}^\ast +(\sum_{j=1}^m2^{-j})\eta B_{\spn\{ z_{\alpha_j}^\ast\mid 0\leq j< m\}}\subseteq (1+2\eta)B_{\spn\{ z_{\alpha_j}^\ast\mid 0\leq j\leq m\}}
$;
\item[(B)] If $m>0$ then $d(x_{\tau(m)}^\ast,x_{\tau(m)^-}^\ast)<2^{-m}$; and
\item[(C)] For all $i,j\leq m$,
 \begin{equation}
\langle x_{\tau(j)}^\ast ,x_{\tau(i)}\rangle=\begin{cases}
1,&\tau(i)\sqsubseteq \tau(j)\\ 0, &\tau(i)\not\sqsubseteq \tau(j)
\end{cases};
\end{equation}
\end{itemize}

Once the existence of the aforementioned sequences $(\alpha_m)_{m=0}^\infty$, $(x_{\tau(m)})_{m=0}^\infty $ and $(x_{\tau(m)}^\ast)_{m=0}^\infty $ has been established, the proof of Lemma~\ref{amalgam} proceeds as follows. For $b\in\partial\tee$ and $n<\omega$ let $b_n$ denote the element of $\tee[\sqsubset b_n]$ of order type $n$. Let $x_b^\ast$ denote the weak$^\ast$ limit of the sequence $(x_{b_n}^\ast)_{n=0}^\infty\subseteq (1+2\eta)B_{X^\ast}$, which is Cauchy with respect to $d$ since (B) holds for all $m<\omega$. For $t\in\tee$ and $b\in\partial\tee$ it follows from the fact that (C) holds for all $m<\omega$ that
\[
\langle x_b^\ast, x_t\rangle=\begin{cases}
1,&\mbox{if }t\in b\\
0,&\mbox{if }t\notin b
\end{cases}.
\]
In particular, for $(x_t)_{t\in\tee}\subseteq (1+\eta)B_X$ and $(x_t^\ast)_{t\in\overline{\tee}}\subseteq (1+2\eta)B_{X^\ast}$ as defined above, we have that (\ref{poweroftwo}) holds. Since (B) holds for all $m<\omega$, the argument used to establish the continuity of the map $\Xi$ in the proof of Theorem~\ref{scoopie} shows that the map $t\mapsto x_t^\ast$ from $\overline{\tee}$ to $(1+2\eta)B_{X^\ast}$ is coarse-wedge-to-weak$^\ast$ continuous..

We now complete the proof of the lemma by constructing $(\alpha_m)_{m=0}^\infty$, $(x_{\tau(m)})_{m=0}^\infty $ and $(x_{\tau(m)}^\ast)_{m=0}^\infty $. Let $\alpha_0=0$, $x_{\tau(0)}^\ast = z_0^\ast$ and choose $x_{\tau(0)} \in (1+\eta)B_X$ such that $\langle x_{\tau(0)}^\ast,x_{\tau(0)}\rangle=1$. Clearly conditions (A), (B) and (C) are satisfied for $m=0$.

Fix $k<\omega$ and suppose that $\alpha_m<\omega_1$, $x_{\tau(m)}\in (1+\eta)B_X$ and $x_{\tau(m)}^\ast\in (1+2\eta)B_{X^\ast}$ have been defined for $0\leq m\leq k$ in such a way that (A), (B) and (C) are satisfied for $0\leq m\leq k$. To carry out the inductive step we now show how to construct $\alpha_{k+1}\in (\alpha_k,\omega_1)$, $x_{\tau(k+1)}\in (1+\eta)B_X$ and $x_{\tau(k+1)}^\ast\in (1+2\eta)B_{X^\ast}$ in such a way that (A), (B) and (C) hold for $m=k+1$. To this end let $v^\ast\in(\sum_{j=1}^k2^{-j})\eta B_{\spn\{ z_{\alpha_j}^\ast\mid 0\leq j<k\}}$ be such that $x_{\tau(k+1)^-}^\ast = z_{\alpha_{k'}}^\ast+v^\ast$, where $k'$ denotes the unique finite ordinal such that $\tau(k')= \tau(k+1)^-$. Define
\begin{align*}
\veee_1&:= \bigcap_{i=0}^k\Big\{ x^\ast\in X^\ast \,\,\Big\vert\,\, \vert \langle x^\ast - z_{\alpha_{k'}}^\ast,x_{\tau(i)}\rangle\vert<\frac{\eta}{k2^{k+2}(1+2\eta)} \Big\};\mbox{ and,}\\
\veee_2&:=  \{ x^\ast \in X^\ast \mid d(x^\ast, z_{\alpha_{k'}}^\ast)< 2^{-k-2}\}.
\end{align*}
Since $z_{\alpha_{k'}}^\ast$ is a weak$^\ast$-condensation point of $\{ z_\alpha^\ast\mid \alpha<\omega_1\}$ in $(1+\eta)B_{X^\ast}$ there exists $\alpha_{k+1}>\alpha_k$ such that $z_{\alpha_{k+1}}^\ast\in\veee_1\cap\veee_2\cap(1+\eta)B_{X^\ast}$. 

For convenience we let $x_{\tau(0)^-}^\ast$ denote the zero element of $X^\ast$, notwithstanding the fact that $\tau(0)^-$ is undefined. Define \begin{align*} x_{\tau(k+1)}^\ast :&= z_{\alpha_{k+1}}^\ast +v^\ast-\sum_{l=0}^k \langle z_{\alpha_{k+1}}^\ast- z_{\alpha_{k'}}^\ast, x_{\tau(l)}\rangle(x_{\tau(l)}^\ast - x_{\tau(l)^-}^\ast)\\ &=z_{\alpha_{k+1}}^\ast +v^\ast-\sum_{l=0}^k \langle z_{\alpha_{k+1}}^\ast- x_{\tau(k+1)^-}^\ast+v^\ast, x_{\tau(l)}\rangle(x_{\tau(l)}^\ast - x_{\tau(l)^-}^\ast)
,\end{align*} so that
\begin{equation}\label{fanta}
x_{\tau(k+1)}^\ast\in z_{\alpha_{k+1}}^\ast+ \Big[\Big(\sum_{j=1}^k2^{-j} \Big)\eta+c \Big]B_{\spn\{ z_{\alpha_j}^\ast\mid 0\leq j\leq k\}}, 
\end{equation}
where $c$ is a scalar that, since $z_{\alpha_{k+1}}^\ast \in \veee_1$, may be taken to satisfy
\begin{align}
c\leq k\frac{\eta}{k2^{k+2}(1+2\eta)}2(1+2\eta) &=2^{-k-1}\eta\label{pepsi} \\&<2^{-k-2}\label{coke}
\end{align}
It follows from (\ref{fanta}) and the estimate of $c$ provided at (\ref{pepsi}) that (A) holds for $m=k+1$.

The estimate for $c$ provided at (\ref{coke}) implies that $\Vert x_{\tau(k+1)}^\ast -z_{\alpha_{k+1}}^\ast-v^\ast\Vert <2^{-k-2}$, hence, since $z_{\alpha_{k+1}}^\ast\in\veee_2$, we have
\begin{align*}
d(x_{\tau(k+1)}^\ast,x_{\tau(k+1)^-}^\ast)&\leq d(x_{\tau(k+1)}^\ast,z_{\alpha_{k+1}}^\ast +v^\ast)+ d(z_{\alpha_{k+1}}^\ast +v^\ast,x_{\tau(k+1)^-}^\ast)\\
&\leq \Vert x_{\tau(k+1)}^\ast-z_{\alpha_{k+1}}^\ast -v^\ast\Vert + d(z_{\alpha_{k+1}}^\ast ,x_{\tau(k+1)^-}^\ast-v^\ast)\\
&<2^{-k-2}+ d(z_{\alpha_{k+1}}^\ast ,z_{\alpha_{k'}}^\ast)\\
&<2^{-k-2}+2^{-k-2}\\
&=2^{-k-1}.
\end{align*}
In particular, (B) holds for $m=k+1$.

To complete the induction it remains to define $x_{\tau(k+1)}$ and then verify that (C) holds for $m=k+1$. To this end apply Helly's theorem to obtain $x_{\tau(k+1)}\in (1+\eta)B_X$ such that $\langle x_{\tau(j)}^\ast, x_{\tau(k+1)}\rangle = \langle z_{\alpha_{k+1}}^{\ast\ast}, x_{\tau(j)}^\ast\rangle$ for $0\leq j \leq k+1$. Since (A) is assumed to hold for $0\leq m\leq k$, it follows from (\ref{crinemaft}) that for $0\leq j\leq k$ we have $\langle x_{\tau(j)}^\ast, x_{\tau(k+1)}\rangle = \langle z_{\alpha_{k+1}}^{\ast\ast}, x_{\tau(j)}^\ast\rangle= 0$. Moreover, since \[x_{\tau(k+1)}^\ast-z_{\alpha_{k+1}}^\ast\in\spn\{ z_{\alpha_j}^\ast\mid 0\leq j\leq k\}\subseteq \ker(z_{\alpha_{k+1}}^{\ast\ast}),\] it follows from (\ref{crinemaft}) that
\[
\langle x_{\tau(k+1)}^\ast,x_{\tau(k+1)}\rangle= \langle z_{\alpha_{k+1}}^{\ast\ast},x_{\tau(k+1)}^\ast\rangle = \langle z_{\alpha_{k+1}}^{\ast\ast},z_{\alpha_{k+1}}^\ast\rangle+\langle z_{\alpha_{k+1}}^{\ast\ast},x_{\tau(k+1)}^\ast-z_{\alpha_{k+1}}^\ast\rangle = 1.
\]
Since (C) is assumed to hold for $0\leq m\leq k$, if $l,i\in [0,k]$ then\[
\langle x_{\tau(l)}^\ast - x_{\tau(l)^-}^\ast, x_{\tau(i)}\rangle = \begin{cases}
1,&i=l\\
0,&i\neq l
\end{cases}.
\]
It follows that if $0\leq i\leq k$ then
\begin{align*}
\langle x_{\tau(k+1)}^\ast, x_{\tau(i)}\rangle &= \langle z_{\alpha_{k+1}}^\ast,x_{\tau(i)}\rangle+\langle v^\ast, x_{\tau(i)}\rangle -\langle z_{\alpha_{k+1}}^\ast -x_{\tau(k+1)^-}^\ast +v^\ast,x_{\tau(i)}\rangle\\
&= \langle x_{\tau(k+1)^-}^\ast, x_{\tau(i)}\rangle\\
&=\begin{cases}
1,&\tau(i) \sqsubseteq \tau(k+1)\\
0,&\tau(i)\not\sqsubseteq \tau(k+1)
\end{cases}.
\end{align*}
We have now shown that (C) holds for $m=k+1$, so the induction is complete.
\end{proof}

\section*{Acknowledgements}
The author thanks Professor Gilles Godefroy for bringing the reference \cite{Finet1989} to his attention.

\bibliographystyle{acm}

\begin{thebibliography}{31}

\bibitem{Albiac2006}
{\sc Albiac, F., and Kalton, N.~J.}
\newblock {\em Topics in {B}anach space theory}, vol.~233 of {\em Graduate
  Texts in Mathematics}.
\newblock Springer, New York, 2006.

\bibitem{Bator1992}
{\sc Bator, E.~M.}
\newblock A basic construction in duals of separable {B}anach spaces.
\newblock {\em Rocky Mountain J. Math. 22}, 1 (1992), 81--92.

\bibitem{PAHBSzlenkLarge}
{\sc Brooker, P.~A.~H.}
\newblock {A}bsolutely convex sets of large {S}zlenk index.
\newblock {\em Submitted\/}.

\bibitem{Causey2016}
{\sc Causey, R.}.
\newblock {\em ({P}rivate communication)}.

\bibitem{Cilia2015}
{\sc Cilia, R., and Guti{\'e}rrez, J.~M.}
\newblock Nonexistence of certain universal polynomials between {B}anach
  spaces.
\newblock {\em J. Math. Anal. Appl. 427}, 2 (2015), 962--976.

\bibitem{Diestel2001}
{\sc Diestel, J., Jarchow, H., and Pietsch, A.}
\newblock Operator ideals.
\newblock In {\em Handbook of the geometry of Banach spaces, Vol. I}.
  North-Holland, Amsterdam, 2001, pp.~437--496.

\bibitem{Diestel1977}
{\sc Diestel, J., and Uhl, Jr., J.~J.}
\newblock {\em Vector measures}.
\newblock American Mathematical Society, Providence, R.I., 1977.
\newblock With a foreword by B. J. Pettis, Mathematical Surveys, No. 15.

\bibitem{Dieudonne1960}
{\sc Dieudonn{\'e}, J.}
\newblock {\em Foundations of modern analysis}.
\newblock Pure and Applied Mathematics, Vol. X. Academic Press, New
  York-London, 1960.

\bibitem{Dilworth1985}
{\sc Dilworth, S.~J.}
\newblock Universal noncompact operators between super-reflexive {B}anach
  spaces and the existence of a complemented copy of {H}ilbert space.
\newblock {\em Israel J. Math. 52}, 1-2 (1985), 15--27.

\bibitem{Dilworth2016}
{\sc Dilworth, S.~J., Kutzarova, D., Lancien, G., and Randrianarivony, N.~L.}
\newblock Equivalent norms with the property $(\beta)$ of rolewicz.
\newblock {\em Revista de la Real Academia de Ciencias Exactas, F{\'i}sicas y
  Naturales. Serie A. Matem{\'a}ticas\/} (2016), 1--13.

\bibitem{Dodos2011}
{\sc Dodos, P.}
\newblock Operators whose dual has non-separable range.
\newblock {\em Journal of Functional Analysis 260}, 5 (2011), 1285 -- 1303.

\bibitem{Fabian2001}
{\sc Fabian, M., Habala, P., H{\'a}jek, P., Montesinos~Santaluc{\'{\i}}a, V.,
  Pelant, J., and Zizler, V.}
\newblock {\em Functional analysis and infinite-dimensional geometry}.
\newblock CMS Books in Mathematics/Ouvrages de Math\'ematiques de la SMC, 8.
  Springer-Verlag, New York, 2001.

\bibitem{Finet1989}
{\sc Finet, C., and Godefroy, G.}
\newblock Biorthogonal systems and big quotient spaces.
\newblock In {\em Banach space theory ({I}owa {C}ity, {IA}, 1987)}, vol.~85 of
  {\em Contemp. Math.} Amer. Math. Soc., Providence, RI, 1989, pp.~87--110.

\bibitem{Girardi1997}
{\sc Girardi, M., and Johnson, W.~B.}
\newblock Universal non-completely-continuous operators.
\newblock {\em Israel J. Math. 99\/} (1997), 207--219.

\bibitem{Hagler1987}
{\sc Hagler, J.~N.}
\newblock A note on separable {B}anach spaces with nonseparable dual.
\newblock {\em Proc. Amer. Math. Soc. 99}, 3 (1987), 452--454.

\bibitem{Hinrichs2013}
{\sc Hinrichs, A., Novak, E., and Wo{\'z}niakowski, H.}
\newblock Discontinuous information in the worst case and randomized settings.
\newblock {\em Math. Nachr. 286}, 7 (2013), 679--690.

\bibitem{Hinrichs2000}
{\sc Hinrichs, A., and Pietsch, A.}
\newblock The closed ideal of {$(c_0,p,q)$}-summing operators.
\newblock {\em Integral Equations Operator Theory 38}, 3 (2000), 302--316.

\bibitem{James1950}
{\sc James, R.~C.}
\newblock Bases and reflexivity of {B}anach spaces.
\newblock {\em Ann. of Math. (2) 52\/} (1950), 518--527.

\bibitem{James1974}
{\sc James, R.~C.}
\newblock {A separable somewhat reflexive Banach space with nonseparable dual.}
\newblock {\em Bull. Am. Math. Soc. 80\/} (1974), 738--743.

\bibitem{Johnson1971a}
{\sc Johnson, W.~B.}
\newblock {A universal non-compact operator.}
\newblock {\em {Colloq. Math.} 23\/} (1971), 267--268.

\bibitem{Johnson1972}
{\sc Johnson, W.~B., and Rosenthal, H.~P.}
\newblock On {$\omega \sp{\ast} $}-basic sequences and their applications to
  the study of {B}anach spaces.
\newblock {\em Studia Math. 43\/} (1972), 77--92.

\bibitem{Lindenstrauss1968}
{\sc Lindenstrauss, J., and Pe{\l}czy\'nski, A.}
\newblock {Absolutely summing operators in $\mathscr{L}\sb{p}$-spaces and their
  applications.}
\newblock {\em {Stud. Math.} 29\/} (1968), 275--326.

\bibitem{LopezPellicer1999}
{\sc L{\'o}pez~Pellicer, M.}
\newblock A {B}ator's question on dual {B}anach spaces.
\newblock {\em Rev. R. Acad. Cienc. Exactas F\'\i s. Nat. (Esp.) 93}, 2 (1999),
  213--215.

\bibitem{Nyikos1997}
{\sc Nyikos, P.~J.}
\newblock Various topologies on trees.
\newblock In {\em Proceedings of the {T}ennessee {T}opology {C}onference
  ({N}ashville, {TN}, 1996)\/} (1997), World Sci. Publ., River Edge, NJ,
  pp.~167--198.

\bibitem{Oikhberg2016}
{\sc Oikhberg, T.}
\newblock {A note on universal operators.}
\newblock In {\em {Ordered structures and applications. Positivity VII (Zaanen
  centennial conference), Leiden, the Netherlands, July 22--26, 2013.}} Basel:
  Birkh\"auser/Springer, 2016, pp.~339--347.

\bibitem{Pelczynski1968}
{\sc Pe{\l}czy{\'n}ski, A.}
\newblock On {B}anach spaces containing {$L_{1}(\mu )$}.
\newblock {\em Studia Math. 30\/} (1968), 231--246.

\bibitem{Pietsch1980}
{\sc Pietsch, A.}
\newblock {\em Operator ideals}, vol.~20 of {\em North-Holland Mathematical
  Library}.
\newblock North-Holland Publishing Co., Amsterdam, 1980.

\bibitem{Pisier1987}
{\sc Pisier, G., and Xu, Q.~H.}.
\newblock {Random series in the real interpolation spaces
  between the spaces {$v_p$}}.
\newblock In: {\em Geometrical aspects of functional
  analysis (1985/86)}; vol. 1267 of
  Lecture Notes in Math., Springer,
  1987, pp.185--209.

\bibitem{Rosenthal1988}
{\sc Rosenthal, H.}
\newblock Weak{$^*$}-{P}olish {B}anach spaces.
\newblock {\em J. Funct. Anal. 76}, 2 (1988), 267--316.

\bibitem{Rosenthal1972}
{\sc Rosenthal, H.~P.}
\newblock On factors of {$C([0,\,1])$} with non-separable dual.
\newblock In {\em Proceedings of the {I}nternational {S}ymposium on {P}artial
  {D}ifferential {E}quations and the {G}eometry of {N}ormed {L}inear {S}paces
  ({J}erusalem, 1972)\/} (1972), vol.~13, pp.~361--378 (1973); correction,
  ibid. 21 (1975), no. 1, 93--94.

\bibitem{Semadeni1971}
{\sc Semadeni, Z.}
\newblock {\em Banach spaces of continuous functions. {V}ol. {I}}.
\newblock PWN---Polish Scientific Publishers, Warsaw, 1971.
\newblock Monografie Matematyczne, Tom 55.

\bibitem{Stegall1975}
{\sc Stegall, C.}
\newblock {The {R}adon-{N}ikod\'ym property in conjugate Banach spaces.}
\newblock {\em Trans. Amer. Math. Soc. 206\/} (1975), 213--223.

\bibitem{Stegall1981}
{\sc Stegall, C.}
\newblock The {R}adon-{N}ikod\'ym property in conjugate {B}anach spaces. {II}.
\newblock {\em Trans. Amer. Math. Soc. 264}, 2 (1981), 507--519.

\end{thebibliography}

\vspace{4mm}

\noindent philip.a.h.brooker@gmail.com

\end{document}